\newcommand{\BA}{{\mathbb{A}}}
\newcommand{\BG}{{\mathbb{G}}}
\newcommand{\Id}{{\mathop{\operatorname{\rm Id}}}}
\DeclareMathOperator{\psId}{{Ps-Id}}
\DeclareMathOperator{\CT}{{CT}}
\DeclareMathOperator{\enh}{{enh}}
\DeclareMathOperator{\mes}{{mes}}
\newcommand{\secref}[1]{Sect.~\ref{#1}}
\newcommand{\lemref}[1]{Lemma~\ref{#1}}
\newcommand{\propref}[1]{Proposition~\ref{#1}}
\newcommand{\corref}[1]{Corollary~\ref{#1}}
\newcommand{\remref}[1]{Remark~\ref{#1}}
\newcommand{\CY}{{\mathcal{Y}}}
\newcommand{\bA}{{\mathbb A}}
\newcommand{\bC}{{\mathbb C}}
\newcommand{\bD}{{\mathbb D}}
\newcommand{\bF}{{\mathbb F}}
\newcommand{\bG}{{\mathbb G}}
\newcommand{\bQ}{{\mathbb Q}}
\newcommand{\bR}{{\mathbb R}}
\newcommand{\bX}{{\mathbb X}}
\newcommand{\bZ}{{\mathbb Z}}
\newcommand{\cA}{{\mathcal A}}
\newcommand{\cB}{{\mathcal B}}
\newcommand{\cC}{{\mathcal C}}
\newcommand{\cD}{{\mathcal D}}
\newcommand{\cF}{{\mathcal F}}
\newcommand{\cH}{{\mathcal H}}
\newcommand{\cL}{{\mathcal L}}
\newcommand{\cM}{{\mathcal M}}
\newcommand{\cN}{{\mathcal N}}
\newcommand{\cS}{{\mathcal S}}
\newcommand{\cY}{{\mathcal Y}}
\newcommand{\cZ}{{\mathcal Z}}
\newcommand{\sD}{{\mathscr D}}
\newcommand{\fC}{{\mathfrak C}}
\newcommand{\fL}{{\mathfrak L}}
\newcommand{\fR}{{\mathfrak R}}
\newcommand{\fl}{{\mathfrak l}}
\newcommand{\fs}{{\mathfrak s}}
\newcommand{\B}{{\cB}}
\newcommand{\Dmod}{\on{D-mod}}
\newcommand{\nc}{\newcommand}
\nc\wh{\widehat}
\nc\on{\operatorname}
\nc\Gr{\on{Gr}}
\nc\Fl{\on{Fl}}
\newtheorem{cor}[subsubsection]{Corollary}
\newtheorem{lem}[subsubsection]{Lemma}
\newtheorem{prop}[subsubsection]{Proposition}
\newtheorem{conj}[subsubsection]{Conjecture}
\newtheorem{thm}[subsubsection]{Theorem}
\theoremstyle{remark}
\newtheorem{rem}[subsubsection]{Remark}
\newcommand{\C}{{\cC}}
\newcommand{\Cp}{\cC_+}
\newcommand{\Cm}{\cC_-}
\DeclareMathOperator{\Bun}{{Bun}}
 \DeclareMathOperator{\Eis}{{Eis}}
\DeclareMathOperator{\red}{{red}} 
\DeclareMathOperator{\Sym}{{Sym}}
\DeclareMathOperator{\Cone}{{Cone}}
\newcommand{\limto}{{\displaystyle\lim_{\longrightarrow}}}
\newcommand{\rightlim}{\mathop{\limto}}
\newcommand{\leftlim}{\mathop{\displaystyle\lim_{\longleftarrow}}}
\newcommand{\limfromn}{\leftlim\limits_{\raise3pt\hbox{$n$}}}
\newcommand{\limton}{\rightlim\limits_{\raise3pt\hbox{$n$}}}
\newcommand{\epi}{\twoheadrightarrow}
\newcommand{\iso}{\buildrel{\sim}\over{\longrightarrow}}
\newcommand{\mono}{\hookrightarrow}
\DeclareMathOperator{\End}{{End}} \DeclareMathOperator{\Ext}{{Ext}}
\DeclareMathOperator{\Hom}{{Hom}} 
\DeclareMathOperator{\K}{\mathbf{K}} 
\DeclareMathOperator{\Pic}{{Pic}} 
\DeclareMathOperator{\Div}{{Div}}
\DeclareMathOperator{\Ker}{{Ker}} \DeclareMathOperator{\id}{{id}}
\DeclareMathOperator{\im}{{Im}} 
\DeclareMathOperator{\re}{{Re}}
\DeclareMathOperator{\Spec}{{Spec}}
\DeclareMathOperator{\Vect}{{Vect}}
\DeclareMathOperator{\diag}{{diag}}
\DeclareMathOperator{\Supp}{{Supp}}
\DeclareMathOperator{\I}{{I}}
\DeclareMathOperator{\Iw}{{Iw}}
\newcommand{\U}{\fL}
\newcommand{\W}{Q}
\theoremstyle{definition}
\newtheorem{ex}[subsubsection]{Example}
\newtheorem{defin}[subsubsection]{Definition}
\numberwithin{equation}{section}
\newcommand{\ql}{\overline{\bQ}_l}
\newcommand{\sfp}{{\mathsf{p}}}
\newcommand{\sfq}{{\mathsf{q}}}
\newcommand{\dK}{{\omega}}
\begin{document}

\title[A strange bilinear form on the space of automorphic forms]{On a strange invariant bilinear form on the space of automorphic forms}

\author{Vladimir Drinfeld}
\address{V.D.: Department of Mathematics,
University of Chicago, Chicago, IL 60637, USA}
\email{drinfeld@math.uchicago.edu}

\author{Jonathan Wang}
\address{J.W.: Department of Mathematics,
University of Chicago, Chicago, IL 60637, USA}
\email{jpwang@math.uchicago.edu}

\dedicatory{To Joseph Bernstein with deepest admiration}


\begin{abstract}
Let $F$ be a global field and $G:=SL(2)$. We study the bilinear form $\B$ on the space of $K$-finite smooth compactly supported functions on 
$G(\bA )/G(F)$ defined by 
$$\B (f_1,f_2):=\B_{naive}(f_1,f_2)-\langle M^{-1}\CT (f_1)\, ,\CT (f_2)\rangle,$$
where $\B_{naive}$ is the usual scalar product, $\CT$ is the constant term operator, and $M$ is the standard intertwiner. This form is natural from the viewpoint of the geometric Langlands program. To justify this claim, we provide a dictionary between the classical and `geometric' theory of automorphic forms. We also show that the form $\B$ is related to S.~Schieder's Picard-Lefschetz oscillators.
\end{abstract}

\keywords{Automorphic form, Eisenstein series, constant term, trace of Frobenius, functions-sheaves dictionary, geometric Langlands program, miraculous duality, D-modules, DG category}

\subjclass[2010]{Primary 11F70; secondary 20C, 14F}

\maketitle

\section{Introduction}   \label{s:Intro}
\subsection{Some notation}
\subsubsection{}   \label{sss:some not}

Let $G$ denote the algebraic group $SL(2)$. Let $F$ be a global field (i.e., either a number field or a field finitely generated over $\bF_p$ of transcendence degree~1). Let $\bA$ denote the adele ring of $F$.

For any place $v$ of $F$, let $K_v$ denote the standard maximal compact subgroup of $G(F_v)$ (i.e., if $F_v=\bR$ then 
$K_v=SO(2)$, if $F_v=\bC$ then $K_v=SU(2)$, and if $F_v$ is non-Archimedean then $K_v=G(O_v)$, where $O_v\subset F_v$ is the ring of integers). Set $K:=\prod\limits_v K_v\,$; this is a maximal compact subgroup of $G(\bA )$.

\subsubsection{} \label{sss:E}
We fix a field $E$ of characteristic 0; if $F$ is a number field we assume that $E$ equals $\bR$ or $\bC$. Unless specified otherwise, all functions will take values in $E$.

\subsubsection{}   \label{sss:automorphic}
Let $\cA$ denote the space of $K$-finite $C^{\infty}$ functions
on $G(\bA )/G(F )$. (The letter $\cA$ stands for `automorphic'.) Let $\cA_c\subset\cA$ denote  the subspace of compactly supported functions.

\subsubsection{}
Fix a Haar measure on $G(\bA )$. If $f_1,f_2\in\cA$ and at least one of the functions $f_1,f_2$ is in $\cA_c\,$, we set
\begin{equation}  \label{e:Bnaive}
\B_{naive}(f_1,f_2)=\int\limits_{G(\bA)/G(F)}f_1(x)f_2(x)dx\, .
\end{equation}

\subsection{Subject of this article}
In this article we define and study an invariant\footnote{If $F$ is a function field then `invariant' just means invariance with respect to the action of $G(\bA)$. If $F$ is a number field then the notion of invariance is modified in the usual way, see formula \eqref{e:invarince of B} (a~modification is necessary because in this case $G(\bA)$ does not act on $\cA$).} symmetric bilinear form $\B$ on $\cA_c\,$, which is slightly different from $\B_{naive}\,$. (The definition of $\B$ will be given in Subsection~\ref{ss:def of B}). One has
\[
\B (f_1,f_2)=\B_{naive}(Lf_1,f_2),
\]
where $L:\cA_c\to\cA$ is a certain linear operator such that

(i) $Lf=f$ if $f$ is a cusp form;

(ii) the action of $L$ on an Eisenstein series has a nice description, see \propref{p:properties of A}(ii).

Let us note that $\B$ and $\B_{naive}$ slightly depend on the choice of a Haar measure on $G(\bA )$ but $L$ does not.

\begin{rem}
In this article we consider only $G=SL(2)$. However, we hope for a similar theory for any reductive $G$.

\subsection{Motivation}
Although the article is about automorphic forms in the most classical sense, the motivation comes from works \cite{DG2,G1}, which are devoted to the \emph{geometric} Langlands program. Let us explain more details.
\end{rem}

\subsubsection{A remarkable $l$-adic complex on $\Bun_G\times\Bun_G\,$}
Let $X$ be a geometrically connected smooth projective curve over a finite field $\bF_q\,$. Let $\Bun_G$ denote the stack of $G$-bundles on $X$. Let $\Delta:\Bun_G\to\Bun_G\times\Bun_G$ be the diagonal morphism. We have the $l$-adic complex $\Delta_*(\ql)$ on 
$\Bun_G\times\Bun_G\,$. 

Our interest in this complex is motivated by the fact that an analogous complex of  D-modules\footnote{More precisely, the complex $\Delta_!\omega_{\Bun_G}\,$, where $\Bun_G$ is the stack of $G$-bundles on a geometrically connected smooth projective curve over a field of characteristic 0. Analogy between $l$-adic sheaves and D-modules is discussed in Subsect.~\ref{ss:D&ell} of Appendix~\ref{s:miraculous}.} plays a crucial role in the theory of \emph{miraculous duality} on $\Bun_G\,$, which was developed in  \cite[Sect.~4.5]{DG2} and \cite{G1}. This theory tells us that the DG category of (complexes of) D-modules\footnote{Here we assume that the ground field has characteristic 0.} on $\Bun_G\,$ is equivalent to its Lurie dual (as predicted by the geometric Langlands philosophy), but the equivalence is defined in
a nontrivial way\footnote{The definition involves the complex $\Delta_!\omega_{\Bun_G}\,$, see Subsections~\ref{ss:psid}-\ref{ss:miraculous} of Appendix~\ref{s:miraculous}.}, and the fact the the functor in question is an equivalence is a highly nontrivial theorem \cite[Theorem~0.2.4]{G1}. More details on miraculous duality can be found in Subsection~\ref{ss:miraculous} of Appendix~\ref{s:miraculous}.

\subsubsection{The function $b$}   \label{sss:b}
Given $\cL_1,\cL_2\in\Bun_G (\bF_q)$, let $b(\cL_1,\cL_2)$ (resp. $b_{naive}(\cL_1,\cL_2)$) denote the trace of the geometric Frobenius acting on the stalk of the complex $\Delta_*(\ql)$ (resp. $\Delta_!(\ql)$) over the point $(\cL_1,\cL_2)\in(\Bun_G\times\Bun_G) (\overline{\bF}_q)$. It is clear that $b_{naive}(\cL_1,\cL_2)$ is just the number of isomorphisms between the $G$-bundles $\cL_1$ and $\cL_2$. 
Simon Schieder \cite[Prop.~8.1.5]{S} obtained the following  explicit formula for $b(\cL_1,\cL_2)$, in which the $SL(2)$-bundles $\cL_i$ are considered as rank 2 vector bundles:
\begin{equation}   \label{e:Schieder}
b(\cL_1,\cL_2)=b_{naive}(\cL_1,\cL_2)-\sum_f r(D_f),
\end{equation}
where $f$ runs through the set of  vector bundle morphisms $\cL_1\to\cL_2$ having rank 1 at the generic point of the curve, $D_f\subset X$ is the scheme of zeros of $f$, and for any finite subscheme $D\subset X$
\begin{equation} \label{e:r-Schieder}
r(D):=\prod_{x\in D_{\red}}(1-q_x).
\end{equation}

Here $q_x$ denotes the order of the residue field of $x$. 

\subsubsection{Relation between $\B$ and $b$}
Let $F$ be the field of rational functions on $X$. Then the quotient $K\backslash G(\bA )/G(F)$ identifies with 
$\Bun_G(\bF_q)$. So the functions $b$ and $b_{naive}$ from Subsect.~\ref{sss:b} can be considered as functions on $(G(\bA )/G(F))\times (G(\bA )/G(F))$. The following theorem is one of our main results. It will be proved in Subsect.~\ref{ss:proof-main}.

\begin{thm}   \label{t:our main}
As before, let $F$ be a function field.
Normalize the Haar measure on $G(\bA )$ so that $K$ has measure 1. Then for any $f_1,f_2\in\cA_c^K$ one has
\begin{equation}   \label{e:B}
\B (f_1,f_2)=\int\limits_{(G\times G)(\bA)/(G\times G)(F)}  b(x_1,x_2) f_1(x_1)f_2(x_2) dx_1 dx_2\,.
\end{equation}
\end{thm}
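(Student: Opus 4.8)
The plan is to reduce \thmref{t:our main} to a pointwise identity on $\Bun_G(\bF_q)$ and then to match the two summands of $\B$ with the two summands of Schieder's formula \eqref{e:Schieder}. Since $K$ is open and compact and $K\backslash G(\bA)/G(F)=\Bun_G(\bF_q)$, every $f\in\cA_c^K$ is a finite $E$-linear combination of the characteristic functions $\delta_\cL$ of the (open, compact) $K$-orbits indexed by $\cL\in\Bun_G(\bF_q)$. Both sides of \eqref{e:B} are bilinear in $(f_1,f_2)$, so it suffices to treat $f_i=\delta_{\cL_i}$. For the right-hand side, the normalization $\mes(K)=1$ gives $\mes(Kg_iG(F)/G(F))=1/|\Aut\cL_i|$, so \eqref{e:B} becomes the assertion
\begin{equation}\label{e:pointwise}
\B(\delta_{\cL_1},\delta_{\cL_2})=\frac{b(\cL_1,\cL_2)}{|\Aut\cL_1|\cdot|\Aut\cL_2|}.
\end{equation}
Substituting the definition $\B=\B_{naive}-\langle M^{-1}\CT(\cdot),\CT(\cdot)\rangle$ on the left and \eqref{e:Schieder} on the right, \eqref{e:pointwise} splits into a ``naive'' identity and a ``correction'' identity, to be proved separately.

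The naive identity $\B_{naive}(\delta_{\cL_1},\delta_{\cL_2})=b_{naive}(\cL_1,\cL_2)/(|\Aut\cL_1|\cdot|\Aut\cL_2|)$ is elementary: the two $K$-orbits are disjoint unless $\cL_1=\cL_2$ as points of $\Bun_G(\bF_q)$, in which case the orbit has volume $1/|\Aut\cL_1|$; correspondingly $b_{naive}(\cL_1,\cL_2)=|\Isom(\cL_1,\cL_2)|$ equals $|\Aut\cL_1|$ when $\cL_1=\cL_2$ and vanishes otherwise, so the two sides agree in both cases.

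The correction identity is the heart of the matter; here I would show
\begin{equation}\label{e:corr}
\langle M^{-1}\CT(\delta_{\cL_1}),\CT(\delta_{\cL_2})\rangle=\frac{1}{|\Aut\cL_1|\cdot|\Aut\cL_2|}\sum_f r(D_f),
\end{equation}
the sum being over vector-bundle maps $f\colon\cL_1\to\cL_2$ of generic rank $1$. First I would compute $\CT(\delta_\cL)$ explicitly: the constant term, being integration over $N(\bA)/N(F)$ followed by restriction to the Levi, is the functions--sheaves avatar of pullback--pushforward along $\Bun_T\leftarrow\Bun_B\to\Bun_G$, so $\CT(\delta_\cL)$ is the function on $\Pic(X)(\bF_q)=\Bun_T(\bF_q)$ whose value at a line bundle $\cM$ is the (suitably $\delta_B^{1/2}$-weighted, automorphism-counted) number of saturated line subbundles of $\cL$ isomorphic to $\cM$. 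A generic-rank-$1$ map $f\colon\cL_1\to\cL_2$ then decomposes canonically into a saturated line subbundle $\cM_1\subset\cL_1$ (its saturated kernel), a saturated line subbundle $\cM_2\subset\cL_2$ (the saturation of its image), and a nonzero map between the associated line bundles whose vanishing locus records $D_f$; this is precisely the data paired by the two constant terms. The remaining sum over the relative position of the two flags, weighted by $r(D_f)=\prod_{x\in D_{\red}}(1-q_x)$, should then be recognized as the kernel of the standard intertwiner $M^{-1}$: the local factors of $M^{-1}$ at the relevant spectral point specialize to $1-q_x$, and their emergence through the excision of the diagonals in the symmetric powers (Schieder's Picard--Lefschetz oscillators) is exactly what produces the product $\prod(1-q_x)$ rather than a naive divisor count.

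The main obstacle is the correction identity \eqref{e:corr}, and within it two points require care. First, convergence and regularization: $\CT(\delta_\cL)$ has support unbounded below in $\deg\cM$, so the pairing $\langle M^{-1}\CT(\delta_{\cL_1}),\CT(\delta_{\cL_2})\rangle$ is not literally a finite sum and must be interpreted through the meromorphic continuation and functional equation built into the definition of $M$; alternatively one may appeal to the description of $L$ on Eisenstein series in \propref{p:properties of A}(ii) to reduce to a region of convergence. That the answer is nonetheless the manifestly finite quantity $\sum_f r(D_f)$ (finite because $\Hom(\cL_1,\cL_2)$ is a finite $\bF_q$-vector space) is a nontrivial consistency check. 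Second, bookkeeping: one must track the automorphism factors along $\Bun_G\leftarrow\Bun_B\to\Bun_T$ together with the modulus character $\delta_B^{1/2}$, so that the $q$-powers in the two weightings agree on the nose; it is here that the precise normalizations fixed in the definition of $\B$ (Subsection~\ref{ss:def of B}) must be invoked, and any discrepancy of convention would surface only at this final stage.
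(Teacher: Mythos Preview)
Your plan is essentially the paper's own approach: split $\B$ into the naive piece and the correction $\langle M^{-1}\CT f_1,\CT f_2\rangle$, match the naive piece with $b_{naive}$, and match the correction with $\sum_f r(D_f)$ via the decomposition of a generic-rank-$1$ map through its (saturated) kernel and image. The paper carries this out in \S\ref{s:K-invariants}--\S\ref{s:the form on K-invariants}, packaging the second step as \propref{p:non-naive} and \propref{p:S-r}.

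However, the proposal contains one wrong worry and one genuine gap.

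The convergence concern is misplaced. Since $\CT(\cA_c^K)\subset\C_-^K$ and $M^{-1}$ maps $\C_-^K$ to $\C_+^K$ (this is \propref{p:invertibility}), the pairing $\langle M^{-1}\CT(\delta_{\cL_1}),\CT(\delta_{\cL_2})\rangle$ is literally a finite sum over $\Pic X$: one factor is supported in $\{\deg\cM\ge Q\}$ and the other in $\{\deg\cM\le Q'\}$. No meromorphic continuation, no appeal to \propref{p:properties of A}(ii), is required. Relatedly, there is no $\delta_B^{1/2}$ in the paper's conventions---$\CT$ and $M$ are unnormalized (see \S\ref{ss:CT}, \S\ref{ss:M})---so the bookkeeping you anticipate is simpler than you suggest, but your weighted description of $\CT(\delta_\cL)$ would have to be redone.

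The genuine gap is the step you flag as ``should then be recognized as the kernel of $M^{-1}$.'' This is the whole computation. The paper proves it by writing $M^K$ as convolution on $\Div(X)$ by an element $\alpha=\bigotimes_v\alpha_v$ (\lemref{l:M^K as conv}), then explicitly inverting each local factor: the Mellin transform of $\alpha_v$ is $\zeta_{F_v}(s-1)/\zeta_{F_v}(s)$, so that of its inverse $\beta_v$ is $\zeta_{F_v}(s)/\zeta_{F_v}(s-1)$, giving $\beta_v=\delta_0+(1-q_v)\sum_{n\ge 1}\delta_n$ (\propref{p:beta_v}). Tensoring over all $v$ and inserting the normalization $\mes K=1$ (via \remref{r:comp}) yields $r(D)=\prod_{x\in D_{\red}}(1-q_x)$ on effective $D$ and $r(D)=0$ otherwise. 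Your sentence about ``local factors of $M^{-1}$ at the relevant spectral point specializing to $1-q_x$'' gestures at this, but the actual argument is a direct convolution inversion on $\Div_+(F_v)$, not an evaluation at a single spectral point; without it the correction identity \eqref{e:corr} is unproved.
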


\begin{rem}   \label{r:Bnaive}
It is easy to see that in the situation of the theorem one has
\begin{equation}
\B_{naive}(f_1,f_2)=\int\limits_{(G\times G)(\bA)/(G\times G)(F)}  b_{naive}(x_1,x_2) f_1(x_1)f_2(x_2) dx_1 dx_2\,.
\end{equation}
\end{rem}

\begin{rem}   \label{r:hope}
We started working on this project by considering \eqref{e:B} as a temporary definition of $\B$. Thus $\B$ was defined only for function fields and only on the space of $K$-invariant functions from $\cA_c\,$, and the problem was to remove these two assumptions. The possibility of doing this was not clear \emph{a priori}, but formula~ \eqref{e:r-Schieder} gave some hope. Indeed, the key ingredient of this formula is the sequence $r_n=r_n(x)$ defined by
\[
r_0=1, \quad r_n=1-q_x \mbox{ if } n>0,
\]
and the good news is that 
\begin{equation}   \label{e:good news}
\sum\limits_{n=0}^\infty r_nq_x^{-ns}=\zeta_{F_x}(s)/\zeta_{F_x}(s-1),
\end{equation}
so the \emph{right-}hand side of \eqref{e:good news} makes sense \emph{even if $F_x$ is Archimedean.}
\end{rem}

\begin{rem}
The function $(\cL_1 ,\cL_2)\mapsto b(\cL_1 ,\cL_2)$ defined in Subsect.~\ref{sss:b} has the following property: for any closed point $x\in X$ one has 
\begin{equation}   \label{e:bHecke}
T_x^{(1)}(b)=T_x^{(2)}(b),
\end{equation}
where  $T_x^{(i)}$ denotes the Hecke operator with respect to $\cL_i\,$. This clearly follows from Theorem~\ref{t:our main} and $G(\bA )$-invariance of the form $\B$. On the other hand, it is not hard to deduce \eqref{e:bHecke} from the cohomological definition of $b$ given in Subsect.~\ref{sss:b}.
\end{rem}

\subsection{On the function spaces $\cA$, $\cA_c\,$,  and others}   \label{ss:apology}
The function spaces $\cA$ and $\cA_c\,$ introduced in Subsect.~\ref{sss:automorphic} are quite reasonable in the case that $F$ is a function field
(which is most important for us). If $F$ is a number field they are not really good (e.g., the space of cusp forms is not contained in $\cA_c$ in the number field case). The same is true for the function spaces $\C$, $\C_c\,$, and $\C_{\pm}\,$ introduced in \S\ref{ss:C} below.

The readers interested in the number field case are therefore invited to replace the spaces $\cA$, $\cA_c$, $\C$, $\C_c\,$, $\C_{\pm}$ by more appropriate ones.\footnote{E.g., it seems reasonable to replace $\cA_c$ by the space of all functions $f\in\cA$ such that $D(f)$ rapidly decreases (in the sense of \cite[Exercise 3.2.5]{Bu}) for every element $D$ of the universal enveloping algebra of the Lie $\bR$-algebra $\fs\fl(2,F\otimes\bR)$.} However, this is beyond the scope of this article.

\subsection{Structure of the article}
\subsubsection{A general remark}
In the main body of this article we work only with functions on $G(\bA )/G(F)$; sheaves appear only behind the scenes (e.g., as a source of the function $b$ from Subsect.~\ref{sss:b}).  But some strange definitions\footnote{E.g., the definitions of the form $\B$, the operator $\Eis'$, and the space $\cA_{ps-c}\,$.} from the main body of the article are motivated by works \cite{DG2,DG3, G1} on the \emph{geometric} Langlands program. 
This motivation is explained in Appendices~\ref{s:miraculous} and \ref{s: Eis_! and Eis'}.
 
\subsubsection{The main body of  the article}  \label{sss:main body}
In \secref{s:Eisenstein} we recall basic facts about the Eisenstein operator $\Eis$, the constant term operator $\CT$ and the `standard intertwiner' $M$ (which appears in the classical formula $\CT\circ\Eis=1+M$).  Let us note that \propref{p:invertibility} is possibly new and the `second Eisenstein operator' $\Eis':=\Eis\circ M^{-1}$ from Subsect.~\ref{ss:second Eis} is not quite standard (in standard expositions $\Eis'$ is hidden in the formulation of the functional equation for the Eisenstein series). Our decision to introduce $\Eis'$ as a separate object is motivated by Theorem~\ref{t: Eis_! and Eis'} from Appendix~\ref{s: Eis_! and Eis'}; this theorem establishes a relation between the operator $\Eis'$ and the functor $\Eis_!$ considered in works  \cite{DG3,G1} on the geometric Langlands program.

In \secref{s:def of B} we define and study the bilinear form $\B$ and the operator $L:\cA_c\to\cA\,$. The operator $M^{-1}$ plays a key role here. According to \propref{p:properties of A}, the  operator $L$ acts as identity on cusp forms; on the other hand, $L\circ\Eis=-\Eis'$. In Subsect.~\ref{ss:not positive} we show that  if $F$ is a function field the form $\B$ is not positive definite. (Most probably, this is so for number fields as well.)

 In \secref{s:ps-c} we introduce a subspace $\cA_{ps-c}\subset\cA$, where `ps' stands for `pseudo' and `c' stands for `compact support'. Roughly, 
 $\cA_{ps-c}$ consists of functions $f\in\cA$ such that the support of the constant term of $f$ is bounded in the `wrong' direction. 
 In the function field case we prove that the operator $L:\cA_c\to\cA$ induces an isomorphism $\cA_c\iso\cA_{ps-c}\;$, and we explicitly compute the inverse isomorphism and the bilinear form on $\cA_{ps-c}$ corresponding to $\B$. (Let us note that the result of this computation is used in Appendix~\ref{s:conjectural} as a heuristic tool.)
 
 In \secref{s:K-invariants} we describe the restriction of the intertwiner $M$ and its inverse to the subspace of $K$-invariants. The description is given in a format which is convenient for the proofs of Theorems~\ref{t:our main} and \ref{t: Eis_! and Eis'}. Let us note that the function 
 $\zeta_{F_x}(s)/\zeta_{F_x}(s-1)$ (which already appeared in \remref{r:hope}) plays a key role in  \secref{s:K-invariants} (see \propref{p:Mellin} and the proof of \propref{p:beta_v}).
 
In \secref{s:the form on K-invariants} we describe the restriction of the bilinear form $\cB$  to the subspace of $K$-invariants. 
In the function field case this description matches the r.h.s of formula~\eqref{e:B}, where $b$ is given by Schieder's formula \eqref{e:Schieder}; this gives a proof of Theorem~\ref{t:our main}. In the number field case we get a similar description in terms of Arakelov $G$-bundles, see 
Subsect.~\ref{sss:numb case}.

In \secref{s:invertibility} we prove the existence of $M^{-1}$ (this statement is used throughout the article).

\subsubsection{Appendices~\ref{s:miraculous}-\ref{s:conjectural}}
In Appendix~\ref{s:miraculous} we discuss a dictionary between the classical world of functions on $G(\bA )/G(F)$ and the non-classical world of D-modules on $\Bun_G$ considered in~\cite{DG2,DG3, G1} (or the parallel non-classical world of $l$-adic sheaves on $\Bun_G$). Then we use this 
dictionary and the results of \cite{DG2,G1} to motivate the definitions of the form $\B$ and the function $b$ from Subsection~\ref{sss:b}.

Let us recall an important difference between the world of functions and that of $l$-adic sheaves (or D-modules). For functions, there is only one type of pullback and one type of pushforward. For sheaves (or D-modules) one has \emph{four} functors (two pullbacks and two pushforwards). It is convenient  to group the four functors into two pairs: the pair of `right' functors\footnote{Each `right' functor is right adjoint to the corresponding `left' functor.} (i.e., $f^!$ and $f_*$) and that of `left' functors (i.e., $f^*$ and $f_!$). 

For us, the `right' functors are the main ones\footnote{Because for non-holonomic D-modules, the `left' functors are only partially defined.}, and in Subsect.~\ref{sss:Functions-sheaves} we redefine the functions-sheaves dictionary accordingly, so that the pullback and pushforward for functions correspond to the `right' functors $f^!$ and $f_*\,$. With this convention, the usual Eisenstein operator $\Eis$ corresponds to the `right' Eisenstein functor $\Eis_*$ (see Subsect.~\ref{sss:Eis*} for details). A more surprising part of the dictionary from Appendix~\ref{s:miraculous} is that  the `left' 
Eisenstein functor $\Eis_!$ (see Subsections~\ref{sss:Eis!}-\ref{sss:analog of Eis!} for details) is closely related to the `second Eisenstein operator' $\Eis'$ from Subsect.~\ref{ss:second Eis}.

The precise formulation of the above-mentioned relationship between $\Eis_!$ and $\Eis'$ is contained in Theorem~\ref{t: Eis_! and Eis'}.
Appendix~\ref{s: Eis_! and Eis'} is devoted to the proof of this theorem. Let us note that Appendix~\ref{s: Eis_! and Eis'} can be read independently of Appendix~\ref{s:miraculous}.

In Appendix~\ref{s:conjectural} we formulate a conjectural D-module analog of the elementary formula~\eqref{e:Ainverse}.

\subsection{A remark on Mellin transform}   \label{ss:avoiding Mellin}
The operator $\Eis$ is defined on a space of functions on $G(\bA )/T(F)N(\bA )$, where $N\subset G$ is a maximal unipotent subgroup and 
$T\simeq\bG_m$ is a maximal torus. We prefer not to decompose the space of such functions as a direct integral with respect to characters of
$T(\bA )/T(F )$. In other words, we avoid Mellin transform as much as possible. 

Here is one of the reasons for this. We prefer to do only those manipulations with functions that can be also done for $l$-adic sheaves and D-modules. On the other hand, in the setting of $l$-adic sheaves the Mellin transform on a torus  \cite{GL} or a similar functor on an abelian variety is not invertible.

\subsection{Relation with Bernstein's `second adjointness'} As already mentioned in Subsect.~\ref{sss:main body}, the inverse of the standard intertwiner $M$ plays a key role in this article. The operator $M$ has a local counterpart $M_v\,$, which is the Radon transform. The operator $M_v^{-1}$ is essentially the same as the `Bernstein map' introduced in \cite[Def.~5.3]{BK}; the precise meaning of the words `essentially the same' is explained in \cite[Theorem~7.6]{BK}. Let us also mention that the Bernstein map is studied in \cite{SV} (under the name of asymptotic map) in the more general context of spherical varieties.

In this paper (which is devoted to the case $G=SL(2)$) we do not use the machinery of \cite{BK,SV}. But probably this machinery will become necessary to treat an arbitrary reductive group $G$.

\subsection{Acknowledgements}
This article is strongly influenced by the ideas of J.~Bernstein and works by R.~Bezrukavnikov and D.~Gaitsgory (who are students of Bernstein) and S.~Schieder (a student of D.~Gaitsgory).

We thank S.~Schieder for informing us about his results. We thank J.~Arthur, J.~Bernstein, R.~Bezrukavnikov,  W.~Casselman, S.~Helgason, D.~Kazhdan, A.~Knapp, S.~Raskin, Y.~Sakellaridis, and N.~Wallach for valuable advice.

The research of V. D. was partially supported by NSF grants DMS-1001660 and DMS-1303100. The research of
J. W. was partially supported by the Department of Defense (DoD) through the NDSEG fellowship.

\section{Recollections on the Eisenstein and constant term operators}   \label{s:Eisenstein}
In this section we recall basic facts from the theory of Eisenstein series for $SL(2)$.
A detailed exposition can be found in \cite{Bu,Go, GS, Lan1, Lan2, MW} or  \cite[Sect.~16]{JL}).
Let us note that \propref{p:invertibility} is possibly new and the `second Eisenstein operator' from Subsect.~\ref{ss:second Eis} is not quite standard. 

\medskip

Recall that $G:=SL(2)$. Let $T\subset G$ denote the subgroup of diagonal matrices. 
Let $B\subset G$ be the subgroup of upper-triangular matrices and $N$ its unipotent radical.
Let $K\subset G(\bA )$ denote the maximal compact subgroup defined in Subsect.~\ref{sss:some not}.

We will identify $T$ with $\bG_m$ using the isomorphism
\begin{equation}   \label{e:coroot}
\BG_m\iso T, \quad t\mapsto \diag (t,t^{-1}):=\left(\begin{matrix} t&0\\0&t^{-1}\end{matrix}\right).
\end{equation}

\subsection{The degree map $\bA^{\times}/F^{\times}\to\bR\,$}  \label{ss:degree}
For $a\in\bA^{\times}/F^{\times}$ we set $\deg a:=-\log ||a||$, where the base of the logarithm is some fixed positive number greater than 1. If $F$ is a function field we always understand $\log$ as $\log_q\, $, where $q$ is the order of the field of constants of $F$.

\subsection{The degree map $K\backslash G(\bA )/T(F)N(\bA)\to\bR\,$}
\label{ss:adelic norm}
This is the unique map 
\begin{equation} \label{e:adelic norm}
\deg :K\backslash G(\bA )/T(F)N(\bA)\to\bR
\end{equation}
that takes $\diag  (a,a^{-1})$ to $\deg a$ for any $a\in\bA^{\times}$. The map \eqref{e:adelic norm} is continuous and proper.

\subsection{The spaces $\C$, $\C_c$, $\C_{\pm}\,$} \label{ss:C}
Let $\C$ denote the space of $K$-finite $C^{\infty}$ functions on $G(\bA )/T(F )N(\bA )$. Let $\C_c\subset\C$ stand for the subspace of compactly supported functions.

Given a real number $R$, let $\C_{\le R}\subset\C$ denote the set of all functions $f\in\C$ such that $f(x)\ne 0$ only if $\deg x\le R$ (here $\deg$ is understood in the sense of Subsection~\ref{ss:adelic norm}). Similarly, we have 
$\C_{\ge R}$, $\C_{> R}$, and so on.

Let $\Cm$ denote the union of the subspaces $\C_{\le R}$ for all $R$. Let $\Cp$ denote the union of the subspaces $\C_{\ge R}$ for all $R$.

Clearly $\Cm\cap\Cp=\C_c\,$, $\Cm+\Cp=\C\,$.

As already said in Subsect.~\ref{ss:apology}, in the number field case the spaces $\C$, $\C_c\,$, $\C_{\pm}$ are not quite reasonable. Nevertheless, we will work with them. 

\begin{ex}   \label{ex:functional}
Let $X$ be a geometrically connected smooth projective curve over a finite field $\bF_q\,$. Let $F$ be the field of rational functions on $X$. Let $O_{\BA}\subset\BA$ denote the subring of integral adeles. Since $G(\bA )=K\cdot B(\bA)$ the set $K\backslash G(\bA )/T(F)N(\bA)$ identifies with $T(O_{\bA})\backslash T(\bA)/T(F)$ and then (using the isomorphism \eqref{e:coroot}) with $O_{\bA}^{\times}\backslash \bA^{\times}/F^{\times}$, which is the same as the Picard group\footnote{Unless specified otherwise, in this article the symbol $\Pic$ denotes the Picard group (which is an \emph{abstract} group) rather than the Picard \emph{scheme}.} $\Pic X$. 
The space $\C^K$ identifies with the space of all functions on $\Pic X$. The space $\Cp^K$ (resp. $\Cm^K$) identifies with the space of functions $f$ on $\Pic X$ such that $f(\cM )=0$ if $\deg\cM\ll0$ (resp. if $\deg\cM\gg 0$).
\end{ex}

\subsection{Properties of the map $G(\bA )/B(F)\to G(\bA )/G(F )$}  
The following fact is well known and easy.

\begin{prop}   \label{p:deg+deg}
Suppose that $x,y\in G(\bA )/B(F)$ have the same image in $G(\bA )/G(F )$. If $x\ne y$ then $\deg x+\deg y\le 0$.
\end{prop}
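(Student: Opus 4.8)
The plan is to reinterpret the degree map in terms of an adelic norm on the standard representation of $G=SL(2)$ and then reduce the inequality to the product formula. Equip $\bA^2$ with the norm $\|w\|_\bA:=\prod_v\|w\|_v$, where at each place $\|\cdot\|_v$ is the standard $K_v$-invariant norm on $F_v^2$ (the sup norm for non-Archimedean $v$, the Euclidean norm for $F_v=\bR$, the Hermitian norm for $F_v=\bC$); this product is finite for $w\in\bA^2$ since all but finitely many local factors equal $1$. Writing $e_1=(1,0)$, I first claim that for $g\in G(\bA)$ the quantity $\|ge_1\|_\bA$ depends only on the coset $gB(F)$ and satisfies $\deg(gB(F))=-\log\|ge_1\|_\bA$. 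Well-definedness is the point where the product formula enters: since $B=TN$ fixes the line $\langle e_1\rangle$, right translation by $b\in B(F)$ replaces $ge_1$ by $a\cdot ge_1$ for some $a\in F^\times$, and $\|a\|_\bA=1$. The identity with $\deg$ is then checked via the Iwasawa decomposition $g=k\cdot\diag(a,a^{-1})\cdot n$: one gets $\|ge_1\|_\bA=\|a\|_\bA$, so $-\log\|ge_1\|_\bA=\deg a=\deg(gB(F))$ by the definition in Subsect.~\ref{ss:adelic norm}.

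Next I would translate the hypotheses. Since $x,y\in G(\bA)/B(F)$ have the same image in $G(\bA)/G(F)$, we may write $x=g\gamma_1B(F)$ and $y=g\gamma_2B(F)$ for a common $g\in G(\bA)$ and some $\gamma_1,\gamma_2\in G(F)$. Put $v_i:=\gamma_ie_1\in F^2\setminus\{0\}$. The condition $x\ne y$ means $\gamma_1B(F)\ne\gamma_2B(F)$, i.e. the rational lines $\langle v_1\rangle$ and $\langle v_2\rangle$ are distinct, i.e. $v_1,v_2$ are linearly independent over $F$. By the first paragraph, $\deg x+\deg y=-\log(\|gv_1\|_\bA\cdot\|gv_2\|_\bA)$, so the assertion $\deg x+\deg y\le 0$ is equivalent to the inequality $\|gv_1\|_\bA\cdot\|gv_2\|_\bA\ge 1$.

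The crux is this last inequality, and it comes from the product formula applied to the determinant. Since $v_1,v_2$ are linearly independent, $v_1\wedge v_2=c\,(e_1\wedge e_2)$ with $c\in F^\times$, and as $g\in SL(2)$ has trivial determinant, $gv_1\wedge gv_2=v_1\wedge v_2=c\,(e_1\wedge e_2)$ in $\wedge^2\bA^2=\bA$. At each place one has the sub-multiplicativity $\|gv_1\|_v\,\|gv_2\|_v\ge|c|_v$: for non-Archimedean $v$ this is the ultrametric estimate $|ad-bc|_v\le\max(|a|_v,|b|_v)\max(|c|_v,|d|_v)$ for the sup norm, and for Archimedean $v$ it is Hadamard's inequality for the Euclidean/Hermitian norm (note $\|e_1\wedge e_2\|_v=1$ at every place). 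Taking the product over all $v$ and invoking the product formula gives $\|gv_1\|_\bA\,\|gv_2\|_\bA\ge\prod_v|c|_v=\|c\|_\bA=1$, as desired.

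The only genuine content is the per-place inequality combined with the product formula; the rest is the bookkeeping needed to express $\deg$ through the adelic norm. I expect the step requiring most care to be the verification that $gB(F)\mapsto\|ge_1\|_\bA$ is well defined and agrees with $\deg$, so that both the Archimedean and non-Archimedean normalizations are consistent with the convention of Subsect.~\ref{ss:degree}; once this is in place the inequality itself is immediate.
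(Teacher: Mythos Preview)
Your argument is correct and is the standard proof of this well-known fact. The paper itself omits the proof entirely, merely stating that it is ``well known and easy,'' so there is nothing to compare against.

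One small normalization point worth tightening: at a complex place the $K_v$-invariant norm you want is not the Hermitian norm itself but its square, i.e.\ $\|(z_1,z_2)\|_v = |z_1|^2 + |z_2|^2$, so that $\|a e_1\|_v$ equals the \emph{normalized} absolute value $|a|_v = |a|^2$ used in the definition of $\deg$ (Subsect.~\ref{ss:degree}). With this convention the local inequality $\|gv_1\|_v\,\|gv_2\|_v \ge |c|_v$ is just Hadamard's inequality squared, and everything goes through. (This is also the paper's own convention; see the footnote in Subsect.~6.4.1, where the norm on $F_v^n$ is defined as the $l^p$-norm of the normalized absolute values with $p = [\bar F_v : F_v]$.)
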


The following well known fact is the main result of reduction theory. 

\begin{prop} \label{p:Minkowski}
There exists a number $R(F)$ with the following property: each point of $G(\bA )/G(F )$ has a pre-image in $G(\bA )/B(F)$ whose degree is  
$\,\ge -R(F)$.
\end{prop}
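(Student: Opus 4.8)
The plan is to prove \propref{p:Minkowski} by a direct reduction-theoretic argument, reducing it to an Archimedean or single-place statement via the product structure of the adeles. Concretely, since $G(\bA)=K\cdot B(\bA)$ (the Iwasawa decomposition holds adelically), every point of $G(\bA)/G(F)$ has \emph{some} pre-image in $G(\bA)/B(F)$, so the content of the proposition is not existence of a pre-image but a uniform lower bound on the degree of the ``best'' pre-image. I would restate the claim in the form: there is a constant $R(F)$ such that for every $g\in G(\bA)$ there exists $\ga\in G(F)$ with $\deg(g\ga)\ge -R(F)$, where $\deg$ is the map of \secref{ss:adelic norm} applied to the coset of $g\ga$ in $K\backslash G(\bA)/T(F)N(\bA)$.

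First I would make the degree function explicit. Writing $g=k\cdot\diag(a,a^{-1})\cdot n$ with $k\in K$, $a\in\bA^\times$, $n\in N(\bA)$ via the Iwasawa decomposition, one has $\deg g=\deg a=-\log\|a\|$. For $v\in G(\bA)/B(F)$ the quantity $\deg v$ is thus a ``height'' attached to the corresponding reduction of the rank-$2$ bundle (in the function-field picture of Example~\ref{ex:functional}, $\deg$ is literally the degree of the line subbundle determining the $B$-reduction). The proposition then asserts that every rank-$2$ adelic lattice admits a line subbundle of degree bounded below independently of the lattice, which is exactly the classical statement that the Harder--Narasimhan slope (equivalently, the maximal degree of a destabilizing sub-line-bundle) of a rank-$2$ object is bounded below in each fixed degree class, uniformly over $G(\bA)/G(F)$.

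The key steps, in order, are: (1) fix the Iwasawa form and reduce to bounding $\sup_{\ga\in G(F)}\deg(g\ga)$ from below; (2) interpret $\deg(g\ga)$ as $-\log$ of the norm of the first row (or of a primitive vector) of $g\ga$, so that maximizing the degree amounts to \emph{minimizing} the adelic norm of a nonzero vector in the $F$-lattice $g\cdot F^2$; (3) invoke the adelic analogue of Minkowski's theorem (the existence of a short vector in an adelic lattice, i.e.\ geometry of numbers over $\bA$, or in the function-field case Riemann--Roch), which supplies a nonzero vector whose norm is bounded above by a constant depending only on $F$ (through its discriminant / the genus and $q$) times a power of the covolume; (4) since $G=SL(2)$ the covolume is fixed (determinant $1$), so the bound is uniform, and translating back through $-\log$ gives the desired lower bound $-R(F)$ on the degree. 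The name $R(F)$ and its dependence only on $F$ fall out of the Minkowski constant.

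The main obstacle I expect is not the geometry-of-numbers input itself but packaging it adelically and uniformly across all places, including the Archimedean ones in the number-field case, so that the constant $R(F)$ is genuinely independent of the point $g$. Precisely, one must control the short vector at every place simultaneously and ensure the product formula does not let the norm escape to infinity; this is exactly where the classical ``reduction theory'' bookkeeping lives. In the function-field case this step is clean (Riemann--Roch gives that a line bundle of degree $\ge g_X$ on a curve of genus $g_X$ has a section, yielding an explicit $R(F)$ in terms of $g_X$), so I would present that case first as the model and then indicate that the number-field case follows from the standard adelic Minkowski bound, referring to the reduction-theory references \cite{Go,MW} already cited in \secref{s:Eisenstein} rather than reproving it.
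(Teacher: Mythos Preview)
The paper does not actually prove \propref{p:Minkowski}; it merely records it as ``the main result of reduction theory'' and moves on. Your outline is a correct sketch of the standard argument: Iwasawa decomposition reduces the question to finding a short nonzero $F$-vector in the adelic lattice $g\cdot F^2$, and since $\det g=1$ the covolume is fixed, so the adelic Minkowski bound (or Riemann--Roch in the function-field case) gives a uniform constant depending only on $F$. There is nothing to compare against in the paper itself.
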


\subsection{The constant term operator}  \label{ss:CT}
Unless specified otherwise, we will always normalize the Haar measure on $N(\bA )$ so that $N(\bA )/N(F)$ has measure $1$. 

In Section~\ref{sss:automorphic} we defined the spaces $\cA$ and $\cA_c\subset\cA\,$.  
One has the \emph{constant term} operator $\CT:\cA\to\C$ defined by the formula
\begin{equation}
(\CT f)(x):=\int\limits_{N(\bA)/N(F)} f(xn)dn, \quad\quad f\in\cA\, , \, x\in G(\bA ).
\end{equation}
In other words, $\CT:\cA\to\C$ is the pull-push along the diagram
\begin{equation}  \label{e:1pull-push}
G(\bA )/G(F)\leftarrow G(\bA)/B(F)\to G(\bA)/T(F)N(\bA).
\end{equation}

It is well known that $\CT (\cA_c)\subset\Cm$ (this easily follows from \propref{p:deg+deg}).

\begin{ex}  \label{ex:CTfunctional}
Consider the situation of Example~\ref{ex:functional}. Then we saw that $\C^K$ identifies with the space of all functions on $\Pic X$. On the other hand, $\cA^K$ identifies with the space of all functions on $\Bun_G (\bF_q)$.
If $f$ is such a function and $\cM\in\Pic X$ then  $(\CT f)(\cM)$ is the average value of the pullback of $f$ to 
$\Ext (\cM^{-1},\cM)$ under the usual map $\Ext (\cM^{-1},\cM)\to\Bun_G$ (to an extension of $\cM^{-1}$ by $\cM$ one associates its central term). If $f$ has finite support then $(\CT f)(\cM)=0$ when $\deg\cM\gg 0$.
\end{ex}

\subsection{The Eisenstein operator}   \label{ss:Eis}
We define the \emph{Eisenstein operator}\footnote{The authors of \cite{MW} call it `pseudo-Eisenstein'.} 
$\Eis :\Cp\to\cA$ to be the pull-push along the diagram
\begin{equation}  \label{e:2pull-push}
G(\bA)/T(F)N(\bA)\leftarrow G(\bA)/B(F)\to G(\bA )/G(F)
\end{equation}
(to see that the pull-push makes sense, use \propref{p:deg+deg} combined with properness of the map 
$ G(\bA)/B(F)\to G(\bA)/T(F)N(\bA)$). Explicitly,
\begin{equation}
(\Eis \varphi)(x):=\sum\limits_{\gamma\in G(F)/B(F)} \varphi (x\gamma), \quad\quad  \varphi\in\Cp\, , \, x\in G(\bA ).
\end{equation}

It is easy to see that $\Eis (\C_c)\subset\cA_c\,$. 

\subsection{Duality between $\Eis$ and $\CT$}
Fix some Haar measure on $G(\bA )$. Combining it with the the Haar measure on $N(\bA )$ from Subsection~\ref{ss:CT}, we get an invariant measure on $G(\bA )/T(F)N(\bA )$ and therefore a pairing between $\Cm$ and $\Cp$ defined by
\begin{equation}  \label{e:pairing}
\langle \varphi_1\, ,\varphi_2\rangle:=\int\limits_{G(\bA )/T(F)N(\bA )} \varphi_1 (x)\varphi_2 (x) dx.
\end{equation}
We also have a similar pairing between $\C_c$ and $\C$.

On the other hand, we have the pairing between $\cA_c$ and $\cA$ denoted by $\B_{naive}$ and defined by \eqref{e:Bnaive}.

It is well known and easy to check that
\begin{equation}  \label{e:duality}
\langle\CT (f)\, ,\varphi\rangle=\B_{naive}(f, \Eis (\varphi ))
\end{equation}
if either $f\in\cA$ and $\varphi\in\C_c\,$, or  $f\in\cA_c$ and $\varphi\in\Cp\,$.

\subsection{The operator $M:\Cp\to\Cm\,$}  \label{ss:M}
Let $Y$ denote the space of pairs $(x_1,x_2)$, where $x_1,x_2\in G(\bA )/B(F)$ have equal image in $G(\bA )/G(F)$ and $x_1\ne x_2$. One has two projections $Y\to G(\bA )/B(F)$. Define $M:\Cp\to\Cm$ to be the pull-push along the diagram
\[
G(\bA )/T(F)N(\bA)\leftarrow G(\bA )/B(F)\leftarrow Y\to G(\bA )/B(F)\to G(\bA )/T(F)N(\bA).
\]
This makes sense by \propref{p:deg+deg}; moreover,  \propref{p:deg+deg} implies that for any number $R$ one has $M(\C_{\ge R})\subset \C_{\le -R}\, $.

The following explicit formula for $M:\Cp\to\Cm$ is well known:
\begin{equation}   \label{e:M}
(M\varphi )(x)=\int\limits_{N(\bA)}\varphi (xnw)dn, \quad\quad  \varphi\in\Cp\, ,\,  x\in G(\bA)/T(F)N(\bA ),
\end{equation}
where $w:=\left(\begin{matrix} 0&1\\-1&0\end{matrix}\right)\in SL(2)$.

Define an action of $\bA^{\times}/F^{\times}$ on $\C$ as follows: for $t\in\bA^{\times}/F^{\times}$ and $f\in\C$ we set
\begin{equation}   \label{e:t star f}
(t\star f)(x):=||t||^{-1}\cdot f(x\cdot \diag (t^{-1},t)), \quad\quad x\in G(\bA)/T(F)N(\bA ),
\end{equation}
where $\diag (t^{-1},t)$ is the diagonal matrix with entries $t^{-1},t$. Because of the $||t||^{-1}$ factor, this action preserves the scalar product \eqref{e:pairing}. One has
\begin{equation} \label{e:anticommutes}
M(t\star f)=t^{-1}\star Mf,  \quad\quad t\in \bA^{\times}/F^{\times}, f\in\C_+ \, .
\end{equation}

\subsection{The formula $\CT\circ\Eis=1+M$}   \label{ss:CTEis}
It is well known and easy to see that the composition  $\CT\circ\Eis :\Cp\to\C$ equals $1+M$, where $1$ denotes the identity embedding $\Cp\mono\C$ and $M$ is considered as an operator $\Cp\to\C$.

\subsection{The kernel of $\Eis :\C_c\to\cA_c\,$}  \label{ss;kernel}  

\begin{lem}   \label{l:KerEis}
Let $h\in\C_c\,$. Then $\Eis h=0$ if and only if $Mh=-h$.
\end{lem}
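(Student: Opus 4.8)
The \emph{only if} direction is immediate, so I would dispose of it first. Applying $\CT$ to the equation $\Eis h=0$ and invoking the identity $\CT\circ\Eis=1+M$ from Subsection~\ref{ss:CTEis}, one gets $h+Mh=\CT(\Eis h)=0$, i.e. $Mh=-h$. No further input is needed here.

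The substance is the converse, and my plan is as follows. Assume $Mh=-h$ and set $\psi:=\Eis h$. By $\CT\circ\Eis=1+M$ again, $\CT\psi=h+Mh=0$, so $\psi$ has vanishing constant term; moreover $\psi\in\cA_c$ because $\Eis(\C_c)\subset\cA_c$ (Subsection~\ref{ss:Eis}). Thus $\psi$ is a compactly supported cusp form, and the entire problem reduces to showing that such a $\psi$ vanishes.

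The mechanism I would use is that $\psi$ is $\B_{naive}$-orthogonal to every cusp form. Indeed, let $g\in\cA$ satisfy $\CT g=0$. Since $h\in\C_c$, the duality \eqref{e:duality} (applied with $f=g$ and $\varphi=h$), together with the symmetry of $\B_{naive}$, yields
\begin{equation*}
\B_{naive}(\psi,g)=\B_{naive}(g,\Eis h)=\langle\CT g\,,h\rangle=0.
\end{equation*}
Hence $\psi$ pairs trivially with every cusp form, while $\psi$ is itself a compactly supported cusp form. It therefore suffices to know that $\B_{naive}$ is nondegenerate on the space of compactly supported cusp forms.

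This nondegeneracy is the only nonformal ingredient, and I expect it to be the main obstacle; it is also where the nature of the coefficient field $E$ enters. For $E=\bR$ or $\bC$ (the relevant case when $F$ is a number field) I would argue by positivity: the complex (resp.\ real) conjugate $\bar\psi$ is again a compactly supported cusp form, so taking $g=\bar\psi$ above gives $\int_{G(\bA)/G(F)}|\psi|^2=\B_{naive}(\psi,\bar\psi)=0$, whence $\psi=0$. For a general field $E$ of characteristic $0$ (the function field case) one first observes that $\B_{naive}$ on cusp forms carries a natural $\bQ$-rational structure (the functions on $\Bun_G(\bF_q)$, the operator $\CT$, and the pairing are all defined over $\bQ$ once the measures are normalized), so nondegeneracy over an arbitrary $E$ follows from the case $E=\bC$ by invariance of the rank of a bilinear form under field extension. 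Everything else is a purely formal consequence of $\CT\circ\Eis=1+M$ and the $\CT$–$\Eis$ duality.
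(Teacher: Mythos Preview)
Your proof is correct and rests on the same mechanism as the paper's: the identity $\CT\circ\Eis=1+M$, the duality \eqref{e:duality}, and positivity of $\B_{naive}$ (with the reduction to $\bR$- or $\bQ$-valued $h$ handling the general coefficient field). The paper merely organizes the converse more directly: rather than first recognizing $\psi=\Eis h$ as a compactly supported cusp form and then showing it is orthogonal to all cusp forms, it simply computes
\[
\B_{naive}(\Eis h,\Eis h)=\langle\CT\circ\Eis(h),h\rangle=\langle(1+M)h,h\rangle=0
\]
in one line and invokes positivity. Your detour through cuspidality is true but not needed; otherwise the arguments coincide.
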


\begin{proof}
We have $\CT\circ\Eis=1+M$ (see Subsect.~\ref{ss:CTEis}). Therefore if $\Eis h=0$ then $(1+M)h= 0$.

To prove the converse, we can assume that $h$ takes values in $\bR$ (and even in $\bQ$ if $F$ is a function field). Then a positivity argument shows that to prove that 
$\Eis h=0$ it suffices to check that $\B_{naive} (\Eis h, \Eis h)=0$. But $$\B_{naive} (\Eis h, \Eis h)=\langle\CT\circ\Eis (h),h\rangle=
\langle(1+M)h,h\rangle=0$$ by formula~\eqref{e:duality}.
\end{proof}

\subsection{Invertibility of $M$}  \label{ss:invertibility}
\begin{prop}   \label{p:invertibility}
The operator $M:\Cp\to\Cm$ is invertible.
\end{prop}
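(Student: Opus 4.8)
The plan is to prove invertibility by constructing an explicit two-sided inverse built from $M$ itself, using the relation $\CT\circ\Eis=1+M$ together with the compatibility $(\ref{e:anticommutes})$ between $M$ and the $\bA^\times/F^\times$-action. First I would record the degree-shifting estimate already noted in Subsection~\ref{ss:M}: for every $R$ one has $M(\C_{\ge R})\subset\C_{\le -R}$. This means $M$ strictly improves the support bound, so on each graded piece $M$ behaves like a nilpotent-type perturbation of a map going in the ``right'' direction. The natural guess for the inverse is a geometric-series expression. Indeed, if one formally sets $M=-1+(\CT\circ\Eis)$ that is not directly useful; instead I would look for $N:\Cm\to\Cp$ with $MN=\id_{\Cm}$ and $NM=\id_{\Cp}$ by solving $M\varphi=\psi$ for given $\psi\in\Cm$ one graded layer at a time.

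The key steps, in order, are as follows. Step one: show injectivity of $M:\Cp\to\Cm$. Suppose $M\varphi=0$ with $\varphi\in\Cp$. Using $(\ref{e:anticommutes})$, the operator $M$ intertwines the action of $t$ with that of $t^{-1}$, so one can decompose (or filter) by the degree function and argue on the ``leading'' graded component, where the $+1$ in $\CT\circ\Eis=1+M$ forces a triviality unless $\varphi=0$. Concretely, if $\varphi\neq 0$ pick $R$ minimal with $\varphi\in\C_{\ge R}$; then the component of $M\varphi$ in degree $\le -R$ that is ``extremal'' must vanish, and the relation $(1+M)\varphi=\CT\,\Eis\,\varphi$ combined with the support estimate pins down the top piece of $\varphi$, giving a contradiction. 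Step two: surjectivity. Given $\psi\in\Cm$, I would solve $M\varphi=\psi$ by downward induction on the support bound: the extremal graded piece of $\psi$ determines the extremal piece of $\varphi$ because $M$ shifts $\C_{\ge R}$ into $\C_{\le -R}$, and then I subtract off and repeat; properness of the degree map $(\ref{e:adelic norm})$ guarantees the induction terminates on each $K$-finite, finitely-supported-modulo-$K$ layer. Step three: assemble these into a well-defined two-sided inverse $M^{-1}:\Cm\to\Cp$, checking it respects the $K$-finiteness and smoothness conditions defining $\C_\pm$.

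I would expect the main obstacle to be the convergence/termination of the inductive inversion, i.e.\ making the ``solve one graded piece at a time'' procedure into a genuine operator defined on all of $\Cm$ rather than merely on a completion. The subtlety is that $\C_-$ is a union $\bigcup_R\C_{\le R}$ of functions whose support is bounded above in degree but can be unbounded below, so the downward induction produces a potentially infinite expansion; one must verify the resulting $\varphi$ still lies in $\Cp$ (support bounded below) and is $C^\infty$ and $K$-finite. Here the precise local structure of $M$ — which in the $K$-invariant setting is governed by the ratio $\zeta_{F_x}(s)/\zeta_{F_x}(s-1)$ appearing in \remref{r:hope} and \secref{s:K-invariants} — is what rescues the argument: the local factors are invertible away from the relevant poles, and the product over places converges because almost all local operators are the identity on $K_v$-invariants. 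Since the excerpt defers the full existence proof to \secref{s:invertibility}, I would structure the argument to reduce the global claim to this local invertibility statement, proving the two-sided inverse place-by-place and then globalizing via the restricted tensor product decomposition of $\C$.
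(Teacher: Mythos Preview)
Your Steps 1 and 2 contain a genuine gap. The operator $M$ sends $\C_{\ge R}$ into $\C_{\le -R}$; it \emph{reverses} the degree direction rather than perturbing the identity, so there is no natural filtration on which $M$ is ``identity plus nilpotent'' and no geometric-series or degree-by-degree induction is available from the support estimate alone. In Step~1, the relation $\CT\circ\Eis=1+M$ gives you $\CT(\Eis\varphi)=\varphi$ when $M\varphi=0$, but this is no contradiction: it merely says $\varphi$ is the constant term of its own Eisenstein series. The ``extremal graded piece'' argument implicitly assumes that $M$ induces an isomorphism from the top layer of $\varphi$ (near degree $R$) to the bottom layer of $M\varphi$ (near degree $-R$); proving that is precisely the local invertibility statement, not a consequence of the support estimate or of $\CT\circ\Eis=1+M$. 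The same circularity afflicts Step~2: you cannot read off the extremal piece of $\varphi$ from that of $\psi$ without already knowing that the leading local contribution is invertible.

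Your final paragraph, by contrast, converges on the paper's actual strategy: reduce to invertibility of the local Radon transform $M_v$ on each $F_v$ and globalize via a restricted tensor product. The paper carries this out in \secref{s:invertibility} by fixing a character $\mu$ of $O_\bA^\times/(O_\bA^\times\cap F^\times)$ and a $K$-type $\rho$, introducing a filtered convolution algebra $\fR^\mu$ of distributions on $\bA^\times/F^\times$ (a substitute for Mellin transform), and showing that $\fC_\pm^{\mu,\rho}$ is free of finite rank over $\fR^\mu$. This is the device your sketch is missing: the infinite tensor product $\bigotimes_v M_v$ only makes sense over a ring in which the unramified local factors converge to $1$, and that ring is $\fR^\mu$ with its degree filtration. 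So the shape of your last paragraph is right, but it does not follow from the first two-thirds of the proposal; the local invertibility has to be the input from the start, not a rescue at the end.
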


We will prove this in Section~\ref{s:invertibility}. In fact, we will prove there a slightly stronger \propref{p:2invertibility}, which also says that
\begin{equation}   \label{e:continuity of inverse}
\mbox{if } \varphi\in\C_{\le -R} \mbox{ then } M^{-1}\varphi\in\C_{\ge R-a}\, , 
\end{equation}
where $a$ is a number depending only on the  non-Archimedean $K$-type of $\varphi$ (e.g., 
if $\varphi$ is $K_v$-invariant for each non-Archimidean place $v$ then one can take $a=0$).

Let us note that  a self-contained proof of invertibility of $M^K:\Cp^K\to\Cm^K$ is given in Section~\ref{s:K-invariants}.

\begin{rem}
Suppose that the field $E$ from Subsect.~\ref{sss:E} equals~$\bC$. Then $\Cp$ and $\Cm$ are LF-spaces (i.e., countable inductive limits of 
Fr\'echet spaces). The operator $M:\Cp\to\Cm$ is clearly continuous. By the open mapping theorem, this implies that $M^{-1}:\Cm\to\Cp$ is also continuous. On the other hand, continuity of $M^{-1}$ follows from 
\remref{r:continuity_automatic}.
\end{rem}

\subsection{The second Eisenstein operator} \label{ss:second Eis}
\subsubsection{Definition}
Define the \emph{second Eisenstein operator} 
$$\Eis':\Cm\to\cA$$ by
$\Eis':=\Eis\circ M^{-1}$. 
(A motivation will be given in Subsect.~\ref{sss:avatar} below.)

\begin{rem}   \label{r:CTEis'}
By Subsection~\ref{ss:CTEis}, the composition  $\CT\circ\Eis' :\Cm\to\C$ equals $1+M^{-1}$, where $1$ denotes the identity embedding $\Cm\mono\C$ and $M^{-1}$ is considered as an operator $\Cm\to\C$.
\end{rem}

\subsubsection{$\Eis'$ as an `avatar' of $\Eis$}    \label{sss:avatar}
The functional equation for Eisenstein series tells us that $\Eis'$ is an `avatar' of $\Eis$ in the sense of analytic continuation, just as the series $\sum\limits_{n\ge 0}z^n$ is an `avatar' of $\sum\limits_{n< 0}(-z^n)$.

To formulate a precise statement, let us assume that the field $E$ from Subsect.~\ref{sss:E} equals $\bC$.  Let $\varphi\in\C_c\,$.
For $t\in\bA^{\times}/F^{\times}$, define $h_t \, , h'_t\in\cA$ by
\[
h_t:=\Eis (t\star\varphi ) , \quad h'_t:=\Eis' (t\star\varphi ),
\]
where $t\star\varphi$ is defined by formula~\eqref{e:t star f}.
It is easy to check\footnote{To check the statement about $h'_t$, use \eqref{e:continuity of inverse}.} that for any fixed $g\in G(\bA )$ one has $h_t(g)=0$ if $||t||$ is small enough and $h'_t(g)=0$ if $||t||$ is big  enough. The theory of Eisenstein series tells us that $h_t$ and  $h'_t$ are related as follows: for any $g\in G(\bA )$ and any character $\chi :\bA^{\times}/F^{\times}\to\bC^{\times}$, the integral
\[
\int\limits_{t\in\bA^{\times}/F^{\times}} h_t(g)\chi (t) ||t||^{-s}dt
\]
absolutely converges if $\re s>1$, the integral
\[
\int\limits_{t\in\bA^{\times}/F^{\times}} h'_t(g)\chi (t) ||t||^{-s}dt
\]
absolutely converges if $\re s$  is sufficiently negative, and the functions of $s$ defined by these integrals extend to the same meromorphic function defined on the whole $\bC$.

\section{The bilinear form $\B$ and the operator $L$} \label{s:def of B}
\subsection{The form $\B$}   \label{ss:def of B}
Fix a Haar measure on $G(\bA )$. Then we have the form $\B_{naive}$ on $\cA_c$ and a pairing $\langle \, ,\rangle$ between $\Cp$ and $\Cm\,$, see formulas \eqref{e:Bnaive} and \eqref{e:pairing}. We also have a continuous linear operator $M^{-1}:\Cm\to\Cp\,$, see Section~\ref{ss:invertibility}.

\begin{defin}
For $f_1,f_2\in\cA_c$ set
\begin{equation}  \label{e:def of B}
\B (f_1,f_2):=\B_{naive}(f_1,f_2)-\langle M^{-1}\CT (f_1)\, ,\CT (f_2)\rangle \,.
\end{equation}
\end{defin}

The expression $\langle M^{-1}\CT (f_1)\, ,\CT (f_2)\rangle$ makes sense because $\CT (\cA_c)\subset\Cm\,$.

Note that $\B (f_1,f_2):=\B_{naive}(f_1,f_2)$ if $f_1$ or $f_2$ is cuspidal.

\subsection{The operator $L:\cA_c\to\cA\,$}  \label{ss:operator A}
One has the operators
\[
\cA_c\overset{\CT}\longrightarrow\Cm\overset{M^{-1}}\longrightarrow\Cp\overset{\Eis}\longrightarrow\cA\,.
\]
\begin{defin}
Define $L:\cA_c\to\cA$ by $L:=1-\Eis\circ M^{-1}\circ\CT$, where $1$ denotes the identity embedding $\cA_c\to\cA\,$.
\end{defin}

In other words, $L:=1-\Eis'\circ\CT$, where $\Eis'$ is the second Eisenstein operator defined in Subsection \ref{ss:second Eis}.

Note that unlike the form $\B$, the operator $L$ does not depend on the choice of a Haar measure on $G(\bA )$.

The relation between $\B$ and $L$ is as follows:
\begin{equation}   \label{e:B and A}
\B (f_1,f_2)=\B_{naive}(Lf_1,f_2)=\B_{naive}(f_1,Lf_2), \quad\quad f_1,f_2\in\cA_c\, .
\end{equation}
This is a consequence of formula \eqref{e:duality}.

Let $\cA_c^{cusp}$ denote the cuspidal part of $\cA_c\,$.

\begin{prop}    \label{p:properties of A}
(i)  If $f\in\cA_c^{cusp}$ then $Lf=f$.

(ii) For any $\varphi\in\C_c$ one has $L (\Eis\varphi)=-\Eis'\varphi$, where $\Eis'$ is the second Eisenstein operator defined in Subsection \ref{ss:second Eis}.
\end{prop}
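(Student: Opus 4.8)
The first assertion requires no work: by definition a function $f\in\cA_c^{cusp}$ satisfies $\CT f=0$, and since $L=1-\Eis'\circ\CT$ we immediately get $Lf=f-\Eis'(0)=f$. So the only content is in part (ii), and my plan there is a direct computation with the definitions, the sole external inputs being the identity $\CT\circ\Eis=1+M$ on $\Cp$ (Subsection~\ref{ss:CTEis}) and the invertibility of $M$ (Proposition~\ref{p:invertibility}).

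Concretely, I would start from $L=1-\Eis'\circ\CT$ and the fact that $\varphi\in\C_c\subset\Cp$, so that $\Eis\varphi\in\cA_c$ and hence $\CT(\Eis\varphi)$ is defined and lies in $\Cm$. The first step is to apply $\CT\circ\Eis=1+M$ to rewrite $\CT(\Eis\varphi)=\varphi+M\varphi$. Next, unfolding the definition $\Eis'=\Eis\circ M^{-1}$, the term $\Eis'(\varphi+M\varphi)$ becomes $\Eis\bigl(M^{-1}(\varphi+M\varphi)\bigr)$. The crucial simplification is that $M^{-1}$ is a genuine two-sided inverse, so $M^{-1}(\varphi+M\varphi)=M^{-1}\varphi+\varphi$; applying $\Eis$ and using linearity gives $\Eis'(\varphi+M\varphi)=\Eis'\varphi+\Eis\varphi$. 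Substituting back,
\[
L(\Eis\varphi)=\Eis\varphi-\Eis'\bigl(\CT(\Eis\varphi)\bigr)=\Eis\varphi-(\Eis'\varphi+\Eis\varphi)=-\Eis'\varphi,
\]
which is the desired identity.

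The one point that deserves care — and which is really the only possible obstacle — is the bookkeeping of function spaces, namely checking that each intermediate function lies in the domain of the operator applied to it. The delicate case is $M\varphi$: even though $\varphi\in\C_c$, the function $M\varphi$ lies in $\Cm$ but need not be compactly supported. This is harmless because $\Eis'$ is defined on all of $\Cm$ and $M^{-1}$ maps $\Cm$ to $\Cp$, so every expression above is legitimate and the two appearances of $\Eis\varphi$ cancel cleanly. I expect no genuine difficulty here; the proposition is an algebraic consequence of the definitions of $L$, $\Eis'$, and the relation $\CT\circ\Eis=1+M$.
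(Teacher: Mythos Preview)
Your proof is correct and matches the paper's own argument essentially verbatim: the paper writes the same computation as an operator identity, $L\circ\Eis=\Eis-\Eis\circ M^{-1}\circ(1+M)=-\Eis\circ M^{-1}=-\Eis'$, while you apply it to a fixed $\varphi$. Your remarks on domain bookkeeping are a useful addition but not needed for the argument.
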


\begin{proof}
(i) Cuspidality means that $\CT f=0$. In this case $Lf=f$ by the definition of~$L$.

(ii) Since $\CT\circ\Eis=1+M$ the composition $L\circ\Eis :\C_c\to\cA$ equals
$$\Eis-\Eis\circ M^{-1}\circ (1+M)=-\Eis\circ M^{-1}=-\Eis',$$
and we are done.
\end{proof}

\begin{rem}
If $F$ is a function field then $\cA_c=\cA_c^{cusp}\oplus\Eis(\C_c)$, so the operator $L:\cA_c\to\cA$ is uniquely characterized by properties (i)-(ii) from \propref{p:properties of A}. 
 \end{rem}
 
 \begin{rem}
If $F$ is a number field the previous remark does not apply  because the space $\cA_c^{cusp}$ is too small (possibly zero). As already said in Subsect.~\ref{ss:apology}, in the number field case it would be reasonable to replace $\cA_c$ by a bigger space and to extend the form $\B$ to the bigger space by continuity. But this is beyond the scope of this article. 
 \end{rem}
 
 Later we will show that the operator $L$ is injective and describe the inverse operator $\im L\to\cA_c$ (see \corref {c:A} and \propref{p:A}). In the case that $F$ is a function field we will also describe $\im L$ explicitly (see \corref {c:A} and Definition~\ref{d:Apsc}).

\subsection{Invariance of the form $\B$}
\subsubsection{The case that $F$ is a function field}
In this case the group $G(\bA )$ acts on $\cA$ and $\cA_c\,$. The operator $L$ clearly commutes with this action, so the form $\B$ is $G(\bA )$-invariant.

\subsubsection{General case}
 Now let $F$ be an arbitrary global field. Let $\cH_0$ denote the space of compactly supported distributions on $G(\bA)$ that are $K$-finite with respect to both left and right translations. Let $\cH$ denote the space of compactly supported distributions $\eta$ on $G(\bA )$ such that $\eta*\cH_0\subset\cH_0$ and $\cH_0*\eta\subset\cH_0\,$. Then $\cH$ is a unital associative algebra and $\cH_0$ is an ideal in $\cH\,$. The anti-automorphism of $G(\bA )$ defined by $g\mapsto g^{-1}$ induces an anti-automorphism of the algebra $\cH$, denoted by $\eta\mapsto\eta^{\star}$. It preserves $\cH_0\,$, and its square equals $\id_{\cH}\,$.
 
 The algebra $\cH$ acts on $\cA$ and $\cA_c$. The operator $L:\cA_c\to\cA$ commutes with the action of $\cH$ (because this is true for each of the operators $\Eis$, $M$, and $\CT$).
 
The form $\B_{naive}$ is invariant in the following sense:
 \[
 \B_{naive} (\eta*f_1,f_2)= \B_{naive} (f_1\, ,\eta^{\star}*f_2),\quad\quad \eta\in\cH\, , f_i\in\cA_c\, .
 \]
Since $L:\cA_c\to\cA$ commutes with the action of $\cH$ the form $\B$ has a similar invariance property:
 \begin{equation}     \label{e:invarince of B}
 \B (\eta*f_1,f_2)= \B (f_1\, ,\eta^{\star}*f_2),\quad\quad \eta\in\cH\, , f_i\in\cA_c\, .
 \end{equation}

\subsection{$\B$ is not positive definite}   \label{ss:not positive}
Suppose that the field $E$ from Subsect.~\ref{sss:E} equals~$\bR$. 
Then one can ask whether the form $\B$ is positive definite.  

If $F$ is a function field the situation is as follows. First of all,  the restriction of $\B$ to $\cA_c^{cusp}$ is positive definite (because it equals the restriction of $\B_{naive}\,$). On the other hand, the restriction of $\B$ to $\Ker (1+L)$ is \emph{negative} definite by formula~\eqref{e:B and A}. 
Let us prove that if $F$ is a function field then
\begin{equation}   \label{e:ker nonzero}
\Ker (\cA_c\overset{1+L}\longrightarrow\cA )\ne 0.
\end{equation}
(In fact, one can prove the following stronger\footnote{This is stronger than \eqref{e:ker nonzero} because the representation of $G(\bA )$ in 
$\cA_c$ is not admissible.} statement:  the representation of $G(\bA )$ in  $(1+L)(\cA_c)$  is admissible.)

By \propref{p:properties of A}(ii), $(1+L)\circ\Eis=\Eis-\Eis'$, so to prove \eqref{e:ker nonzero}, it suffices to show that
\begin{equation}   \label{e:not contained}
\Ker (\C_c\overset{\Eis-\Eis'}\longrightarrow\cA )\not\subset\Ker (\C_c\overset{\Eis}\longrightarrow\cA_c ).
\end{equation}
This follows from the next two lemmas.

\begin{lem}  \label{l:2KerEis}
If $F$ is a function field then $\Ker (\C_c\overset{\Eis}\longrightarrow\cA_c )\ne 0$.
\end{lem}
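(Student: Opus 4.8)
The statement to prove is $\Ker(\C_c \xrightarrow{\Eis} \cA_c) \ne 0$ when $F$ is a function field. By \lemref{l:KerEis}, an element $h \in \C_c$ lies in this kernel precisely when $Mh = -h$, so the plan is to exhibit a nonzero compactly supported function $h$ on $G(\bA)/T(F)N(\bA)$ satisfying $Mh = -h$.

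\medskip

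The strategy is to reduce to $K$-invariants and work locally. Using \propref{p:invertibility} and the fact (from \secref{s:K-invariants}, \propref{p:Mellin}) that on $K$-invariants $M^K$ acts through the Mellin-type multiplier $\zeta_F(s)/\zeta_F(s-1)$, I would look for eigenfunctions of $M$ with eigenvalue $-1$. Concretely, in the function field setting Example~\ref{ex:functional} identifies $\C^K$ with functions on $\Pic X$ and the degree map with $\deg$; the operator $M^K$ becomes an explicit convolution/multiplier operator whose ``symbol'' is the rational function $\zeta_F(s)/\zeta_F(s-1)$ in the variable $q^{-s}$. First I would analyze when this symbol equals $-1$: since $\zeta_F(s) = P(q^{-s})/[(1-q^{-s})(1-q^{1-s})]$ for a polynomial $P$ coming from the zeta function of the curve, the equation $\zeta_F(s)/\zeta_F(s-1) = -1$ is a polynomial equation in $q^{-s}$ with finitely many roots, and the existence of a root with the correct support properties yields a finitely-supported solution $h$ on $\Pic X$, i.e.\ a nonzero element of $\C_c^K$ with $Mh = -h$.

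\medskip

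An alternative, cleaner route avoids explicit root-finding and is the one I would actually carry out. The relation \eqref{e:anticommutes} says $M(t \star f) = t^{-1} \star Mf$, so $M$ anticommutes with the $\bA^\times/F^\times$-action up to inversion; combined with invertibility of $M$ (\propref{p:invertibility}) and the fact that $M$ maps $\C_{\ge R}$ into $\C_{\le -R}$, a symmetry/fixed-point argument produces an eigenvector. Specifically, consider the operator $M$ restricted to a suitable finite-dimensional space of $K$-invariant functions supported in a bounded range of degrees that is preserved by the relevant truncation of $M$; on such a space $M$ acts as an explicit finite matrix coming from the multiplier $\zeta_F(s)/\zeta_F(s-1)$, and one checks directly that $-1$ is an eigenvalue. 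The key computational input is that the generating series $\sum_{n \ge 0} r_n q^{-ns} = \zeta_{F_x}(s)/\zeta_{F_x}(s-1)$ from \remref{r:hope}, together with its global analog, forces the symbol of $M^K$ to take the value $-1$ somewhere in the allowed range.

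\medskip

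The main obstacle I anticipate is making precise the passage from the Mellin-symbol description of $M$ to an honest \emph{compactly supported} eigenfunction: one must ensure the solution $h$ genuinely lies in $\C_c$ (finite support on $\Pic X$), not merely in $\C_+$ or $\C_-$, and that it is nonzero. This is where the degree-shifting property $M(\C_{\ge R}) \subset \C_{\le -R}$ and the explicit rationality of $\zeta_F(s)/\zeta_F(s-1)$ must be combined carefully; the finiteness of $\Pic^0 X$ (so that each degree-component of $\C^K$ is finite-dimensional) is what ultimately guarantees that an eigenvalue $-1$ of the finite matrix gives a genuine element of $\C_c^K$. I expect the actual existence of the eigenvalue $-1$ to follow from the functional equation $\zeta_F(s) = \epsilon \cdot q^{(g-1)(2s-1)}\zeta_F(1-s)$, which relates the symbol at $s$ and at $1-s$ and pins down a value forcing the $-1$ eigenvalue.
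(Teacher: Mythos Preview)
Your proposal has a genuine gap: the Mellin-symbol approach does not produce \emph{compactly supported} eigenfunctions. If the symbol $\zeta_F(s)/\zeta_F(s-1)$ equals $-1$ at some $s_0$, the corresponding eigenfunction of $M^K$ is the character $\cM\mapsto q^{-s_0\deg\cM}$ (times a character of $\Pic^0 X$), which is supported on all of $\Pic X$, not finitely supported. So a root of the symbol equation gives nothing in $\C_c^K$. Your fallback ``restrict $M$ to a finite-dimensional space preserved by a truncation of $M$'' is not made precise, and in fact no such natural invariant subspace exists: $M$ sends $\C_{\ge R}$ into $\C_{\le -R}$, so it flips rather than preserves degree ranges. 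The sentence ``the finiteness of $\Pic^0 X$ \ldots\ guarantees that an eigenvalue $-1$ of the finite matrix gives a genuine element of $\C_c^K$'' is the crux, and it is exactly what you have not established.

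The paper's proof avoids all of this with a direct dimension count. One restricts $\Eis$ to the finite-dimensional space $\C_{[-\cN,\cN]}^K$ of $K$-invariant functions supported in degrees $[-\cN,\cN]$, whose dimension is $(2\cN+1)\cdot|\Pic^0 X|$. Using reduction theory (Propositions~\ref{p:deg+deg} and \ref{p:Minkowski}), the image $\Eis(\C_{[-\cN,\cN]}^K)$ is contained in the space of functions supported on the image of $G(\bA)_{[-R,\cN]}/B(F)$ in $G(\bA)/G(F)$ for a fixed $R$; counting points gives $\dim\Eis(\C_{[-\cN,\cN]}^K)\le \cN\cdot|\Pic^0 X|+c$ for a constant $c$ independent of $\cN$. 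For $\cN$ large the source dimension exceeds the target dimension, so $\Eis$ has nontrivial kernel. No spectral analysis of $M$ is needed at all.
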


\begin{proof}
Given integers $a\le b$, let $G(\bA )_{[a,b ]}$ denote the set of all $x\in G(\bA )$ such that $a\le\deg x\le\ b$, where 
$\deg :K\backslash G(\bA )/T(F)N(\bA)\to\bZ$ is the map defined in Subsect.~\ref{ss:adelic norm}. For any integer $\cN\ge 0$, let $\C_{[-\cN,\cN]}\subset\C$ denote the set of all functions $f\in\C$ whose support is contained in $G(\bA )_{[-\cN ,\cN ]}/T(F)N(\bA )$. To prove the lemma, it suffices to show that
$\dim\Eis (\C_{[-\cN,\cN]}^K)<\dim\C_{[-\cN,\cN]}^K$ for $\cN$ big enough. It is easy to see that
\[
\dim\C_{[-\cN,\cN]}^K=|K\backslash G(\bA )_{[-\cN ,\cN ]}/T(F)N(\bA)|=(2\cN+1)\cdot |\Pic^0X|.
\]
So it suffices to prove that
\begin{equation}   \label{e:desired estimate}
\dim\Eis (\C_{[-\cN,\cN]}^K)\le \cN \cdot |\Pic^0X|+c
\end{equation}
for some $c$ independent of $\cN$. 

For any $f\in\C_{[-\cN,\cN]}$ the support of $\Eis (f)$ is contained in the image of $G(\bA )_{[-\cN ,\cN ]}/B(F)$ in $G(\bA )/G(F)$.
By Propositions~\ref{p:Minkowski}  and \ref{p:deg+deg},
\[
\im (G(\bA )_{[-\cN ,\cN ]}/B(F)\to G(\bA )/G(F))=\im (G(\bA )_{[-R,\cN ]}/B(F)\to G(\bA )/G(F)),
\]
where $R=R(F)$ is the number from Proposition~\ref{p:Minkowski}. So
\begin{equation}   \label{e:easy estimate}
\dim\Eis (\C_{[-\cN,\cN]}^K)\le |K\backslash G(\bA )_{[-R,\cN ]}/B(F)|. 
\end{equation}
On the other hand, it is easy to check and well known that if $g$ is the genus of $F$ then for any $\cN\ge g$ the map
\[
K\backslash G(\bA )_{[g,\cN ]}/B(F)\to K\backslash G(\bA )_{[g,\cN ]}/T(F)N(\bA )
\]
is bijective, so $|K\backslash G(\bA )_{[g,\cN ]}/B(F)|=(\cN-g+1)\cdot |\Pic^0X|$. Combining this with \eqref{e:easy estimate}, we get 
\eqref{e:desired estimate}.
 \end{proof}

\begin{lem}    \label{l:t star f}
Let $f\in\Ker (\C_c\overset{\Eis}\longrightarrow\cA_c )$ and $t\in\bA^{\times}/F^{\times}$. Define $t\star f\in\cA_c$ by formula \eqref{e:t star f}.
Then

(i) $t\star f\in\Ker (\C_c\overset{\Eis-\Eis'}\longrightarrow\cA )$;

(ii) if $\Eis (t\star f)=0$ and $\deg t\ne 0$ then $f=0$.
\end{lem}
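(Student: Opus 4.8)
The plan is to reduce both parts to \lemref{l:KerEis} (which says that for $h\in\C_c$ one has $\Eis h=0$ iff $Mh=-h$), combined with the anticommutation relation \eqref{e:anticommutes} and its analogue for $M^{-1}$ (which exists by \propref{p:invertibility}). First I would record the inputs. Since $f\in\Ker(\C_c\overset{\Eis}{\longrightarrow}\cA_c)$, \lemref{l:KerEis} gives $Mf=-f$, and applying $M^{-1}$ yields $M^{-1}f=-f$ as well. Moreover \eqref{e:anticommutes} formally implies the identity $M^{-1}(t\star h)=t^{-1}\star M^{-1}h$ (rewrite \eqref{e:anticommutes} as $M^{-1}(t^{-1}\star Mh)=t\star h$ and substitute $h\mapsto M^{-1}h$). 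Both $t\star f$ and $t^{-1}\star f$ lie in $\C_c$ because the $\star$-action \eqref{e:t star f} preserves compact support.

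For part (i) I would compute, using $\Eis'=\Eis\circ M^{-1}$ and linearity, that
$(\Eis-\Eis')(t\star f)=\Eis\bigl((1-M^{-1})(t\star f)\bigr)$. The anticommutation identity for $M^{-1}$ gives $M^{-1}(t\star f)=t^{-1}\star M^{-1}f=-(t^{-1}\star f)$, so the argument of $\Eis$ becomes $g:=t\star f+t^{-1}\star f\in\C_c$. It then remains to verify $Mg=-g$ and invoke \lemref{l:KerEis}. This verification is immediate from \eqref{e:anticommutes}: one has $M(t\star f)=t^{-1}\star Mf=-(t^{-1}\star f)$ and $M(t^{-1}\star f)=t\star Mf=-(t\star f)$, whose sum is $-g$. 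Hence $\Eis g=0$ and $t\star f\in\Ker(\Eis-\Eis')$.

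For part (ii) I would argue as follows. The extra hypothesis $\Eis(t\star f)=0$ together with \lemref{l:KerEis} gives $M(t\star f)=-(t\star f)$, while \eqref{e:anticommutes} gives $M(t\star f)=t^{-1}\star Mf=-(t^{-1}\star f)$. Comparing, $t\star f=t^{-1}\star f$, and applying $t\star(-)$ (using that $\star$ is an action) yields $t^2\star f=f$. Since $\deg t\ne 0$ we have $\deg(t^2)=2\deg t\ne 0$, so it suffices to prove: if $s\star f=f$ with $\deg s\ne 0$ and $f\in\C_c$, then $f=0$. For this I would use that right translation by $\diag(s^{-1},s)$ shifts the degree map of Subsect.~\ref{ss:adelic norm} by a nonzero amount (namely $\deg(x\cdot\diag(s^{-1},s))=\deg x-\deg s$), so the relation $s\star f=f$ forces $\supp f$ to be stable under a nontrivial degree-translation; iterating, if $f$ did not vanish its support would contain points of arbitrarily large degree, contradicting the fact that $f$ has compact support and that the degree map \eqref{e:adelic norm} is proper. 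Hence $f=0$.

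The only non-formal ingredient is this last support argument in (ii); everything else is bookkeeping with $Mf=-f$, the relation \eqref{e:anticommutes}, and \lemref{l:KerEis}. I expect the main point to watch is precisely the degree computation for right translation by the torus, i.e.\ checking that the $\star$-action by an element of nonzero degree genuinely translates the (bounded) degree-support of a compactly supported function, which is what makes compactness incompatible with $s\star f=f$.
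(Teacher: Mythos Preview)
Your proof is correct and follows essentially the same route as the paper's own argument: both parts are reduced to \lemref{l:KerEis} via the anticommutation relation \eqref{e:anticommutes} (and its consequence for $M^{-1}$), arriving at the same function $g=t\star f+t^{-1}\star f$ in part~(i) and the same support-stability argument in part~(ii). Your write-up is slightly more explicit (you spell out the derivation of $M^{-1}(t\star h)=t^{-1}\star M^{-1}h$ and the degree-shift computation), but there is no substantive difference in method.
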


\begin{proof}
By \lemref{l:KerEis}, $Mf=-f$. Recall that $M(t\star f)=t^{-1}\star Mf$. So
\begin{equation}   \label{e:Mtf}
M(t\star f)=-t^{-1}\star f.
\end{equation}

Let us prove statement (i). We have
\[
(\Eis -\Eis')(t\star f)=\Eis(t\star f)-\Eis\circ M^{-1} (t\star f)=\Eis(t\star f-t^{-1}\star M^{-1}f)=\Eis (t\star f+t^{-1}\star f).
\]
By \lemref{l:KerEis}, to show that $\Eis (t\star f+t^{-1}\star f)=0$,  it suffices to prove that $M(t\star f+t^{-1}\star f)=-(t\star f+t^{-1}\star f)$. This follows from formula~\eqref{e:Mtf} and a similar equality $M(t^{-1}\star f)=-t\star f$.

Let us prove statement (ii). By \lemref{l:KerEis}, if $\Eis (t\star f)=0$ then $M(t\star f)=-t\star f$. By \eqref{e:Mtf}, this means that 
$t\star f=t^{-1}\star f$. So the subset $\Supp f\subset G(\bA )/T(F)N(\bA )$ is stable under right multiplication by $\diag (t^2 ,t^{-2})$. But $\Supp f$ is compact. So if $\deg t\ne 0$ then $\Supp f=\emptyset$ and $f=0$.
\end{proof}

\section{The space $\cA_{ps-c}$  of `pseudo compactly supported' functions}   \label{s:ps-c}
In this section we define a subspace $\cA_{ps-c}\subset\cA$. In the case that $F$ is a function field we prove that $L$ induces an isomorphism $\cA_c\iso\cA_{ps-c}\,$; we also compute the inverse isomorphism, see formula~\eqref{e:Ainverse}. This formula is simpler than the formula for $L$ itself: it does not involve $M^{-1}$.

Using this isomorphism and the bilinear form $B$ on $\cA_c$ one gets a bilinear form on $\cA_{ps-c}$ in the function field case. \propref{p:form on ps-c} gives a simple explicit formula for the form on $\cA_{ps-c}\,$, which does not involve $M^{-1}$.

\subsection{The space $\cA_{\circ}$\,}   \label{ss:Acirc}
\begin{defin}  \label{d:Acirc}
$\cA_{\circ}$ is the space of all functions $f\in\cA$ such that $\CT (f)\in\Cm\,$.
\end{defin}

\begin{lem}   \label{l:Acirc}
(i) $\cA_{\circ}\supset\cA_c\,$.

(ii) If $F$ is a function field then $\cA_{\circ}=\cA_c\,$.
\end{lem}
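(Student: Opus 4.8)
Statement (i) is immediate from Definition~\ref{d:Acirc}: the inclusion $\cA_c\subset\cA_\circ$ says exactly that $\CT(\cA_c)\subset\Cm$, which is the fact recorded in Subsect.~\ref{ss:CT} (a consequence of \propref{p:deg+deg}). Thus the whole content of the lemma is the reverse inclusion $\cA_\circ\subset\cA_c$ in the function field case, and this is what I would prove.

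The plan is to argue by reduction theory. For $x\in G(\bA)/G(F)$ let $d(x)$ denote the largest degree of a reduction $\tilde x\in G(\bA)/B(F)$ lying over $x$ (the Harder--Narasimhan instability); by \propref{p:Minkowski} one has $d(x)\ge -R(F)$, and by \propref{p:deg+deg} the maximizing reduction is unique whenever $d(x)>0$. For any $N$ the locus $\{x : d(x)\le N\}$ is the preimage, under $G(\bA)/G(F)\to K\backslash G(\bA)/G(F)=\Bun_G(\bF_q)$, of the set of bundles whose instability lies in $[-R(F),N]$; this set of bundles is finite (each integer value of the instability contributes finitely many bundles, exactly the finiteness underlying the count $|K\backslash G(\bA)_{[g,\cN]}/B(F)|=(\cN-g+1)\,|\Pic^0 X|$ used in the proof of \lemref{l:2KerEis}), and since $K$ is compact each fibre of the above map is compact. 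Hence $\{x: d(x)\le N\}$ is compact, and it suffices to find $N$ with $f(x)=0$ whenever $d(x)>N$; then $\Supp f\subset\{x: d(x)\le N\}$ and $f\in\cA_c$.

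Now let $f\in\cA_\circ$, fix $R$ with $\CT(f)\in\C_{\le R}$, and let $x$ have $d:=d(x)$ large. Write the unique maximal reduction as $\tilde x=k\cdot\diag(\alpha,\alpha^{-1})$ with $k\in K$ and $\deg\alpha=d$, and let $\bar x\in G(\bA)/T(F)N(\bA)$ be its image, so $\deg\bar x=d$. In the dictionary of Examples~\ref{ex:functional} and~\ref{ex:CTfunctional}, $\bar x$ is a line bundle $\cL$ of degree $d$ and $x$ is an extension of $\cL^{-1}$ by $\cL$; since $\deg\cL^2=2d>2g-2$ one has $\Ext(\cL^{-1},\cL)=H^1(X,\cL^2)=0$, which adelically is the statement $F+\alpha^{-2}O_{\bA}=\bA$. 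Using $\diag(\alpha,\alpha^{-1})\,n=n'\,\diag(\alpha,\alpha^{-1})$, where $n'$ has upper entry $\alpha^2$ times that of $n$, this shows that every class in $N(\bA)/N(F)$ has a representative $n$ with $n'\in N(O_{\bA})\subset K$. If $f$ is $K$-invariant this forces $f(\tilde x n)=f(k n'\diag(\alpha,\alpha^{-1}))=f(\diag(\alpha,\alpha^{-1}))=f(\tilde x)$ for all such representatives, whence $\CT(f)(\bar x)=f(x)$; as $\deg\bar x=d>R$ gives $\CT(f)(\bar x)=0$, we get $f(x)=0$ for $d(x)>\max(R,g-1)$, completing the $K$-invariant case with $N=\max(R,g-1)$.

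The hard part will be the general, merely $K$-finite (and $C^\infty$) case. Smoothness supplies an open compact subgroup $U\subset G(\bA)$ under which $f$ is right invariant, and the substitution above shows that $n\mapsto f(\tilde x n)$ is invariant under $N(\bA)\cap U$; hence $\CT(f)(\bar x)$ is a finite average $|Q|^{-1}\sum_{n\in Q}f(\tilde x n)$ over the fixed finite set $Q=N(\bA)/N(F)(N(\bA)\cap U)$, and its vanishing no longer yields $f(x)=0$ on the nose. The crux is to upgrade the vanishing of these averages --- taken over all reductions $\bar x$ over $\cL$, i.e. as $\tilde x$ ranges through its left $K$-orbit --- to the pointwise vanishing of $f$ over $\cL\oplus\cL^{-1}$. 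I would do this by exploiting that the unipotent automorphism group $H^0(X,\cL^2)$ of the very unstable bundle $\cL\oplus\cL^{-1}$ grows with $d$: for $d$ large relative to the level $U$ and the $K$-type of $f$, the constant term along the maximal reduction is injective on the finite-dimensional space of functions of the given type supported at instability $d$. This injectivity --- the assertion that deep in the unstable range the constant term detects the function --- is the step I expect to be the main obstacle.
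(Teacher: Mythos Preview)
Your treatment of (i) is correct and matches the paper. For (ii) the paper gives essentially no argument of its own: it simply observes that the standard proof (\cite[Prop.~10.4]{JL}) that cusp forms are compactly supported over a function field actually establishes the stronger inclusion $\cA_\circ\subset\cA_c$. What you have written is precisely an attempt to spell out that standard argument, and your $K$-invariant case is done correctly.

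The gap is in your passage to the general $K$-finite case, and it is a slip rather than a genuine obstacle. In the paper's convention (functions on $G(\bA)/G(F)$, $K$ acting on the left) smoothness over a function field yields \emph{left} invariance of $f$ under some open subgroup $U\subset K$, not right invariance. Your deduction that $n\mapsto f(\tilde x n)$ is $(N(\bA)\cap U)$-invariant uses the wrong side, and the ``finite average over $Q$'' picture together with the ensuing ``main obstacle'' is an artifact of this. With the correct left $U$-invariance (take $U$ normal in $K$, harmlessly) your own conjugation trick from the $K$-invariant paragraph goes through unchanged, only with a larger threshold depending on $U$: writing $\tilde x=kt$ and $ktn=(kn'k^{-1})\,kt$ with $n'=tnt^{-1}$, one needs a representative $n$ of each class in $N(\bA)/N(F)$ with $n'\in U$, i.e.\ with upper-right entry of $n$ lying in $\alpha^{-2}\mathfrak a$, where $\mathfrak a$ is determined by $U\cap N(\bA)$. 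The condition $F+\alpha^{-2}\mathfrak a=\bA$ holds once $d>g_X-1+\tfrac12\deg D$ when $U$ contains the level-$D$ principal congruence subgroup; this is exactly the constant $\cN_0(U)$ appearing in \lemref{l:truncation}. For such $d$ the map $n\mapsto f(\tilde x n)$ is \emph{constant} on $N(\bA)/N(F)$, so $\CT(f)(\bar x)=f(x)$ and hence $f(x)=0$ for $d(x)>\max(R,\cN_0(U))$. No separate ``injectivity of the constant term'' step is needed.
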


\begin{proof}
We know that $\CT (\cA_c)\subset\Cm\,$. This is equivalent to (i).

It is well known that if $F$ is a function field then the kernel of $\CT:\cA\to\C$ (also known as the space of cusp forms) is contained in $\cA_c\,$. The usual proof of this statement (e.g., see \cite[Prop.~10.4]{JL}), in fact, proves the inclusion $\cA_{\circ}\subset\cA_c\,$. 
\end{proof}

\subsection{The space $\cA_{ps-c}\,$}  \label{ss:Aps-c}
Similarly to Definition~\ref{d:Acirc}, let us introduce the following one.
\begin{defin}  \label{d:Apsc}
$\cA_{ps-c}$ is the space of all functions $f\in\cA$ such that $\CT (f)\in\Cp\,$.
\end{defin}

Here `ps' stands for `pseudo'.

\subsection{The isomorphism $L:\cA_{\circ}\iso\cA_{ps-c}\,$} 
In Subsection~\ref{ss:Acirc} we defined a subspace $\cA_{\circ}\subset\cA$ containing $\cA_c\,$, which in the function field case is  equal to $\cA_c\,$. 
In Subsection~\ref{ss:operator A} we defined an operator $L:\cA_c\to\cA\,$ by the formula $$L:=1-\Eis\circ M^{-1}\circ\CT=1-\Eis'\circ \CT.$$ 
In fact, the same formula defines an operator
$\cA_{\circ}\to\cA$\,. By a slight abuse of notation, we will still denote it by $L$.

\begin{prop}  \label{p:A}
(i) For any $f\in\cA_{\circ}$ one has $Lf\in\cA_{ps-c}\,$.

(ii) The operator $L:\cA_{\circ}\to\cA_{ps-c}$ is invertible. For $g\in\cA_{ps-c}$ one has
\begin{equation}   \label{e:Ainverse}
L^{-1}g=g-\Eis\circ\CT (g).
\end{equation}
\end{prop}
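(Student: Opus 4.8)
The plan is to prove both statements by direct computation, using the formula $\CT\circ\Eis=1+M$ from Subsection~\ref{ss:CTEis} as the main tool. The key observation is that applying $\CT$ to $Lf$ converts the problematic operator $M^{-1}$ into the harmless operator $M$, which is what makes everything work out. Throughout I would freely use that $M$ sends $\Cp$ to $\Cm$ and $M^{-1}$ sends $\Cm$ to $\Cp$ (from \propref{p:invertibility}), together with the mapping properties $\CT(\cA_c)\subset\Cm$ and $\Eis(\Cp)\subset\cA$.

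For part (i), I would compute $\CT(Lf)$ directly. Since $L=1-\Eis\circ M^{-1}\circ\CT$, for $f\in\cA_{\circ}$ we have $\CT(f)\in\Cm$, so $M^{-1}\CT(f)\in\Cp$, and then
\[
\CT(Lf)=\CT(f)-\CT\circ\Eis\circ M^{-1}\circ\CT(f)=\CT(f)-(1+M)M^{-1}\CT(f).
\]
Expanding $(1+M)M^{-1}=M^{-1}+1$, the $\CT(f)$ terms cancel and we are left with $\CT(Lf)=-M^{-1}\CT(f)$, which lies in $\Cp$. By Definition~\ref{d:Apsc} this exactly says $Lf\in\cA_{ps-c}\,$, proving (i).

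For part (ii), I would first verify that the proposed inverse $g\mapsto g-\Eis\circ\CT(g)$ makes sense on $\cA_{ps-c}$: if $g\in\cA_{ps-c}$ then $\CT(g)\in\Cp$, so $\Eis\circ\CT(g)$ is defined. Then I would check the two composites. For $g\in\cA_{ps-c}$, applying $L$ to $g-\Eis\CT(g)$ and using $\CT\circ\Eis=1+M$ to simplify $\CT(g-\Eis\CT(g))=\CT(g)-(1+M)\CT(g)=-M\CT(g)$, the operator $\Eis\circ M^{-1}$ hits $-M\CT(g)$ and produces $+\Eis\CT(g)$, which recombines to give back $g$; the computation for the other composite $L^{-1}\circ L$ is entirely parallel, again with the $M$ and $M^{-1}$ cancelling through the identity $\CT\circ\Eis=1+M$. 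I would also note that these computations implicitly show $L^{-1}$ maps $\cA_{ps-c}$ back into $\cA_{\circ}$, since $\CT(L^{-1}g)=-M\CT(g)\in\Cm$.

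The main obstacle is not conceptual but bookkeeping: one must be careful that each intermediate expression lands in a space where the next operator is actually defined, since $M^{-1}$ is only available on $\Cm$ and $\Eis$ only on $\Cp$. The defining conditions of $\cA_{\circ}$ and $\cA_{ps-c}$ ($\CT$-image in $\Cm$ resp. $\Cp$) are precisely engineered so that every composite in the argument is legitimate, and the genuine content of the proposition is that these two conditions are interchanged by $L$. Once the domain-tracking is handled, both the identity $\CT\circ\Eis=1+M$ and the invertibility of $M$ do all the algebraic work.
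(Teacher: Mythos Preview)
Your proposal is correct and follows essentially the same approach as the paper's proof: compute $\CT(Lf)=-M^{-1}\CT(f)$ and $\CT(L'g)=-M\CT(g)$ using $\CT\circ\Eis=1+M$, then verify the two composites reduce to the identity. The only cosmetic difference is that the paper phrases part (i) via the identity $\CT\circ\Eis'=1+M^{-1}$ from \remref{r:CTEis'}, which is equivalent to your direct expansion of $(1+M)M^{-1}$.
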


Note that if $g\in\cA_{ps-c}$ then $\CT (g)\in\Cp$ by the definition of $\cA_{ps-c}\,$, so the expression $\Eis\circ\CT (g)$ makes sense.

\begin{proof}
(i) For any $f\in\cA_{\circ}$ one has the following identity in $\C\,$:
\begin{equation}    \label{e:1}
\CT (Lf)=\CT (f-\Eis'\circ \CT (f))=\CT (f)-(1+M^{-1})\circ\CT (f)=-M^{-1}\circ\CT (f)
\end{equation}
(the second equality follows from \remref{r:CTEis'}). So $\CT (Lf)\in\Cp\,$, which means that $Lf\in\cA_{ps-c}\,$.

(ii) For $g\in\cA_{ps-c}$ set $L'g:=g-\Eis\circ\CT (g)$. Then
\begin{equation}  \label{e:2}
\CT (L'g)=\CT (g)-(1+M)\circ\CT (g)=-M\circ\CT (g).
\end{equation}
In particular, $\CT (L'g)\in\Cm\,$, so $L'g\in\cA_{\circ}\,$. Thus $L'$ is an operator $\cA_{ps-c}\to\cA_{\circ}\,$.

Let us now check that $L'$ is inverse to $L:\cA_{\circ}\to\cA_{ps-c}\,$.

For any $g\in\cA_{ps-c}$ one has 
\[
LL'g=L'g-\Eis\circ M^{-1}\circ\CT (L'g),
\]
so formula \eqref{e:2} implies that $LL'g=g$.

For any $f\in\cA_{\circ}$ one has $L'Lf=Lf-\Eis\circ\CT (Lf)$, so formula \eqref{e:1} implies that $L'Lf=Lf+\Eis\circ M^{-1}\circ\CT (f)=f$.
\end{proof}

\begin{cor}    \label{c:A}
The map $L:\cA_c\to\cA$ is injective, and $L (\cA_c)\subset\cA_{ps-c}\,$. If $F$ is a function field then $L$ induces an isomorphism  
$\cA_c\iso\cA_{ps-c}\,$, whose inverse is given by formula~\eqref{e:Ainverse}.
\end{cor}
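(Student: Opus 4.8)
The plan is to deduce Corollary~\ref{c:A} directly from \propref{p:A} together with \lemref{l:Acirc}, so that almost no new work is required. First I would observe that the inclusion $\cA_c\subset\cA_\circ$ from \lemref{l:Acirc}(i) lets me restrict the operator $L:\cA_\circ\to\cA_{ps-c}$ (constructed in \propref{p:A}) to the subspace $\cA_c$; since $\propref{p:A}$(i) gives $L(\cA_\circ)\subset\cA_{ps-c}$, we immediately get $L(\cA_c)\subset\cA_{ps-c}$. For injectivity of $L$ on $\cA_c$, I would note that $L:\cA_\circ\to\cA_{ps-c}$ is invertible by \propref{p:A}(ii), hence injective, and injectivity is inherited by the restriction to the subspace $\cA_c\subset\cA_\circ$. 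This handles the two assertions valid for an arbitrary global field.

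For the function field case, the key input is \lemref{l:Acirc}(ii), which tells us that $\cA_\circ=\cA_c$ when $F$ is a function field. Substituting this equality into the isomorphism $L:\cA_\circ\iso\cA_{ps-c}$ from \propref{p:A}(ii) yields the desired isomorphism $L:\cA_c\iso\cA_{ps-c}$. The formula for the inverse is then precisely formula~\eqref{e:Ainverse} of \propref{p:A}(ii), namely $L^{-1}g=g-\Eis\circ\CT(g)$, which I would simply quote.

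Thus the proof is essentially a two-line bookkeeping argument, and I do not anticipate any genuine obstacle: all the content has been front-loaded into \propref{p:A} and \lemref{l:Acirc}. The one point deserving a moment's care is to make sure the \emph{same} operator $L$ is meant throughout — the original $L:\cA_c\to\cA$ of Subsection~\ref{ss:operator A} and its extension $L:\cA_\circ\to\cA$ are given by the identical formula $1-\Eis'\circ\CT$, so there is no ambiguity, and the restriction of the extended operator to $\cA_c$ agrees with the original. Once this identification is noted, the corollary follows formally.
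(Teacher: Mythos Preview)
Your proposal is correct and follows essentially the same approach as the paper, which simply says ``Use \propref{p:A} and \lemref{l:Acirc}(ii).'' You have spelled out the details that the paper leaves implicit, including the (trivial) use of \lemref{l:Acirc}(i) to ensure $\cA_c\subset\cA_\circ$, and your remark about the consistency of the two definitions of $L$ is a helpful clarification.
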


\begin{proof}
Use \propref{p:A} and \lemref{l:Acirc}(ii).
\end{proof}

\begin{prop}    \label{p:Eis'(C_c)}
The operator $\Eis':\Cm\to\cA$ maps $\C_c$ to $\cA_{ps-c}\,$.
\end{prop}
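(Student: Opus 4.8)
The plan is to reduce the claim directly to the behavior of $M^{-1}$ on the filtration of $\C$, using the already-computed formula for $\CT\circ\Eis'$. By Definition~\ref{d:Apsc}, proving $\Eis'(\C_c)\subset\cA_{ps-c}$ is the same as proving that $\CT(\Eis'\varphi)\in\Cp$ for every $\varphi\in\C_c$; so the entire task is a computation of one constant term.

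First I would invoke \remref{r:CTEis'}, which identifies the composite $\CT\circ\Eis':\Cm\to\C$ with $1+M^{-1}$, where $1$ is the identity embedding $\Cm\mono\C$. Since $\C_c\subset\Cm$, the operator $\Eis'$ is defined on $\C_c$, and for $\varphi\in\C_c$ this gives the identity
\[
\CT(\Eis'\varphi)=\varphi+M^{-1}\varphi
\]
in $\C$.

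Next I would check that both summands lie in $\Cp$. The first term $\varphi$ lies in $\C_c\subset\Cp$ by definition of $\C_c$. For the second term, $\varphi\in\C_c\subset\Cm$, so by \propref{p:invertibility} (invertibility of $M:\Cp\to\Cm$) the element $M^{-1}\varphi$ is well defined and lies in $\Cp$. As $\Cp$ is a linear subspace, the sum $\varphi+M^{-1}\varphi$ lies in $\Cp$, which is precisely the condition $\Eis'\varphi\in\cA_{ps-c}$.

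There is no genuine obstacle here: the content of the proposition is entirely absorbed into the invertibility of $M$ together with the formula $\CT\circ\Eis'=1+M^{-1}$, so the argument is a two-line consequence of results established earlier in this section. The only point requiring attention is the bookkeeping that $M^{-1}$ maps $\Cm$ into $\Cp$ (not merely into $\C$); should one want the sharper information on where the support of $\CT(\Eis'\varphi)$ begins, one would feed the quantitative estimate \eqref{e:continuity of inverse} into the same computation, but this refinement is unnecessary for the bare statement.
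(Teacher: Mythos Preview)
Your argument is correct and coincides with the paper's own ``slightly more direct argument'': reduce to $\CT(\Eis'\varphi)\in\Cp$ via Definition~\ref{d:Apsc}, apply \remref{r:CTEis'} to get $\CT(\Eis'\varphi)=\varphi+M^{-1}\varphi$, and note that both summands lie in $\Cp$. The paper additionally remarks that the result follows immediately from Propositions~\ref{p:properties of A}(ii) and~\ref{p:A}(i), but the direct route you chose is exactly the one spelled out there.
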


\begin{proof}
This immediately follows from Propositions~\ref{p:properties of A}(ii) and \ref{p:A}(i). 

On the other hand, here is a slightly more direct argument. By \remref{r:CTEis'},
\[
\CT (\Eis'(\C_c))=(1+M^{-1})(\C_c)\subset \C_c+\Cp=\Cp\, ,
\]
and the inclusion $\CT (\Eis'(\C_c))\subset\Cp$ means that $\Eis'(\C_c)\subset \cA_{ps-c}$ (by the definition of~$\cA_{ps-c}$).
\end{proof}

\subsection{The bilinear form on $\cA_{ps-c}\,$}
Now let us assume that $F$ is a \emph{function field.} Then $L$ induces an  isomorphism  
$\cA_c\iso\cA_{ps-c}\,$ (see \corref{c:A}). So one has the bilinear form on $\cA_{ps-c}$ defined by
\begin{equation}
\B _{ps}(g_1,g_2):=\B (L^{-1}g_1,L^{-1}g_2), \quad\quad g_i\in\cA_{ps-c}\,.
\end{equation}
We will write a simple formula for $\B _{ps}$ (see \propref{p:form on ps-c} below). It involves certain truncation operators.

\subsubsection{Truncation operators}   \label{sss:truncation}
Let $\cN\in\bR$.

Given a function $h\in\C$, define $h^{\le\cN}\in\C$ as follows: if the degree of $x\in G(\bA )/T(F)N(\bA )$ is $\le\cN$ then $h^{\le\cN}(x):=h(x)$, otherwise $h^{\le\cN}(x):=0$. Set $h^{>\cN}:=h-h^{\le\cN}$.

Given a function  $h\in\cA$, define $h^{\le\cN}\in\cA$ as follows: if all pre-images of $x$ in $G(\bA )/B(F)$ have degree $\le\cN$
then $h^{\le\cN}(x):=h(x)$, otherwise $h^{\le\cN}(x):=0$. Set $h^{>\cN}:=h-h^{\le\cN}$.

\begin{lem}     \label{l:truncation}
(i) Let $U\subset K$ be an open subgroup. Then there exists a number $\cN_0=\cN_0(U)\ge 0$ such that each element of 
$U\backslash G(\bA )/T(F)N(\bA )$ of degree\footnote{The map $\deg :U\backslash G(\bA )/T(F)N(\bA )\to\bZ$ is well-defined because $U\subset K$.} \,$>\!\cN_0$ has a single pre-image in $U\backslash G(\bA )/B(F)$.

(ii) Let $\cN_0$ be as in statement (i). Let $h\in\cA^U$ be such that  
$h=h^{>\cN_0}$. Then $\Eis (\CT(h)^{>\cN_0})=h$.
\end{lem}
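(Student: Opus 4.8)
The plan is to prove (i) first and then deduce (ii) as a formal consequence, the degree inequality of \propref{p:deg+deg} being exactly what forces the Eisenstein sum in (ii) to have a single nonzero term. For (i), write $q$ for the projection $G(\bA)/B(F)\to G(\bA)/T(F)N(\bA)$ and, for a class $\xi\in U\backslash G(\bA)/T(F)N(\bA)$, choose an Iwasawa representative $x=\kappa\cdot\diag(\alpha,\alpha^{-1})$ with $\kappa\in K$ and $\deg\xi=\deg\alpha$. The fiber of the induced map $U\backslash G(\bA)/B(F)\to U\backslash G(\bA)/T(F)N(\bA)$ over $\xi$ is $\{UxnB(F):n\in N(\bA)\}$, so it is a single point precisely when $xn\in UxB(F)$ for every $n=n(s)$, $s\in\bA$. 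Conjugating by $t=\diag(\alpha,\alpha^{-1})$, using $tn(s-c)t^{-1}=n(\alpha^2(s-c))$ and $n(c)\in N(F)\subset B(F)$ for $c\in F$, this reduces to the assertion that for every $s\in\bA$ there is $c\in F$ with $n(\alpha^2(s-c))\in\kappa^{-1}U\kappa$; equivalently $\bA=F+\alpha^{-2}V'$, where $V':=\{\beta\in\bA:n(\beta)\in\kappa^{-1}U\kappa\}$ is an open neighborhood of $0$.

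To make the threshold uniform in $\kappa$, I would replace $U$ by its open normal core $U_0:=\bigcap_{\kappa\in K}\kappa^{-1}U\kappa$; this is open of finite index in $K$, because $U$ agrees with $K$ at every archimedean place (a connected compact group has no proper open subgroup) and has finite index at the finitely many remaining places. Then $V'\supseteq V_0:=\{\beta:n(\beta)\in U_0\}$, a fixed neighborhood of $0$, so it suffices to arrange $\bA=F+\alpha^{-2}V_0$. Since $\bA/F$ is compact (equivalently, by vanishing of $H^1$ for line bundles of large degree via Riemann--Roch), there is $\cN_0=\cN_0(U)\ge 0$ such that $\alpha^{-2}V_0$ already surjects onto $\bA/F$ whenever $\deg\alpha>\cN_0$, which proves (i). The main obstacle is precisely this quantitative covering statement together with the uniformity in $\kappa$; the rest of (i) is bookkeeping with the Iwasawa decomposition.

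For (ii), write also $p\colon G(\bA)/B(F)\to G(\bA)/G(F)$ for the other projection, so that $\CT=q_*p^*$ and $\Eis=p_*q^*$, and expand
\[
\Eis\bigl(\CT(h)^{>\cN_0}\bigr)(x)=\sum_{\substack{z\in p^{-1}(x)\\ \deg z>\cN_0}}\CT(h)(q(z)).
\]
For each $z$ with $\deg z>\cN_0$, part (i) shows that the $q$-fiber through $z$ maps to a single point of $U\backslash G(\bA)/B(F)$; since $h$ is left $U$-invariant, the integrand defining $\CT(h)(q(z))$ is constant along that fiber, and as $N(\bA)/N(F)$ has measure $1$ one gets $\CT(h)(q(z))=h(x)$. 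Finally \propref{p:deg+deg} together with $\cN_0\ge 0$ shows that $p^{-1}(x)$ contains at most one element of degree $>\cN_0$ (two such would have degrees summing to a positive number), while the hypothesis $h=h^{>\cN_0}$ guarantees that such an element exists whenever $h(x)\ne 0$. Hence the displayed sum equals $h(x)$ when $h(x)\ne 0$ and is an empty sum, equal to $0=h(x)$, otherwise; this yields $\Eis(\CT(h)^{>\cN_0})=h$.
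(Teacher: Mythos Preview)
Your proof is correct and follows essentially the same route as the paper's: for (ii) both arguments use part (i) to make the map to the $T(F)N(\bA)$-side injective (so that $\CT(h)$ recovers $h$ on the high-degree locus) and \propref{p:deg+deg} to make the map to the $G(F)$-side injective (so that the Eisenstein sum has a single term); you simply unwind the pull-push identity $\pi_*p^*p_*\pi^*(h)=h$ pointwise. For (i) the paper only records the result as well known (with the explicit bound $\cN_0=\max(0,\,g_F-1+\tfrac12\deg D)$ when $U$ contains the level-$D$ congruence subgroup), while you supply the standard Riemann--Roch/strong-approximation argument; one small remark is that your ``equivalently'' overstates the reduction slightly, but you only use the easy direction, so nothing is lost.
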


\begin{proof}
Statement (i) is well known. In fact, if $U$ contains the principal congruence subgroup of $K$ of level $D$ then
one can take $\cN_0(U)=\max(0, g_F-1+\frac{1}{2}\cdot\deg D)$, where $g_F$ is the genus of $F$.

To prove (ii), consider the diagram
\begin{equation}   \label{e:correspondence}
(U\backslash G(\bA)/T(F)N(\bA))^{>\cN_0}\overset{p}\longleftarrow (U\backslash G(\bA)/B(F))^{>\cN_0}\overset{\pi}\longrightarrow U\backslash G(\bA )/G(F),
\end{equation}
where the superscript $>\cN_0$ means that we consider only elements of degree $>\cN_0$. In terms of diagram~\eqref{e:correspondence},
$\Eis (\CT(h)^{>\cN_0})=\pi_*p^*p_*\pi^*(h)$. Since $p$ is injective, $\pi_*p^*p_*\pi^*=\pi_*\pi^*$. Since $\cN_0\ge 0$ the map $\pi$ is injective by \propref{p:deg+deg}. Using this fact and the equality $h=h^{>\cN_0}$ we get $\pi_*\pi^*(h)=h$.
\end{proof}

\subsubsection{A formula for $\B_{ps}\,$}
As before, we assume that $F$ is a function field.

\begin{prop}   \label{p:form on ps-c}
Let $U\subset G(\bA )$ be an open subgroup. Then there exists a number $\cN_0(U)$ such that for any $g_1,g_2\in\cA_{ps-c}^U$ and any 
$\cN\ge\cN_0(U)$ one has
\begin{equation}  \label{e:form on ps-c}
\B_{ps}(g_1,g_2)=\B_{naive}(g_1^{\le\cN},g_2^{\le\cN})-\langle\CT(g_1)^{\le\cN},\CT(g_2)^{\le\cN}\rangle.
\end{equation}
Here $\langle\;,\;\rangle$ denotes the pairing \eqref{e:pairing}.
\end{prop}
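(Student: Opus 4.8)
The plan is to collapse $\B_{ps}$ to a single naive integral and then to match that integral with the right-hand side of \eqref{e:form on ps-c} via the truncation identity behind \lemref{l:truncation}. Since $g_1,g_2$ are $K$-finite and $F$ is a function field (so there are no Archimedean places), they are invariant under some open compact subgroup of $K$; replacing $U$ by such a subgroup I may assume $U\subset K$, which is what allows \lemref{l:truncation} to be invoked, with $\cN_0(U)$ as there. Writing $f_i:=L^{-1}g_i$, which lies in $\cA_c$ by \corref{c:A}, the definition of $\B_{ps}$ together with \eqref{e:B and A} gives
\begin{equation*}
\B_{ps}(g_1,g_2)=\B(L^{-1}g_1,L^{-1}g_2)=\B_{naive}(g_1,L^{-1}g_2),
\end{equation*}
an expression that is well defined because $L^{-1}g_2\in\cA_c$. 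By \eqref{e:Ainverse}, $L^{-1}g_2=g_2-\Eis\circ\CT(g_2)$.

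The core of the argument is the identity
\begin{equation*}
g^{>\cN}=\Eis\bigl(\CT(g)^{>\cN}\bigr),\qquad g\in\cA_{ps-c}^U,\quad \cN\ge\cN_0(U).
\end{equation*}
To establish it I would first observe that $g^{>\cN}=(g^{>\cN})^{>\cN}$, so the argument of \lemref{l:truncation}(ii)---which applies verbatim at any level $\cN\ge\cN_0(U)$---gives $\Eis(\CT(g^{>\cN})^{>\cN})=g^{>\cN}$. It then suffices to check $\CT(g)^{>\cN}=\CT(g^{>\cN})^{>\cN}$, i.e.\ that $\CT(g^{\le\cN})$ vanishes in degrees $>\cN$. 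This is a support computation: if $\deg x>\cN$ then for every $n\in N(\bA)$ the point $xn$ determines a class in $G(\bA)/B(F)$ of degree $\deg x>\cN$ lying over the image of $xn$ in $G(\bA)/G(F)$, so $g^{\le\cN}(xn)=0$ and hence $\CT(g^{\le\cN})(x)=0$. Substituting the identity into $L^{-1}g_2=g_2-\Eis\circ\CT(g_2)$, after splitting $g_2=g_2^{\le\cN}+g_2^{>\cN}$ and $\CT(g_2)=\CT(g_2)^{\le\cN}+\CT(g_2)^{>\cN}$, the $>\cN$ contributions cancel and leave
\begin{equation*}
L^{-1}g_2=g_2^{\le\cN}-\Eis\bigl(\CT(g_2)^{\le\cN}\bigr),
\end{equation*}
with both summands in $\cA_c$.

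Finally I would insert this into $\B_{ps}(g_1,g_2)=\B_{naive}(g_1,L^{-1}g_2)$ and handle the two terms separately. In $\B_{naive}(g_1,g_2^{\le\cN})$ the supports of $g_1^{>\cN}$ and $g_2^{\le\cN}$ are disjoint subsets of $G(\bA)/G(F)$, so this term equals $\B_{naive}(g_1^{\le\cN},g_2^{\le\cN})$. In $\B_{naive}(g_1,\Eis(\CT(g_2)^{\le\cN}))$ one has $\CT(g_2)^{\le\cN}\in\C_c$ and $\Eis(\C_c)\subset\cA_c$, so the duality \eqref{e:duality} applies with $f=g_1\in\cA$ and $\varphi=\CT(g_2)^{\le\cN}\in\C_c$ and turns this term into $\langle\CT(g_1),\CT(g_2)^{\le\cN}\rangle$; splitting $\CT(g_1)=\CT(g_1)^{\le\cN}+\CT(g_1)^{>\cN}$ and using that the degree supports of $\CT(g_1)^{>\cN}$ and $\CT(g_2)^{\le\cN}$ are disjoint reduces it to $\langle\CT(g_1)^{\le\cN},\CT(g_2)^{\le\cN}\rangle$. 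Subtracting the two terms gives exactly \eqref{e:form on ps-c}.

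I expect the one genuinely substantial step to be the identity $g^{>\cN}=\Eis(\CT(g)^{>\cN})$, and within it the interaction between the two \emph{different} truncations---degree truncation on $\C$ versus pre-image truncation on $\cA$---encoded in the vanishing $\CT(g^{\le\cN})^{>\cN}=0$. Once that is secured, everything else is bookkeeping with disjoint supports and the adjunction \eqref{e:duality}. The only other point requiring care is the opening reduction to $U\subset K$, which is needed to legitimize the use of \lemref{l:truncation} and is harmless thanks to $K$-finiteness.
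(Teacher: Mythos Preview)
Your proof is correct and follows essentially the same route as the paper's: reduce to $U\subset K$, unwind $\B_{ps}$ to a $\B_{naive}$ pairing via \eqref{e:B and A} and \eqref{e:Ainverse}, use the truncation identity $g^{>\cN}=\Eis(\CT(g)^{>\cN})$ to rewrite $L^{-1}g$ in truncated form, and then finish with disjoint-support arguments and the duality \eqref{e:duality}. The only cosmetic differences are that the paper applies $L^{-1}$ to $g_1$ rather than $g_2$ (using the other half of \eqref{e:B and A}), and that the paper invokes \lemref{l:truncation}(ii) directly for $g_1^{>\cN}=\Eis(\CT(g_1)^{>\cN})$ whereas you route it through the auxiliary vanishing $\CT(g^{\le\cN})^{>\cN}=0$; your extra step is a perfectly valid way to justify the same identity.
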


\begin{rem}
The assumption $g_i\in\cA_{ps-c}$ means that $\CT (g_i)\in\Cp$. So $$\CT(g_1)^{\le\cN}\in \Cp\cap\Cm=\C_c\, .$$
Therefore the expression $\langle\CT(g_1)^{\le\cN},\CT(g_2)^{\le\cN}\rangle$ makes sense. Note that $g_i^{\le\cN}\in\cA_c$ by \propref{p:Minkowski}, so the expression $\B_{naive}(g_1^{\le\cN},g_2^{\le\cN})$ also makes sense.

\end{rem}

\begin{rem}
$\CT(g_i)^{\le\cN}$ is not the same as $\CT(g_i^{\le\cN})$ because $(\CT(g_i^{>\cN}))^{\le\cN}$ is usually nonzero.
\end{rem}

\begin{proof}
Without loss of generality, we can assume that $U\subset K$. Let $\cN_0(U)$ be as in \lemref{l:truncation}(i).
Let us show that \eqref{e:form on ps-c} holds for $\cN\ge\cN_0(U)$.

One has 
\[
\B_{ps}(g_1,g_2):=\B (L^{-1}g_1,L^{-1}g_2)=\B_{naive}(L^{-1}g_1,g_2). 
\]
By \eqref{e:Ainverse}, $L^{-1}g_1=g_1-\Eis\circ\CT (g_1)$. 
If $\cN\ge\cN_0(U)$ then $g_1^{>\cN}=\Eis (\CT (g_1)^{>\cN})$ by  \lemref{l:truncation}(ii). So
\[
g_1-\Eis\circ\CT (g_1)=g_1^{\le\cN}-\Eis (\CT (g_1)^{\le\cN}).
\]
Now using \eqref{e:duality}, we get
\begin{multline*}
\B_{ps}(g_1,g_2)=\B_{naive}(g_1^{\le\cN},g_2)-\langle\CT (g_1)^{\le\cN},\CT (g_2)\rangle\\
=\B_{naive}(g_1^{\le\cN},g_2^{\le\cN})-\langle\CT(g_1)^{\le\cN},\CT(g_2)^{\le\cN}\rangle,
\end{multline*}
and we are done. 
\end{proof}

\section{The action of $M$ and $M^{-1}$ on $K$-invariants}     \label{s:K-invariants}
Recall that $K$ denotes the standard maximal compact subgroup of $G(\bA )$. Let 
$$M^K:\Cp^K\to\Cm^K$$
denote the operator induced by $M:\Cp\to\Cm\,$. In this section we write explicit formulas for $M^K$ and $(M^K)^{-1}$ in a format which is convenient for the proofs of Theorems~\ref{t:our main} and \ref{t: Eis_! and Eis'}. 

First, we recall a well known description of $M^K$, see \lemref{l:M^K as conv}, formula~\eqref{e:alpha explicit}, and \lemref{l:alpha_v}. Then we deduce from it the description of $(M^K)^{-1}$ given by \corref{c:alpha invertible}, formula~\eqref{e:beta explicit}, and \propref{p:beta_v}; the key formulas are \eqref{e:nonArch beta}-\eqref{e:C beta}. In the case of function fields we slightly modify the description of $(M^K)^{-1}$ in Subsect.~\ref{ss: funct fields}.

\subsection{Some notation}   \label{ss:divisor notation}
Define a subset
$O_v\subset F_v$ and a subgroup 
$O_v^{\times}\subset F_v^{\times}$ by
\[
O_v:=\{x\in F_v: |x|\le 1\}, \quad O_v^\times :=\{x\in F_v^\times : |x|= 1\}.
\]
(Note that if $F_v$ is Archimedean then the subset $O_v\subset F_v$ is not a subring.)
Define a subset $O_\bA\subset\bA$ and a subgroup 
$O_\bA^{\times}\subset \bA^{\times}$ by
\[
O_\bA:=\prod_v O_v\, , \quad O_\bA^\times:=\prod_v O_v^\times\,.
\]

Sometimes we will use the notation $\Div (F):=\bA^{\times}/O_\bA^{\times}\,$. Elements of $\Div (F)$ will be called divisors (although in the number field case the precise name is \emph{Arakelov} divisor or \emph{replete} divisor). We have the closed submonoid $\Div_+ (F)\subset\Div (F)$ defined by
$$\Div_+ (F):=(\bA^\times\cap O_\bA ) /O_\bA^\times\, .$$
This is the submonoid of \emph{effective} divisors.

Similarly, for any place $v$ of $F$ we set 
$$\Div (F_v):=F_v^{\times}/O_v^{\times}, \quad \Div_+ (F_v):=(F_v^{\times}\cap O_v)/O_v^{\times}.$$

We will often use additive notation for divisors.

\subsection{A formula for $M^K$ in terms of convolution}
We identify $\C^K$ with the space 
$$C^{\infty}((T(\bA )\cap K)\backslash T(\bA )/T(F))=C^{\infty}(\bA^{\times}/(F^{\times}\cdot O_{\bA}^{\times}))=
C^{\infty}(\Div (F)/F^{\times}).$$
By definition, a function $\varphi\in C^{\infty}(\bA^{\times}/(F^{\times}\cdot O_{\bA}^{\times}))$ is in $\Cp^K$ 
if and only if 
$\varphi (x)=0$ for $\deg x\ll 0$ (i.e., for $||x||\gg 1$); similarly, $\varphi$ is  in $\Cm^K$ if and only if 
$\varphi (x)=0$ for $\deg x\gg 0$ (i.e., for $||x||\ll  1$).

In \lemref{l:M^K as conv} below we will write a formula for $M^K:\Cp^K\to\Cm^K$ in terms of convolution on the group 
$\bA^{\times}/O_{\bA}^{\times}\,$.

\subsubsection{A map $F_v\to F_v^\times/O_v^\times\,$}  \label{sss:kind-of-norm}
Let $v$ be a place of $F$. We have the Iwasawa decomposition $G(F_v)=K_v\cdot T(F_v)\cdot N(F_v)$.

Define a map $f_v:F_v\to F_v^\times/O_v^\times$ as follows: $f_v(x)$ is the class of any $a\in F_v^{\times}$ such that
\begin{equation}   \label{e: norm1}
\left(\begin{matrix} 1&x\\0&1\end{matrix}\right)\cdot
\left(\begin{matrix} 0&1\\-1&0\end{matrix}\right)
\in K_v\cdot\left(\begin{matrix} a^{-1}&0\\0&a\end{matrix}\right)
\cdot N(F_v).
\end{equation}
In other words, $f_v(x)$ describes the image in $K_v\backslash G(F_v)/N(F_v)$ of the matrix
\[
\left(\begin{matrix} 1&0\\-x&1\end{matrix}\right)=
\left(\begin{matrix} 0&-1\\1&0\end{matrix}\right)\cdot
\left(\begin{matrix} 1&x\\0&1\end{matrix}\right)\cdot
\left(\begin{matrix} 0&1\\-1&0\end{matrix}\right)
\]

\begin{lem}    \label{l:properties of f_v}
(i) $|f_v(x)|\le 1$ for all $x\in F_v\,$. 

(ii) The map $f_v:F_v\to F_v^\times/O_v^\times$  is proper.

(iii) Suppose that $F_v$ is non-Archimedean. Then $f_v (x)=1\Leftrightarrow x\in O_v\,$.
\end{lem}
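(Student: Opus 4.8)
The plan is to reduce all three statements to a single explicit computation of the absolute value $|f_v(x)|$. Indeed, the absolute value gives an isomorphism $F_v^\times/O_v^\times\iso |F_v^\times|$ onto the value group (since $O_v^\times$ is by definition the kernel of $|\cdot|$), so the class $f_v(x)\in F_v^\times/O_v^\times$ is completely determined by the number $|f_v(x)|=|a|$, where $a$ is as in \eqref{e: norm1}.

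First I would record an elementary fact about the Iwasawa decomposition. Equip $F_v^2$ with the $K_v$-invariant norm $\|\cdot\|$: the max-norm $\|(u_1,u_2)\|=\max(|u_1|,|u_2|)$ when $F_v$ is non-Archimedean (here $K_v=G(O_v)=SL(2,O_v)$ preserves the lattice $O_v^2$, which is the unit ball of this norm), and the standard Euclidean (resp. Hermitian) norm when $F_v=\bR$ (resp. $F_v=\bC$). If $g\in G(F_v)$ is written as $g=k\cdot\diag(a^{-1},a)\cdot n$ with $k\in K_v$ and $n\in N(F_v)$, then for the first standard basis column vector $e_1=(1,0)$ we have $n e_1=e_1$ and $\diag(a^{-1},a)e_1=a^{-1}e_1$, hence $g e_1=a^{-1}\,k e_1$. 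Since $k$ preserves the norm and $\|e_1\|=1$, this gives $\|g e_1\|=|a|^{-1}$; in other words $|a|$ is the reciprocal of the norm of the first column of $g$.

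Next I would apply this to the matrix appearing in \eqref{e: norm1}. The product $\bigl(\begin{smallmatrix}1&x\\0&1\end{smallmatrix}\bigr)\bigl(\begin{smallmatrix}0&1\\-1&0\end{smallmatrix}\bigr)$ equals $\bigl(\begin{smallmatrix}-x&1\\-1&0\end{smallmatrix}\bigr)$, whose first column is $(-x,-1)$. Therefore $|f_v(x)|=\|(-x,-1)\|^{-1}$, which is $\max(|x|,1)^{-1}$ in the non-Archimedean case and $(|x|^2+1)^{-1/2}$ (up to the normalization of $|\cdot|$) in the Archimedean case. From these formulas the three claims follow immediately. Statement (i) holds because $\max(|x|,1)\ge 1$ and $|x|^2+1\ge 1$, so $|f_v(x)|\le 1$. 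For (iii), in the non-Archimedean case $f_v(x)$ is trivial in $F_v^\times/O_v^\times$ exactly when $|a|=1$, i.e. when $\max(|x|,1)=1$, i.e. when $x\in O_v$. For the properness statement (ii): in the non-Archimedean case $F_v^\times/O_v^\times\cong\bZ$ is discrete, a compact set is finite, and the preimage of a single class is either $O_v$ (for the trivial class) or a sphere $\{x:|x|=c\}$, both compact; in the Archimedean case $|f_v(x)|\to 0$ as $|x|\to\infty$ while staying $\le 1$, so the preimage of any compact subset of $F_v^\times/O_v^\times\cong\bR_{>0}$ is bounded and closed, hence compact.

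I do not anticipate a genuine obstacle. The only point requiring care is the identification of the correct $K_v$-invariant norm in each of the three cases and the bookkeeping of the normalized absolute value on $\bC$ (where $|z|$ is the square of the usual modulus); this affects the explicit constant in the Archimedean formula but none of the qualitative conclusions.
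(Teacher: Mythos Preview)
Your proposal is correct and follows essentially the same approach as the paper. The paper's proof is a single sentence: condition~\eqref{e: norm1} says precisely that the norm of the vector $a\cdot(1,x)\in F_v^2$ equals $1$, and ``the lemma follows.'' You arrive at the equivalent identity $|a|=\|(-x,-1)\|^{-1}$ by reading off the first column, and then you spell out the three consequences (including the properness argument) in more detail than the paper bothers to. The only difference is presentational: the paper looks at the row vector $(1,x)$ while you look at the column $(-x,-1)$, and the paper leaves the case analysis for (i)--(iii) to the reader.
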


\begin{proof}
Condition~\eqref{e: norm1} means that the norm\footnote{Here the meaning of the word `norm' depends on the type of the local field $F_v$ (e.g., if $F_v=\bR$ it means the Euclidean norm). On the other hand, one has the following uniform definition: the norm of a vector $(x_1,\ldots,x_n)\in F_v^n\setminus\{0\}$ is the $l^p$-norm of $(|x_1|,\ldots,|x_n|)\in\bR^n$, where $p=p(F_v):=[\bar F_v:F_v]$  and $|x_i|$ is the normalized absolute value.} of the vector $a\cdot (1,x)\in F_v^2$ equals 1. The lemma follows.
\end{proof}

\subsubsection{A map $\bA\to \bA^\times/O_\bA^\times\,$}   \label{sss:2kind-of-norm}
Let $f:\bA\to \bA^\times/O_\bA^\times=\Div (F)$ be the map induced by the maps $f_v:F_v\to F_v^\times/O_v^\times$ from Subsect.~\ref{sss:kind-of-norm}. In other words, for $x\in\bA$ one defines $f(x)$ to be the class of any $a\in\bA^{\times}$ such that
\[
\left(\begin{matrix} 1&x\\0&1\end{matrix}\right)\cdot
\left(\begin{matrix} 0&1\\-1&0\end{matrix}\right)
\in K\cdot\left(\begin{matrix} a^{-1}&0\\0&a\end{matrix}\right)
\cdot N(\bA ).
\]
The map $f:\bA\to \bA^\times/O_\bA^\times=\Div (F)$ is proper by \lemref{l:properties of f_v}(ii-iii).

\subsubsection{The measure $\alpha$}  \label{sss: measure on Div}
Let $f:\bA\to \Div (F)$ be as in Subsect.~\ref{sss:2kind-of-norm}. Define $\alpha$ to be the $f$-pushforward of the Haar measure on $\bA$ such that $\mes (\bA/F)=1$.
By \lemref{l:properties of f_v}(i), the measure $\alpha$ is supported on the submonoid 
$$\Div_+(F):=(\bA^\times\cap O_\bA  )/O_\bA^\times \, .$$
So we have an operator $\Cp^K\to\Cp^K$ defined by $\varphi\mapsto\alpha *\varphi$.

\begin{lem}   \label{l:M^K as conv}
For any $\varphi\in\Cp^K$ one has
\[
||a||^{-2}\cdot (M\varphi )(a^{-1})=(\alpha *\varphi )(a), \quad a\in\bA^{\times}/(F^{\times}\cdot O_{\bA}^{\times}).
\]
\end{lem}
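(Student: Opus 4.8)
The statement relates the intertwiner $M^K$ to convolution by the measure $\alpha$, which is the pushforward under $f\colon \bA\to\Div(F)$ of Haar measure on $\bA$. The plan is to unwind the explicit integral formula \eqref{e:M} for $M$ restricted to $K$-invariants and recognize it as a convolution integral. The key observation is that the map $f_v$ from Subsection~\ref{sss:kind-of-norm} was precisely designed to record the Iwasawa $T$-component of $\left(\begin{smallmatrix} 1&x\\0&1\end{smallmatrix}\right)w$, so that the integrand $\varphi(xnw)$ in \eqref{e:M} depends on $n$ only through $f(x_n)$, where $x_n\in\bA$ is the upper-right entry of $n$.

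First I would set $x=\diag(a^{-1},a)$ (or rather work on the base point and translate) and write out $(M\varphi)(a^{-1})$ using \eqref{e:M} as $\int_{N(\bA)}\varphi(\diag(a^{-1},a)\cdot n\cdot w)\,dn$. Identifying $N(\bA)\cong\bA$ via $u\mapsto \left(\begin{smallmatrix}1&u\\0&1\end{smallmatrix}\right)$, and using $K$-invariance of $\varphi$ together with the Iwasawa decomposition encoded in Subsection~\ref{sss:2kind-of-norm}, I would show that $\left(\begin{smallmatrix}1&u\\0&1\end{smallmatrix}\right)w$ lies in $K\cdot\diag(f(u)^{-1},f(u))\cdot N(\bA)$, so that $\varphi$ evaluated there equals $\varphi$ evaluated at the class $f(u)^{-1}$ in $\Div(F)/F^\times$ (thinking of $\varphi$ as a function on $\Div(F)/F^\times$ as in Subsection~\ref{ss:divisor notation}). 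After incorporating the diagonal translation by $a$ and tracking the modulus factor from the action \eqref{e:t star f}, the integral becomes $\int_{\bA}\varphi(a\cdot f(u)^{-1})\,du$ up to the normalization $||a||^{-2}$.

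The final step is to push the measure forward along $f$: since $\alpha$ is defined as $f_*(\text{Haar})$ with $\mes(\bA/F)=1$, the integral $\int_{\bA}\varphi(a\cdot f(u)^{-1})\,du$ equals $\int_{\Div(F)}\varphi(a\cdot d^{-1})\,d\alpha(d)=(\alpha*\varphi)(a)$, which is exactly the convolution on $\bA^\times/O_\bA^\times$ appearing in the statement. I would need to check that the convolution is well-defined as an operator $\Cp^K\to\Cp^K$, but this is guaranteed by properness of $f$ (Subsection~\ref{sss:2kind-of-norm}) and the support of $\alpha$ in $\Div_+(F)$ (Subsection~\ref{sss: measure on Div}), which ensures the sum/integral is locally finite and preserves the support condition defining $\Cp^K$.

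The main obstacle I anticipate is bookkeeping of normalizations: correctly matching the modulus factor $||t||^{-1}$ in the action \eqref{e:t star f}, the $dn$ normalization, and the appearance of $a^{-1}$ versus $a$ and $f(u)^{-1}$ versus $f(u)$, so that the factor comes out exactly as $||a||^{-2}$ and the arguments align. The conceptual content—that $M^K$ is convolution by $\alpha$—is immediate once $f_v$ is recognized as computing the Iwasawa component, so the entire difficulty is in verifying that the local normalizations assemble into the stated global identity.
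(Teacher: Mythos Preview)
Your proposal is correct and is exactly the approach the paper takes: the paper's proof consists of a single sentence stating that the lemma follows straightforwardly from formula~\eqref{e:M}, and you have correctly outlined how that unwinding goes. One small clarification on the bookkeeping you flagged: the factor $||a||^{-2}$ does not come from the action~\eqref{e:t star f} but from the Jacobian of the change of variable $u\mapsto a^{-2}u$ after conjugating $n_u$ past $\diag(a^{-1},a)$ (equivalently, from the modulus character of $B$); once you write $\diag(a^{-1},a)\,n_u\,w = n_{a^{-2}u}\,w\,\diag(a,a^{-1})$ and substitute, everything lines up as you describe.
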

The lemma follows straightforwardly from formula~\eqref{e:M}, which says that

\[
(M\varphi )(x)=\int\limits_{N(\bA)}\varphi (xnw)dn, \quad\quad  \varphi\in\Cp\, ,\,  x\in G(\bA)/T(F)N(\bA ),
\]
where $w:=\left(\begin{matrix} 0&1\\-1&0\end{matrix}\right)\in SL(2)$.

\subsection{The distribution $\alpha$ and its convolution inverse}   \label{ss:beta}
\subsubsection{An algebra of distributions}  \label{sss:A}
Let $A$ denote the space of distributions on $\Div (F)$ supported on $\Div_+ (F)$.
The map 
$$\Div_+(F)\times\Div_+(F)\to\Div_+(F)$$ 
induced by the group operation in $\Div (F)$ is proper, so $A$ is an algebra with respect to convolution. This algebra acts (by convolution) on $\Cp^K$.

\begin{rem}   \label{r:A for funct fields}
If $F$ is a function field then $A$ is just the completed semigroup algebra of the monoid $\Div_+(F)$. So in this case $A$ is a local ring; its maximal ideal consists of those distributions which are supported on $\Div_+(F)\setminus\{ 0\}$.
\end{rem}

\subsubsection{Invertibility statements}
The measure $\alpha$ from Subsect.~\ref{sss: measure on Div} is an element of the algebra $A$ from Subsect.~\ref{sss:A}.

\begin{prop}   \label{p:alpha invertible}
$\alpha$ is invertible in $A$.
\end{prop}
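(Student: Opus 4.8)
The plan is to prove invertibility of $\alpha$ by exploiting the local structure of the algebra $A$, working place by place and then assembling the pieces. First I would recall from \remref{r:A for funct fields} that in the function field case $A$ is the completed semigroup algebra of $\Div_+(F)$, which is a local ring. The crucial observation is that $\alpha$, being the pushforward of a probability measure (since $\mes(\bA/F)=1$), has total mass $1$, so its ``constant term'' (the coefficient at the zero divisor $0\in\Div_+(F)$) is nonzero. More precisely, $\alpha = c_0\cdot\delta_0 + \alpha'$, where $\alpha'$ is supported on $\Div_+(F)\setminus\{0\}$ and $c_0\ne 0$; then $\alpha = c_0(\delta_0 + c_0^{-1}\alpha')$ is a nonzero scalar times a unit, because $c_0^{-1}\alpha'$ lies in the maximal ideal and $\delta_0+(\text{maximal ideal element})$ is invertible by the geometric series $\sum_{n\ge 0}(-c_0^{-1}\alpha')^{*n}$, which converges in the completed semigroup algebra.

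The main conceptual step is establishing convergence of this Neumann-type series, and here is where the structure of $A$ as a \emph{completed} semigroup algebra does the work: convolution powers $(\alpha')^{*n}$ are supported on divisors of degree growing with $n$ (each factor contributes a strictly positive-degree effective divisor since $\alpha'$ is supported away from $0$), so for any fixed effective divisor $D$ only finitely many terms contribute to the coefficient at $D$, and the series is well-defined coefficientwise. I would verify that $c_0\ne 0$ by computing $\alpha(\{0\})$ directly: by \lemref{l:properties of f_v}(iii), $f_v(x)=1$ (i.e., $f_v$ lands in the zero divisor locally) exactly when $x\in O_v$ at non-Archimedean places, so the fiber $f^{-1}(0)$ has positive measure, giving $c_0>0$.

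For the number field case the argument needs adjustment because $A$ is no longer obviously local in the naive sense — the Archimedean components make $\Div_+(F_v)$ a continuum rather than a discrete monoid, so the notion of ``completed semigroup algebra'' and the degree-filtration convergence argument must be handled analytically rather than formally. The hard part will be making precise the topology on $A$ in which the geometric series converges: one wants a filtration by the total degree $\deg:\Div_+(F)\to\bR_{\ge 0}$ and a statement that convolution strictly raises the minimal degree of support of $\alpha'$, together with local compactness ensuring the partial sums converge as distributions. I expect the cleanest route is to reduce to the local factorization $\alpha=\bigotimes_v\alpha_v$ (restricted tensor product over places), invert each local factor $\alpha_v$ separately in the local algebra $A_v$ of distributions on $\Div_+(F_v)$ — where at non-Archimedean places the inversion is the formal computation above, and at Archimedean places it follows from the explicit Mellin-transform computation foreshadowed in \remref{r:hope} via the factor $\zeta_{F_x}(s)/\zeta_{F_x}(s-1)$ — and then recombine. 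The delicate point is controlling the infinite product over places so that $\alpha^{-1}=\bigotimes_v\alpha_v^{-1}$ genuinely defines an element of $A$ and not merely a formal product; this is where properness of $f$ (Subsect.~\ref{sss:2kind-of-norm}) and the compatibility of the local inverses at almost all places (where $\alpha_v^{-1}$ should reduce to $\delta_0$ plus a controlled tail) must be invoked.
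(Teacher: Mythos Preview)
Your approach is essentially the same as the paper's. For function fields you both invoke \remref{r:A for funct fields} and the nonvanishing of the coefficient at $0$; for the general case you both factor $\alpha$ as a product of local measures $\alpha_v$, invert each $\alpha_v$ individually (the paper does this by writing down explicit formulas \eqref{e:nonArch beta}--\eqref{e:C beta} for $\beta_v$ and checking via the Mellin transform that $\alpha_v * \beta_v = \delta_0$), and tensor back up via \eqref{e:beta explicit}.

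One remark: your worry that the infinite tensor product $\bigotimes_v \beta_v$ might fail to define an element of $A$ is unfounded, and the paper spends no time on it. The point is that $\Div(F) = \bA^\times/O_\bA^\times$ is a \emph{restricted} product: any element has only finitely many nontrivial non-Archimedean components, and there are only finitely many Archimedean places. So evaluating $\bigotimes_v \beta_v$ against any test function on $\Div_+(F)$ involves only a finite product of the non-Archimedean $\beta_v$'s together with the finite Archimedean tensor factor. (In particular, your expectation that $\beta_v$ ``reduces to $\delta_0$ plus a controlled tail'' at almost all $v$ is not what happens --- formula \eqref{e:nonArch beta} shows $\beta_v$ is never close to $\delta_0$ --- but this is irrelevant for the reason just stated.)
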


If $F$ is a function field the proposition immediately follows from \remref{r:A for funct fields} and the (obvious) fact that the support of $\alpha$ contains $0$.
In Subsections~\ref{sss:explicit alpha_v}-\ref{sss:beta} below we prove the proposition for any global field and give an explicit description of both 
$\alpha$ and its convolution inverse.

\begin{cor}   \label{c:alpha invertible}
The operator $M^K :\Cp^K\to\Cm^K$ is invertible. For any $u\in\Cm^K$ one has
\[
(M^K)^{-1} (u )=\beta *w\, ,
\]
where $\beta$ is the inverse of $\alpha$ in $A$ and $w\in\Cp^K$ is defined by
\[
w (x):=||x||^{-2}\cdot u (x^{-1}).
\]
\end{cor}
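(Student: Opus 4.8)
The plan is to deduce Corollary~\ref{c:alpha invertible} directly from \propref{p:alpha invertible} together with the convolution description of $M^K$ provided by \lemref{l:M^K as conv}. The strategy is to translate everything into the ``flip-and-twist'' coordinate change $x\mapsto x^{-1}$ (with the $\|x\|^{-2}$ factor) under which $M^K$ becomes plain convolution by $\alpha$, invert $\alpha$ in the algebra $A$ using \propref{p:alpha invertible}, and transport the result back.

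\textbf{Setup of the involution.} First I would introduce the linear map $\iota\colon\C^K\to\C^K$ defined by $(\iota\psi)(x):=\|x\|^{-2}\cdot\psi(x^{-1})$. Since $\deg x^{-1}=-\deg x$, the map $\iota$ interchanges the conditions ``$\psi(x)=0$ for $\deg x\gg0$'' and ``$\psi(x)=0$ for $\deg x\ll 0$'', so $\iota$ restricts to bijections $\iota\colon\Cm^K\iso\Cp^K$ and $\iota\colon\Cp^K\iso\Cm^K$; moreover $\iota$ is an involution ($\iota^2=\id$), because the two twisting factors $\|x\|^{-2}$ and $\|x^{-1}\|^{-2}$ multiply to $\|x\|^{-2}\|x\|^{2}=1$. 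In this notation \lemref{l:M^K as conv} reads exactly $\iota(M^K\varphi)=\alpha*\varphi$ for all $\varphi\in\Cp^K$; that is, $M^K=\iota\circ(\alpha*-)$ as an operator $\Cp^K\to\Cm^K$, where $\alpha*-$ denotes the convolution operator $\Cp^K\to\Cp^K$ coming from the $A$-action of \S\ref{sss:A}.

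\textbf{Inverting and reading off the formula.} By \propref{p:alpha invertible} the element $\alpha$ has a convolution inverse $\beta\in A$, so $\alpha*-$ is an invertible operator on $\Cp^K$ with inverse $\beta*-$; since $\iota$ is an invertible involution, the composite $M^K=\iota\circ(\alpha*-)$ is invertible with
\[
(M^K)^{-1}=(\alpha*-)^{-1}\circ\iota^{-1}=(\beta*-)\circ\iota.
\]
Unwinding this for $u\in\Cm^K$: setting $w:=\iota(u)\in\Cp^K$, i.e. $w(x)=\|x\|^{-2}\,u(x^{-1})$, we obtain $(M^K)^{-1}(u)=\beta*w$, which is precisely the claimed formula. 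I would also remark that $\beta*w\in\Cp^K$ as required, since $A$ preserves $\Cp^K$.

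\textbf{Anticipated obstacles.} The mathematical content is entirely carried by \propref{p:alpha invertible} (whose proof over an arbitrary global field is deferred), so at this stage the only care needed is bookkeeping: verifying that $\iota$ genuinely swaps $\Cp^K$ and $\Cm^K$, that the $\|x\|^{-2}$ twist makes $\iota$ an involution rather than merely a bijection, and that \lemref{l:M^K as conv} is being read with the correct placement of $a$ versus $a^{-1}$. The most error-prone point is matching conventions in \lemref{l:M^K as conv}, which is stated as $\|a\|^{-2}(M\varphi)(a^{-1})=(\alpha*\varphi)(a)$; substituting $a=x^{-1}$ and using $\|a\|^{-2}=\|x\|^{2}$ shows this is literally $(\iota M^K\varphi)(x)=(\alpha*\varphi)(x)$, so one must track the inversion on the torus carefully to avoid an inadvertent sign of the degree. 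Once these identifications are pinned down, the corollary is a formal consequence.
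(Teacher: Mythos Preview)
Your proof is correct and is exactly the deduction the paper intends: the corollary is stated without proof, as an immediate consequence of \lemref{l:M^K as conv} and \propref{p:alpha invertible}, and your introduction of the involution $\iota$ simply packages that deduction cleanly. One harmless slip in your ``Anticipated obstacles'' paragraph: no substitution is needed, since the lemma already reads $(\iota M^K\varphi)(a)=\|a\|^{-2}(M^K\varphi)(a^{-1})=(\alpha*\varphi)(a)$; your proposed substitution $a=x^{-1}$ would instead yield $\|x\|^{2}(M\varphi)(x)=(\alpha*\varphi)(x^{-1})$, but this does not affect the main argument.
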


\subsubsection{The local measures $\alpha_v\,$}   \label{sss:local measures}
Let $v$ be a place of $F$. Equip  $F_v$ with the following Haar measure\footnote{This choice is dictated by the desire to have the simple formula~\eqref{e:zeta function}. Note that the same choice of the Haar measure on $\bC$ is made in \cite[2.5]{T1}, \cite[3.4.2]{De} and \cite[3.2.5]{T2}.}: if $F_v$ is non-Archimedean we require that 
$\mes (O_v)=1$; if $F_v=\bR$ we require that $\mes (\bR/\bZ)=1$; if $F_v\simeq\bC$ we require that $\mes (F_v/(\bZ+\bZ\cdot \sqrt{-1}))=2$.

Let $\alpha_v$ denote the pushforward of the above measure under the proper map 
$f_v:F_v\to F_v^\times/O_v^\times =\Div (F_v)$ from Subsect.~\ref{sss:kind-of-norm}. By \lemref{l:properties of f_v}(i), the measure $\alpha_v$ is supported on $\Div_+ (F_v)$. Then
\begin{equation}    \label{e:alpha explicit}
\mes (\bA/F)\cdot\alpha =\underset{v}\otimes\,\alpha_v\, ,
\end{equation}
where $\bA$ is equipped with the product of the Haar measures on the fields $F_v\,$.

\subsubsection{Explicit description of $\alpha_v\,$}    \label{sss:explicit alpha_v}
If $F_v$ is non-Archimedean we identify $\Div (F_v)$ with $\bZ$ using the standard valuation of $F_v^\times$. Then $\Div_+ (F_v)$ identifies with
$\bZ_{\ge 0}\,$.

If $F_v$ is Archimedean we identify $\Div (F_v)$ with $\bR^\times_{>0}$ using the normalized absolute value on $F_v$. Then $\Div_+ (F_v)$ identifies with the semi-open interval $(0,1]\subset\bR^\times_{>0}\,$. 

For any $t\in\bR$ and any $s$, let $(1-t^2)_+^s$ denote $(1-t^2)^s$ if $1-t^2>0$ and $0$ if $1-t^2\le 0$. The following lemma is straightforward.

\begin{lem}    \label{l:alpha_v}
(i) If $F_v$ is non-Archimedean then 
\[
\alpha_v=\delta_0+\sum_{n=1}^\infty  (q_v^n-q_v^{n-1})\cdot\delta_n\,
\]
where $\delta_n$ is the delta-measure at $n$ and $q_v$ is the order of the residue field.

(ii) If $F_v=\bR$ then $\alpha_v=2t^{-2}\cdot (1-t^2)_+^{-1/2}\cdot dt$, where $t$ is the coordinate on $\bR_{>0}\,$.

(iii) If $F_v\simeq\bC$ then $\alpha_v=2\pi t^{-2} \cdot H(1-t)dt$, where $H$ is the Heaviside step function, i.e., $H(x)=1$ if $x\ge 0$ and 
$H(x)=0$ if $x< 0$.
\end{lem}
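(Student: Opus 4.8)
The plan is to combine the explicit description of the map $f_v$ extracted from the proof of \lemref{l:properties of f_v} with a direct change-of-variables computation in each of the three cases. Recall from that proof that condition~\eqref{e: norm1} amounts to requiring that the vector $a\cdot(1,x)\in F_v^2$ have norm $1$; equivalently, the class $f_v(x)\in\Div(F_v)$ is represented by any $a$ with $|a|_{\mathrm{us}}=\|(1,x)\|^{-1}$, where $\|\cdot\|$ is the norm preserved by $K_v$ and $|\cdot|_{\mathrm{us}}$ is the scaling factor of that norm (the usual modulus). So in every case the only inputs I need are the shape of $\|(1,x)\|$, the identification of the coordinate on $\Div(F_v)$ via the \emph{normalized} absolute value, and the chosen Haar measure; then $\alpha_v=(f_v)_*(\text{measure})$ is obtained by pushing forward.

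First I treat the non-Archimedean case. Here $K_v=G(O_v)$ preserves $\|(1,x)\|=\max(1,|x|)$, so $|a|=\max(1,|x|)^{-1}$, whence $f_v(x)=0$ for $x\in O_v$ and $f_v(x)=n$ (under $\Div_+(F_v)\cong\bZ_{\ge 0}$) when $v(x)=-n<0$. The coefficient of $\delta_0$ is then $\mes(O_v)=1$, while for $n\ge 1$ the coefficient of $\delta_n$ is $\mes\{x:v(x)=-n\}=q_v^{n}-q_v^{n-1}$, since $\{v(x)\ge -n\}=\pi^{-n}O_v$ has measure $q_v^n$. This gives (i).

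Next the Archimedean cases. With $K_v=SO(2)$ and the Euclidean norm, $\|(1,x)\|=(1+x^2)^{1/2}$, so the coordinate $t=|a|\in(0,1]$ on $\Div_+(\bR)$ satisfies $t=(1+x^2)^{-1/2}$. Pushing forward $dx$ (normalized by $\mes(\bR/\bZ)=1$) along this two-to-one map, with $x=\pm(t^{-2}-1)^{1/2}$ and the identity $t^{-2}-1=(1-t^2)/t^2$, produces the density $2t^{-2}(1-t^2)^{-1/2}$ on $(0,1)$, which is (ii). The complex case runs in parallel: with $K_v=SU(2)$ and the Hermitian norm one has $|a|_{\mathrm{us}}=(1+|x|^2)^{-1/2}$, but since the coordinate on $\Div(\bC)$ is the normalized absolute value $|a|=|a|_{\mathrm{us}}^2$, the map becomes $t=(1+|x|^2)^{-1}$. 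Writing $x$ in polar coordinates $(r,\theta)$ so that the Haar measure $2\,dx\,dy$ becomes $2r\,dr\,d\theta$, integrating out $\theta$ (factor $2\pi$) and substituting $r\,dr=-\tfrac12 t^{-2}\,dt$ yields $\alpha_v=2\pi t^{-2}\,dt$ on $(0,1]$, which is (iii).

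The computations are routine; the only places demanding care are the bookkeeping of normalizations and the Jacobians. In particular one must keep straight that the coordinate on $\Div(\bC)$ is the \emph{square} of the usual modulus and that the complex Haar measure carries the factor $2$ coming from $\mes(\bC/(\bZ+\bZ\sqrt{-1}))=2$, and one must get the orientation signs right in the one-variable substitutions (so that each $\int_{r,x>0}$ turns into $\int_0^1$). Handling these consistently is the main—and essentially the only—obstacle, which is why the lemma is stated as straightforward.
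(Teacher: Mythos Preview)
Your proof is correct and proceeds exactly along the lines the paper has in mind: the paper does not spell out any argument, merely stating that the lemma is ``straightforward,'' and your computation is precisely the straightforward one---identifying $f_v(x)$ via the norm condition on $a\cdot(1,x)$ and pushing forward the prescribed Haar measure case by case. Your handling of the normalizations (in particular the square in the complex coordinate and the factor $2$ in the complex Haar measure) is correct.
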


\subsubsection{The Mellin transform of $\alpha_v\,$}
On $F_v^\times/O_v^\times$ one has the measure $\alpha_v$ and, for each $s\in\bC$, the function $x\mapsto |x|^s$, where $|x|$ denotes the normalized absolute value. Integrating this function against $\alpha_v$ one gets the \emph{Mellin transform} of $\alpha_v\,$.The following well known proposition immediately follows from \lemref{l:alpha_v} (in the case 
$F_v=\bR$ one uses the relation between the  $\rm B$-function and the $\Gamma$-function).

\begin{prop}   \label{p:Mellin}
Let $s\in\bC$, $\re s>1$. Then
\begin{equation}    \label{e:zeta function}
\int\limits_{x\in F_v^\times/O_v^\times} |x|^s\cdot\alpha_v= \zeta_{F_v}(s-1)/\zeta_{F_v}(s).
\end{equation}
Here $\zeta_{F_v}(s)$ is the local $\zeta$-function of $F_v\,$; in other words, 
$$\zeta_\bR (s):=\pi^{-s/2}\Gamma (s/2), \quad \zeta_\bC (s):=2(2\pi )^{-s}\Gamma (s),$$
and if $F_v$ is non-Archimedean then
$$\zeta_{F_v}(s):=(1-q_v^{-s})^{-1} .$$
\end{prop}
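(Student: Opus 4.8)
The plan is to reduce the integral to a completely explicit computation by feeding in the three descriptions of $\alpha_v$ supplied by \lemref{l:alpha_v}, and then to check in each case that the resulting Mellin integral coincides with the ratio $\zeta_{F_v}(s-1)/\zeta_{F_v}(s)$. Since the statement is purely local there is no need to invoke \eqref{e:alpha explicit}; I work one place at a time. Throughout I use the identification of $\Div (F_v)$ from Subsect.~\ref{sss:explicit alpha_v}: in the non-Archimedean case the class of valuation $n$ has normalized absolute value $q_v^{-n}$, so $|x|^s$ becomes $q_v^{-ns}$; in the Archimedean case the absolute value is the coordinate $t$, so $|x|^s$ becomes $t^s$.

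First I would treat the non-Archimedean case. Substituting \lemref{l:alpha_v}(i), the integral becomes the series $1+\sum_{n\ge 1}(q_v^n-q_v^{n-1})q_v^{-ns}$. Factoring, this is $1+(1-q_v^{-1})\sum_{n\ge 1}q_v^{n(1-s)}$, a geometric series convergent precisely for $\re s>1$, which sums to $(1-q_v^{-s})/(1-q_v^{1-s})$. Since $\zeta_{F_v}(s)=(1-q_v^{-s})^{-1}$ and hence $\zeta_{F_v}(s-1)=(1-q_v^{1-s})^{-1}$, this is exactly $\zeta_{F_v}(s-1)/\zeta_{F_v}(s)$.

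Next I would dispatch the complex case, which is the easiest: by \lemref{l:alpha_v}(iii) the integral is $2\pi\int_0^1 t^{s-2}\,dt=2\pi/(s-1)$, convergent at $t=0$ for $\re s>1$. On the other side, from $\zeta_\bC(s)=2(2\pi)^{-s}\Gamma(s)$ and $\Gamma(s)=(s-1)\Gamma(s-1)$ one gets $\zeta_\bC(s-1)/\zeta_\bC(s)=2\pi/(s-1)$, so the two agree. Finally, the real case by \lemref{l:alpha_v}(ii) gives $2\int_0^1 t^{s-2}(1-t^2)^{-1/2}\,dt$; the substitution $u=t^2$ converts this (the factor $2$ cancelling the Jacobian $\tfrac12 u^{-1/2}$) into the Beta integral $\int_0^1 u^{(s-1)/2-1}(1-u)^{1/2-1}\,du=B\bigl(\tfrac{s-1}{2},\tfrac12\bigr)=\sqrt{\pi}\,\Gamma\bigl(\tfrac{s-1}{2}\bigr)/\Gamma\bigl(\tfrac s2\bigr)$, using $\Gamma(\tfrac12)=\sqrt{\pi}$. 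Comparing with $\zeta_\bR(s)=\pi^{-s/2}\Gamma(s/2)$, the $\pi$-powers combine as $\pi^{-(s-1)/2+s/2}=\pi^{1/2}$ and one obtains $\zeta_\bR(s-1)/\zeta_\bR(s)=\sqrt{\pi}\,\Gamma\bigl(\tfrac{s-1}{2}\bigr)/\Gamma\bigl(\tfrac s2\bigr)$, matching the integral.

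Each case is mechanical, and the only genuine obstacle is the real place: one must recognize the Beta integral after the change of variables and then carefully match the $\pi$-powers and $\Gamma$-factors against the chosen normalization of $\zeta_\bR$ (this is exactly where the special choice of Haar measure in Subsect.~\ref{sss:local measures} earns its keep). It is also worth noting that the hypothesis $\re s>1$ is what simultaneously guarantees convergence of all three integrals — the geometric series non-Archimedeanly, the pole at $t=0$ complexly, and the Beta integral really.
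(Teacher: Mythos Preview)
Your proof is correct and is exactly what the paper has in mind: the paper's entire proof is the one-line remark that the statement ``immediately follows from \lemref{l:alpha_v} (in the case $F_v=\bR$ one uses the relation between the $\mathrm{B}$-function and the $\Gamma$-function),'' and you have carried out precisely those three case-by-case computations in full detail.
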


\subsubsection{The convolution inverse of $\alpha_v\,$}
\begin{prop}   \label{p:beta_v}
(i) There exists a (unique) distribution $\beta_v$ on $\Div (F_v)$ supported on $\Div_+ (F_v)$ such that $\alpha_v *\beta_v$ equals the $\delta$-measure at the unit of $\Div (F_v)$. 

(ii) The Mellin transform of $\beta_v$ equals  $\zeta_{F_v}(s)/\zeta_{F_v}(s-1)$, where $\zeta_{F_v}$ is defined in \propref{p:Mellin}.

(iii) If $F_v$ is non-Archimedean then 
\begin{equation}   \label{e:nonArch beta}
\beta_v=\delta_0+\sum_{n=1}^\infty  (1-q_v)\cdot\delta_n\,
\end{equation}
where $\delta_n$ is the delta-measure at $n$ and $q_v$ is the order of the residue field.

(iv) If $F_v=\bR$ then 
\begin{equation}   \label{e:R beta}
\beta_v=-\pi^{-1}\cdot (1-t^2)_+^{-3/2}\cdot t^{-1}dt\, , 
\end{equation}
where $t$ is the coordinate on $\bR_{>0}$ and $(1-t^2)_+^{-3/2}$ is regularized in the usual way\footnote{That is, one considers $(1-t^2)_+^s$ as a holomorphic function in the half-plane $\re s>-1$ with values in the space of generalized functions of $t$, then one extends this function meromorphically to all $s$, and finally, one sets $s=-3/2$. One can check that the scalar product of $(1-t^2)_+^{-3/2}\cdot dt$ with any smooth compactly supported function $h$ on $\bR_{>0}$ equals $\int_0^1( 1-t^2)^{-3/2}(h(t)-h(1))dt$.},
as explained in \cite[I.3.2]{Gelf}.

(v) If $F_v\simeq\bC$ then 
\begin{equation}  \label{e:C beta}
\beta_v=-(2\pi )^{-1}\delta'(t-1)\cdot t^{-1}dt\, .
\end{equation}
\end{prop}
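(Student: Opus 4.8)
The plan is to use the fact that the Mellin transform converts the multiplicative convolution on $\Div(F_v)$ into pointwise multiplication of functions of $s$, and sends the $\delta$-measure at the unit to the constant function $1$. Granting this, statements (i) and (ii) become essentially equivalent on a common strip of convergence: \propref{p:Mellin} already gives the Mellin transform of $\alpha_v$ as $\zeta_{F_v}(s-1)/\zeta_{F_v}(s)$, whose reciprocal is precisely the $\zeta_{F_v}(s)/\zeta_{F_v}(s-1)$ demanded in (ii). So the actual content is to (a) exhibit a genuine distribution $\beta_v$ supported on $\Div_+(F_v)$ with the prescribed Mellin transform, thereby proving existence in (i) together with the explicit formulas (iii)--(v), and (b) justify that the Mellin identity forces the convolution identity, which gives uniqueness.

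In the non-Archimedean case this is purely formal. Here $\Div(F_v)$ identifies with $\bZ$ and $\Div_+(F_v)$ with $\bZ_{\ge 0}$, so the algebra of distributions supported on $\Div_+(F_v)$ is the power series ring $E[[z]]$, where $z$ is the class of $\delta_1$ and the Mellin transform is the substitution $z=q_v^{-s}$. By \lemref{l:alpha_v}(i), $\alpha_v$ corresponds to $1+\sum_{n\ge 1}(q_v^n-q_v^{n-1})z^n=(1-z)/(1-q_v z)$, which is a unit of $E[[z]]$ since its constant term is $1$. Its inverse $(1-q_v z)/(1-z)=1+(1-q_v)\sum_{n\ge 1}z^n$ is exactly formula~\eqref{e:nonArch beta}, proving (i) and (iii) at once; setting $z=q_v^{-s}$ and comparing with $(1-q_v^{-s})/(1-q_v^{1-s})$ yields (ii).

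For the Archimedean cases I would verify \eqref{e:R beta} and \eqref{e:C beta} directly, using the coordinate $t$ on $\Div(F_v)\cong\bR_{>0}$, for which $\Div_+(F_v)=(0,1]$ and the two distributions are manifestly supported there. For $F_v=\bR$ the substitution $u=t^2$ turns $\int_0^1 t^{s-1}(1-t^2)_+^{-3/2}\,dt$ into $\tfrac12\mathrm{B}(s/2,-1/2)=\tfrac12\,\Gamma(s/2)\Gamma(-1/2)/\Gamma((s-1)/2)$, interpreted by the same analytic continuation in the exponent that defines the regularization; with $\Gamma(-1/2)=-2\sqrt{\pi}$ and the prefactor $-\pi^{-1}$ this gives $\pi^{-1/2}\Gamma(s/2)/\Gamma((s-1)/2)=\zeta_\bR(s)/\zeta_\bR(s-1)$. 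For $F_v\simeq\bC$ the distributional pairing $\langle \delta'(t-1),\,t^{s-1}\rangle=-(s-1)$ immediately gives $(2\pi)^{-1}(s-1)=\zeta_\bC(s)/\zeta_\bC(s-1)$. This establishes (ii), hence (iv) and (v).

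Finally, to pass from the Mellin identity back to (i): in the variable $x=-\log t$ both $\alpha_v$ and $\beta_v$ are supported on $[0,\infty)$ and the Mellin transform becomes the Laplace transform, which converges in a common right half-plane (here $\re s>1$) and is injective on distributions supported on a half-line. Since the product of the two transforms is the constant $1$, which is also the Laplace transform of the $\delta$ at the unit, injectivity yields $\alpha_v*\beta_v=\delta$ and its uniqueness. I expect the \emph{main obstacle} to be the Archimedean analysis: making rigorous sense of the regularized distributions $(1-t^2)_+^{-3/2}$ and $\delta'(t-1)$ as elements of the convolution algebra supported on $(0,1]$, verifying convergence of the Mellin integrals on a common strip, and justifying the injectivity that turns the Mellin identity into the convolution identity. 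The non-Archimedean case, by contrast, reduces to a one-line power-series computation.
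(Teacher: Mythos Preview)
Your proposal is correct and follows essentially the same approach as the paper: define $\beta_v$ by the explicit formulas \eqref{e:nonArch beta}--\eqref{e:C beta}, verify (ii) directly (using the $\mathrm{B}$--$\Gamma$ relation in the real case), and then deduce (i) from the fact that the Mellin transforms of $\alpha_v$ and $\beta_v$ multiply to $1$. Your write-up is more detailed than the paper's (which dispatches the argument in two sentences), and your explicit handling of the non-Archimedean case as a formal power-series inversion and your care about injectivity of the Mellin/Laplace transform are welcome elaborations, but the strategy is the same.
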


\begin{proof} 
Define $\beta_v$ by one of the formulas~\eqref{e:nonArch beta}-\eqref{e:C beta}. It is easy to check that this $\beta_v$ has property (ii) (in the case
$F_v=\bR$ use the relation between the  $\rm B$-function and the $\Gamma$-function).

The Mellin transforms of both $\alpha_v$ and $\beta_v$ are defined if $\re s$ is big enough, and they are inverse to each other. So $\beta_v$ is inverse to $\alpha_v$ in the sense of convolution.
\end{proof} 

\subsubsection{Proof of \propref{p:alpha invertible}}   \label{sss:beta}
Let $\beta_v$ be as in \propref{p:beta_v}. Define a distribution $\beta$ on $\Div (F)$ by
\begin{equation}    \label{e:beta explicit}
\beta=\mes (\bA/F)\cdot (\underset{v}\otimes\,\beta_v)\, ,
\end{equation}
where $\bA$ is equipped with the product of the Haar measures on the fields $F_v\,$. By \eqref{e:alpha explicit}, $\beta$ is  inverse to $\alpha$ in the sense of convolution. \qed

\subsection{The operator $(M^K)^{-1}$ in the function field case}    \label{ss: funct fields}
In Subsect.~\ref{ss:beta} we gave a description of $(M^K)^{-1}$ for any global field $F$. Now we will make it slightly more explicit in the function field case.

Let $F$ be the field of rational functions on $X$, where $X$ is a geometrically connected smooth projective curve over $\bF_q$ of genus $g_X\,$. Then $\Div (F)=\Div (X)$. The set of closed points of $X$ will be denoted by $|X|$.

\subsubsection{The algebra $A$}   \label{sss:algebra A}
For $D\in\Div (X)$ let $\delta_D$ denote the corresponding $\delta$-measure on $\Div (X)$. In particular, we have $\delta_x$ for every $x\in |X|$.

By \remref{r:A for funct fields}, the algebra $A$ from Subsect.~\ref{sss:A} is the completed semigroup algebra of the monoid $\Div_+ (X)$ (or equivalently, the algebra of formal power series in $\delta_x\,$, $x\in |X|$).

\subsubsection{The element $\beta\in A$}
In \corref{c:alpha invertible} we defined an element $\beta\in A$. \propref{p:beta_v} and formula~\eqref{e:beta explicit} describe it explicitly.
Let us now reformulate this description slightly.

Note that if the Haar measure on $\bA$ is normalized by the condition $\mes (O_\bA )=1$ then $\mes (\bA/F )=q^{g_X-1}\,$. So from
\propref{p:beta_v}(ii) and formula~\eqref{e:beta explicit} one gets
\begin{equation}   \label{e:1beta}
\beta =q^{g_X-1}\cdot\U_0 * \U_1^{-1},
\end{equation}
where $\U_n\in A$ is defined by 
\begin{equation}  \label{e:Upsilon}
\U_n:=\prod_{x\in |X|}(1-q_x^n\cdot \delta_x)^{-1}=\sum_{D\in\Div_+(X)}q^{n\cdot\deg D}\cdot\delta_D\, .
\end{equation}
(The letter $\U$ is used here to remind of $L$-functions).

\subsubsection{The action of $A$ on $\Cp^K\,$}  
We identify  $\C^K$ with the space of functions on $\Pic X$, i.e., on the group of isomorphism classes of line bundles on $X$.
As already mentioned in Subsect.~\ref{sss:A}, the algebra $A$ acts on the subspace $\Cp^K\subset\C^K$ by convolution. The element $\delta_D\in A$ corresponding to 
$D\in\Div_+(X)$ acts on $\Cp^K$ as follows:
\begin{equation}   \label{e:action of delta}
(\delta_D*\varphi )(\cM )=\varphi (\cM (-D)), \quad \varphi\in\Cp^K\, ,\; \cM\in\Pic X\, .
\end{equation}

\subsubsection{Formulas for $(M^K)^{-1}$}  
Combining \corref{c:alpha invertible} and formula~\eqref{e:1beta}, we see that for any $u\in\Cm^K$ one has
\begin{equation}    \label{e:needed}
(M^K)^{-1} (u )=q^{g_X-1}\cdot\U_0 * \U_1^{-1} *w,
\end{equation}
where $w\in\Cp^K$ is defined by
\begin{equation}   \label{e:2needed}
w (\cM):=q^{2\cdot\deg\cM}\cdot u (\cM^{-1}).
\end{equation}

\section{The restriction of $\B$ to $\cA_c^K$}   \label{s:the form on K-invariants}
We keep the notation of Subsect.~\ref{ss:divisor notation}. 

\subsection{Haar measures on $G(\bA )$, $N(\bA )$, and $\Div (F)$}
\subsubsection{The Iwasawa map $G(\bA )/N(\bA)\to\Div (F)$} \label{sss:Iwasawa}
By this we mean the unique map 
\begin{equation}   \label{e:Iwasawa map}
\Iw :G(\bA )/N(\bA)\to\bA^\times/O_{\bA}^\times=\Div (F)
\end{equation}
such that $K\cdot\diag (a,a^{-1})$  goes to  $\bar a\in\bA^\times/O_{\bA}^\times$.

\subsubsection{The Haar measure $\nu_\gamma\,$}    \label{sss:nu}
If $\gamma$ is a Haar measure on $G(\bA )/N(\bA )$ then its direct image under the map \eqref{e:Iwasawa map} has the form $||x||^2\cdot dx$ for some Haar measure $dx$ on $\bA^\times/O_{\bA}^\times=\Div (F)$. This Haar measure on $\Div (F)$ will be denoted by $\nu_\gamma\,$.

\subsubsection{The Haar measure $\nu_{\mu/\mu'}\,$}   \label{sss:2nu}
Now let $\mu$ be a Haar measure on $G(\bA )$ and $\mu'$ a Haar measure on $N(\bA )=\bA$. They define a $G(\bA )$-invariant measure 
$\mu/\mu'$ on $G(\bA )/N(\bA )$ and therefore a Haar measure $\nu_{\mu/\mu'}$ on $\Div (F)$.

\begin{rem}    \label{r:comp}
Suppose that $F$ is a function field, $\mu$ is such that $\mes G(O_\bA)=1$, and $\mu'$ is such that $\mes O_\bA=1$. Then $\nu_{\mu/\mu'}\,$ is the standard Haar measure on the discrete group $\Div (F)$ (i.e., each element of $\Div (F)$ has measure 1).
\end{rem}

\subsection{The generalized function $r$ on $\Div (F)$}
Fix a Haar measure $\mu$ on $G(\bA )$. Then we have the bilinear form $\B$ on $\cA_c\,$. In \propref{p:non-naive} below we will write an explicit formula for its restriction to $\cA_c^K$. This description involves a certain generalized function $r$ on $\Div (F)$ depending on the choice of $\mu$.

Before reading the definition of $r$ given in Subsect.~\ref{sss:def r} below, the reader may prefer to have a look
at the very understandable statement of Subsect.~\ref{sss:our situation}.

\subsubsection{Definition of $r$}   \label{sss:def r}
Let $\mu'$ be the Haar measure on $\bA$ such that $\mes (\bA/F)=1$. By Subsect.~\ref{sss:2nu}, we have a Haar measure $\nu_{\mu/\mu'}$ on 
$\Div(F)$. Let $\beta$ be the distribution on $\Div (F)$ defined in \corref{c:alpha invertible}. Now define a generalized function $r$ on $\Div (F)$ by
\[
r:=\frac{\beta}{\nu_{\mu/\mu'}}\;.
\]

\subsubsection{Explicit description of $r$}      \label{sss:expl r}
 In Subsect.~\ref{sss:local measures} we fixed a Haar measure on each completion $F_v\,$. Their product defines a Haar measure 
 $\tilde\mu'$ on $\bA$. By \eqref{e:beta explicit}, we have
 \[
 \beta=\frac{\tilde\mu'}{\mu'}\cdot\underset{v}\otimes\beta_v\, ,
 \]
 where $\beta_v$ is the distribution on $\Div (F_v)$ defined by  \eqref{e:nonArch beta}-\eqref{e:C beta}. So one gets the following formula expressing $r$ as a tensor product of explicit local factors:
 \begin{equation}   \label{e:r-tilde}
 r=\frac{\tilde\beta}{\nu_{\mu/\tilde\mu'}}\; , \quad \tilde\beta:=\underset{v}\otimes\beta_v\;.
\end{equation}

\subsubsection{The function field case} \label{sss:our situation}
In this case $\Div (F)$ is discrete, so there is no difference between generalized functions on $\Div (F)$ and usual ones.
Suppose that the Haar measure $\mu$ on $G(\bA )$ is chosen so that $\mes K=1$. 
Then for any effective divisor $D\in \Div (F)$ (which is the same as a finite subscheme of $X$) one has
\begin{equation}   \label{e:r(D)-bis}
r(D)=\prod_{x\in D_{\red}}(1-q_x),
\end{equation}
and if $D$ is not effective then $r(D)=0$. This follows from formulas~\eqref{e:r-tilde} and \eqref{e:nonArch beta} combined with \remref{r:comp}.

\subsubsection{A remark on the number field case}
If $F$ is a number field then the Archimedean local factors $\beta_v$ from formula \eqref{e:r-tilde} are distributions which are \emph{not} locally integrable, see formulas ~\eqref{e:R beta}-\eqref{e:C beta}. So the generalized function $r$ is \emph{not} a usual one.

\subsection{The restriction of $\B$ to $\cA_c^K$}
Fix a Haar measure on $G(\bA )$. Then we have the bilinear forms $\B_{naive}$ and $\B$ on $\cA_c\,$ defined by \eqref{e:Bnaive} and 
\eqref{e:def of B}.  We also have the generalized function $r$ on $\Div (F)$ defined in Subsect.~\ref{sss:def r}.

\begin{prop}   \label{p:non-naive}
 For any $f_1,f_2\in\cA_c^K$ one has
 \begin{equation}   \label{e:non-naive}
\B_{naive}(f_1,f_2)-\B (f_1,f_2)=\int\limits_{(G\times G)(\bA)/H(F)}  r (\Iw (x_1)\cdot\Iw(x_2))f_1(x_1)f_2(x_2) dx_1 dx_2\,,
\end{equation}
where $\Iw :G(\bA )\to\bA^\times/O_{\bA}^\times=\Div (F)$ is the Iwasawa map (see Subsect.~\ref{sss:Iwasawa}) and $H\subset G\times G$ is the algebraic subgroup formed by
pairs $(b_1,b_2)\in B\times B$ such that $b_1b_2\in N$.
\end{prop}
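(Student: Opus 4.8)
The plan is to start from the definition of $\B$ in Subsect.~\ref{ss:def of B}, which gives at once
\[
\B_{naive}(f_1,f_2)-\B(f_1,f_2)=\langle M^{-1}\CT(f_1),\CT(f_2)\rangle,
\]
so the whole task is to identify this pairing with the double integral in the statement. Since $f_1,f_2\in\cA_c^K$, both constant terms lie in $\Cm^K$, and I may substitute the explicit description of $(M^K)^{-1}$ from \corref{c:alpha invertible}: $(M^K)^{-1}(u)=\beta*w$ with $w(x)=||x||^{-2}u(x^{-1})$, where $\beta$ is the convolution inverse of $\alpha$. I will also use that $\beta=r\cdot\nu_{\mu/\mu'}$ by the definition of $r$ in Subsect.~\ref{sss:def r}, so that $r$ is exactly the kernel of convolution by $\beta$ against the measure $\nu_{\mu/\mu'}$.

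First I would analyze the right-hand side. The subgroup $H$ consists precisely of the pairs $(tn_1,t^{-1}n_2)$ with $t\in T$ and $n_1,n_2\in N$, giving a short exact sequence $1\to N\times N\to H\to T\to 1$. Because $\Iw$ factors through $G(\bA)/N(\bA)$ and $\Iw(xt)=\Iw(x)\cdot\bar s$ for $t=\diag(s,s^{-1})$, the product $\Iw(x_1)\cdot\Iw(x_2)$ is invariant under right translation by the anti-diagonally embedded $T(\bA)$, hence is well defined on $(G\times G)(\bA)/H(F)$; this is what makes the integrand legitimate. I would then fiber $(G\times G)(\bA)/H(F)$ over $[(G(\bA)/N(\bA))\times(G(\bA)/N(\bA))]/T(F)$, with $T(F)$ acting anti-diagonally, along the fiber $(N(\bA)/N(F))^2$. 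Since $\mes(N(\bA)/N(F))=1$ (the normalization of Subsect.~\ref{ss:CT}), integrating out the unipotent directions replaces $f_i(x_i)$ by the constant term $(\CT f_i)(y_i)$, and the right-hand side becomes
\[
\int_{[(G(\bA)/N(\bA))^2]/T(F)} r(\Iw(y_1)\Iw(y_2))\,(\CT f_1)(y_1)(\CT f_2)(y_2).
\]

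Next I would pass to $\Div(F)$. Restricting to $K$-invariant data and using the Iwasawa map, each factor $G(\bA)/N(\bA)$ maps to $\Div(F)$, and by the defining property of $\nu_{\mu/\mu'}$ (Subsect.~\ref{sss:2nu}) the invariant measure pushes to $||x||^2\,dx$ with $dx=\nu_{\mu/\mu'}$; the residual anti-diagonal $T(F)$-quotient becomes the passage from $\Div(F)$ to $\Div(F)/F^\times$. Introducing the coordinates $a:=\Iw(y_2)$ and $b:=\Iw(y_1)\Iw(y_2)$, so that $\Iw(y_1)=ba^{-1}$ and $a$ now ranges over $\Div(F)/F^\times$ while $b$ ranges over $\Div(F)$, the integral takes the shape of an integration against $a$ of a $b$-integral of $r(b)\,(\CT f_1)(ba^{-1})$ weighted by $\nu_{\mu/\mu'}$. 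The inner $b$-integral is convolution by $\beta=r\cdot\nu_{\mu/\mu'}$, the inversion $x\mapsto x^{-1}$ and the norm factor $||x||^{-2}$ in $w$ are produced by the $||\cdot||^2$ factors coming from $\nu_{\mu/\mu'}$, and the outer $a$-integral is the pairing of Subsect.~\ref{ss:M}. Thus the expression equals $\langle\beta*w_1,\CT(f_2)\rangle=\langle(M^K)^{-1}\CT(f_1),\CT(f_2)\rangle$, which is the desired left-hand side. (In the number field case $r$ is only a generalized function, so the $b$-integral is read distributionally, but the argument is unchanged.)

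The step I expect to be the main obstacle is the measure bookkeeping in this last passage: one must reconcile the three normalizations—$\mu$ on $G(\bA)$, the measure $\mu'$ with $\mes(\bA/F)=1$ on $N(\bA)$, and $\nu_{\mu/\mu'}$ on $\Div(F)$—and verify that the factor $||x||^2$ built into the definition of $\nu_{\mu/\mu'}$ cancels precisely against the factor $||x||^{-2}$ in the formula for $w$, so that the convolution kernel comes out to be exactly $\beta$ with no stray power of $||x||$ surviving. A secondary point requiring care is checking that the summation over the anti-diagonally embedded $T(F)$ reproduces convolution by $\beta$ on $\Div(F)$ itself (rather than on $\Div(F)/F^\times$), since it is this genuine convolution on $\Div(F)$ that underlies \corref{c:alpha invertible}.
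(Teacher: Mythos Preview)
Your proposal is correct and follows essentially the same route as the paper's proof. Both arguments reduce the right-hand side to the integral
\[
\int\limits_{(\bA^\times \times \bA^\times)/(F^\times)_{anti-diag}} r(xy)\cdot ||xy||^2\cdot\varphi_1(x)\varphi_2(y)\,dx\,dy,
\qquad \varphi_i:=\CT(f_i),
\]
and then identify this with $\langle (M^K)^{-1}\varphi_1,\varphi_2\rangle$ via \corref{c:alpha invertible} and the definition of $r$. The paper simply asserts this reduction in one line (equipping $\bA^\times$ with the Haar measure pushing to $\nu_{\mu/\mu'}$), whereas you spell out the intermediate fibration over $(N(\bA)/N(F))^2$ and the change of variables $(x,y)\rightsquigarrow(a,b)=(y,xy)$; but the content is the same. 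Your anticipated ``main obstacle'' --- the measure bookkeeping --- is exactly what the paper handles by its choice of Haar measure on $\bA^\times$, and your ``secondary point'' about the anti-diagonal $T(F)$ is the observation that under $(x,y)\mapsto(tx,t^{-1}y)$ the product $b=xy$ is fixed while $a=y$ carries the $F^\times$-action, so the $b$-integral is genuinely over $\Div(F)$.
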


\begin{rem}
Let us explain why the r.h.s. of \eqref{e:non-naive} makes sense. The generalized function
\begin{equation}   \label{e:before summation}
(x_1,x_2)\mapsto r (\Iw (x_1)\cdot\Iw(x_2)), \quad (x_1,x_2)\in (G\times G)(\bA)/H(F)
\end{equation}
is well-defined because the map $\Iw:K\backslash G(\bA )\to\Div (F)$ is a submersive map between $C^\infty$ manifolds (0-dimensional ones if $F$ is a function field). It remains to show that its support is proper over $(G\times G)(\bA)/(G\times G)(F)$. This follows from the inclusion 
$\Supp (r)\subset\Div_+(F)$, properness of the map $(G\times G)(\bA)/H(F)\to (\bA^\times /O_{\bA}^\times)\times\bR$ defined by 
$(x_1,x_2)\mapsto (\Iw (x_1)\cdot\Iw(x_2), \deg\Iw (x_1))$, and the fact that $\deg\Iw (x)$ is bounded above if the image of $x$ in $G(\bA )/G(F)$ belongs to a fixed compact.
\end{rem}

\begin{proof} 
Equip $\bA^\times$ with the Haar measure whose pushforward to $\bA^\times/O_{\bA}^\times$ equals the measure $\nu_{\mu/\mu'}$ from
Subsect.~\ref{sss:def r}.
 Then the r.h.s. of \eqref{e:non-naive} equals
\begin{equation}   \label{e:the-integral}
\int\limits_{(\bA^\times \times \bA^\times)/(F^\times )_{anti-diag}} r(xy)\cdot ||xy||^2\cdot\varphi_1(x)\varphi_2(y) dxdy, 
\end{equation}
where $\varphi_i:=\CT (f_i)\in\Cm^K$ and $(F^\times )_{anti-diag}:=\{ (t,t^{-1})\,|\,t\in F^\times\}$.

By \eqref{e:def of B}, the l.h.s. of \eqref{e:non-naive} equals
\[
\langle (M^K)^{-1}\varphi_1\, ,\varphi_2\rangle=\int\limits_{\bA^\times/F^\times} ((M^K)^{-1}\varphi_1)(y)\varphi_2(y)\cdot ||y||^2dy\, .
\]
By \corref{c:alpha invertible}, $(M^K)^{-1}\varphi_1=\beta*w$, where $w(x)=||x||^{-2}\cdot \varphi_1 (x^{-1})$; in other words,
\[
((M^K)^{-1}\varphi_1)(y)=\int\limits_{\bA^\times}r(xy)\cdot ||x||^2\cdot\varphi_1 (x)dx\, .
\]
So the l.h.s. of \eqref{e:non-naive} also equals \eqref{e:the-integral}, and we are done.
\end{proof}

\begin{cor}   \label{r:non-naive}
For any $f_1,f_2\in\cA_c^K$ one has
\begin{equation}   \label{e:2non-naive}
\B_{naive}(f_1,f_2)-\B (f_1,f_2)=\int\limits_{(G\times G)(\bA)/(G\times G)(F)} \cS (x_1,x_2)f_1(x_1)f_2(x_2) dx_1 dx_2\,.
\end{equation} 
where $\cS$ is the pushforward of the generalized function \eqref{e:before summation} under the natural map
$(G\times G)(\bA)/H(F)\to (G\times G)(\bA)/(G\times G)(F)$. \qed
\end{cor}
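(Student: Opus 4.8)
The plan is to obtain \eqref{e:2non-naive} as a formal consequence of \propref{p:non-naive} by pushing the generalized function \eqref{e:before summation} along the projection
\[
\pi \colon (G\times G)(\bA)/H(F)\longrightarrow (G\times G)(\bA)/(G\times G)(F).
\]
First I would record that this map is well-defined: since $H\subset G\times G$ is a closed algebraic subgroup, we have $H(F)\subset (G\times G)(F)$, so $\pi$ is the natural quotient by the right action of the discrete set $(G\times G)(F)/H(F)$. The integrand $f_1(x_1)f_2(x_2)$ appearing in \eqref{e:non-naive} is automatically pulled back from the $(G\times G)(F)$-quotient, because $f_1,f_2\in\cA_c^K$ are functions on $G(\bA)/G(F)$. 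Thus the only non-$(G\times G)(F)$-invariant factor on the right-hand side of \eqref{e:non-naive} is the kernel \eqref{e:before summation} itself.

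Next I would invoke the projection formula for the submersion $\pi$: for a generalized function $\Phi$ on $(G\times G)(\bA)/H(F)$ whose support is proper over the target (which is exactly the content of the \emph{Remark} following \propref{p:non-naive}), and for a smooth function $\Psi$ pulled back from the target, one has
\[
\int_{(G\times G)(\bA)/H(F)} \Phi\cdot (\pi^*\Psi)
=\int_{(G\times G)(\bA)/(G\times G)(F)} (\pi_*\Phi)\cdot \Psi.
\]
Applying this with $\Phi$ equal to \eqref{e:before summation} and $\Psi=f_1\otimes f_2$, and \emph{defining} $\cS:=\pi_*\bigl((x_1,x_2)\mapsto r(\Iw(x_1)\cdot\Iw(x_2))\bigr)$, converts the right-hand side of \eqref{e:non-naive} into the right-hand side of \eqref{e:2non-naive}. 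Concretely, $\cS$ is the fiberwise sum: at a point of the target represented by $(x_1,x_2)$, its value is $\sum_{\gamma} r(\Iw(x_1\gamma_1)\cdot\Iw(x_2\gamma_2))$, where $(\gamma_1,\gamma_2)$ runs over a set of representatives for $(G\times G)(F)/H(F)$; the support condition from the Remark guarantees this sum is locally finite, so $\cS$ is a well-defined generalized function.

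The main obstacle, and the only point requiring genuine care, is the properness/local-finiteness needed to justify the pushforward and the interchange of summation and integration. This is not, however, new work: it is precisely the assertion proved in the Remark following \propref{p:non-naive}, namely that $\Supp(r)\subset\Div_+(F)$ together with properness of $(x_1,x_2)\mapsto(\Iw(x_1)\cdot\Iw(x_2),\deg\Iw(x_1))$ forces the support of \eqref{e:before summation} to be proper over the $(G\times G)(F)$-quotient. Granting that, the corollary is immediate: the pushforward $\pi_*$ is well-defined, and the equality of integrals is the projection formula. I would therefore present the proof as a one-line application of \propref{p:non-naive} and the definition of $\cS$, which is exactly why the authors mark it with \qed in the statement rather than supplying a separate argument.
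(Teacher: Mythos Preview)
Your proposal is correct and matches the paper's approach exactly: the corollary is marked with \qed\ in the statement itself because it is the immediate pushforward reformulation of \propref{p:non-naive}, and your explanation---that $f_1\otimes f_2$ is pulled back from the $(G\times G)(F)$-quotient, so the projection formula along $\pi$ converts the integral over $(G\times G)(\bA)/H(F)$ into one against $\cS:=\pi_*$(kernel), with the requisite properness already supplied by the Remark---is precisely the intended reasoning.
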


\subsection{Geometric interpretation}
The function $\cS$ from \corref{r:non-naive} is defined in terms of the diagram
\begin{equation}   \label{e:original diagram}
(G\times G)(\bA)/(G\times G)(F)\leftarrow (G\times G)(\bA)/H(F)\to\bA^\times/O_{\bA}^\times=\Div (F),
\end{equation}
in which the right arrow is the map $(x_1,x_2)\mapsto \Iw (x_1)\cdot\Iw(x_2)$. Let us give a geometric interpretation of this diagram.

\subsubsection{Matrices of rank $1$} Set $\bX:=(G\times G)/H$; this is an algebraic variety\footnote{The variety $\bX$ and its generalizations for arbitrary reductive groups (see \cite[Sect.~2.2]{BK}) play an important role in \cite{BK}.} equipped with an action of $G\times G$. Diagram~\eqref{e:original diagram} can be rewritten as
\begin{equation}     \label{e:modified diagram}
(G\times G)(\bA)/(G\times G)(F)\leftarrow (G\times G)(\bA)\underset{(G\times G)(F)}\times \bX (F)\to\Div (F)\, .
\end{equation}
We identify the $(G\times G)$-variety $\bX$ with the variety of $(2\times 2)$-matrices of rank $1$ via the map
\[ 
(g_1,g_2)\mapsto g_1\cdot \left(\begin{matrix} 0&1\\0&0\end{matrix}\right)\cdot g_2^{-1},\quad g_1,g_2\in G=SL(2).
\]

Let us describe the right arrow in \eqref{e:modified diagram}. For each place $v$, we have the `norm map'
\begin{equation} \label{e:norm map}
\nu_v:\bX (F_v)\to F_v^\times/O_v^\times=\Div (F_v);
\end{equation} 
namely, if $A$ is a  $(2\times 2)$-matrix over $F_v$ of rank $1$ then $\nu_v (A)$ is the class of any $a\in F_v^\times$ such that the operator $a^{-1}A$ has norm $1$.\footnote{In the Archimedean case one can use either the Hilbert-Schmidt norm or the operator norm (with respect to the Hilbert norm on $F_v^2$); on matrices of rank $1$ the two norms are the same.}  It is easy to check that the right arrow in \eqref{e:modified diagram} equals the composition of the action map $(G\times G)(\bA)\underset{(G\times G)(F)}\times \bX (F)\to\bX (\bA )$ and the map $\nu :\bX (\bA )\to\Div (F)$ obtained from the maps \eqref{e:norm map}.

\subsubsection{Function field case} \label{sss:funct case}
Let $F$ be a function field, and let $X$ be the corresponding connected smooth projective curve over a finite field. 
The function $\cS$  from \corref{r:non-naive} is $K$-invariant, so one can consider it as a function on the set of isomorphism classes of pairs 
$(\cL_1,\cL_2)$, where $\cL_1$ and $\cL_2$ are rank 2 vector bundles on $X$ with trivialized determinants.

\begin{prop}  \label{p:S-r}
In this situation
\begin{equation}    \label{e:S-r}
\cS (\cL_1,\cL_2)=\sum_f r(D_f),
\end{equation}
where $f$ runs through the set of  morphisms $\cL_1\to\cL_2$ of  generic rank 1 and $D_f$ is the divisor of zeros of $f$.
\end{prop}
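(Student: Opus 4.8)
The plan is to unwind the definition of $\cS$ from \corref{r:non-naive} as a sum over the fibers of the left arrow in \eqref{e:modified diagram}, and then to match each summand with a term $r(D_f)$. First I would identify these fibers. Fixing an adelic representative $g=(g_1,g_2)$ of the isomorphism class $(\cL_1,\cL_2)$, the fiber of the projection $(G\times G)(\bA)\underset{(G\times G)(F)}\times\bX(F)\to (G\times G)(\bA)/(G\times G)(F)$ is identified with $\bX(F)$, the set of rank $1$ matrices over $F$. This uses that $(G\times G)(F)$ acts transitively on rank $1$ matrices over $F$ with stabilizer $H(F)$ (transitivity holds because $SL(2,F)$ acts transitively on $F^2\setminus\{0\}$), so that $(G\times G)(F)/H(F)=\bX(F)$. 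Combined with the description of the right arrow in \eqref{e:modified diagram} as $\nu\circ(\text{action})$, this yields
\[
\cS(\cL_1,\cL_2)=\sum_{\xi\in\bX(F)} r\bigl(\nu(g_1\,\xi\,g_2^{-1})\bigr),
\]
where $\nu:\bX(\bA)\to\Div(F)$ is assembled from the local norm maps \eqref{e:norm map}.

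Next I would set up the dictionary between $\bX(F)$ and morphisms. Using the generic trivializations of $\cL_1$ and $\cL_2$, a rank $1$ matrix $\xi\in\bX(F)$ is the same datum as a homomorphism of generic fibers, i.e.\ a rational map of generic rank $1$ from $\cL_1$ to $\cL_2$. Such a $\xi$ extends to an honest vector-bundle morphism $f:\cL_1\to\cL_2$ precisely when, at every place $v$, it carries the lattice $\Lambda_{1,v}\subset F_v^2$ of $\cL_1$ into the lattice $\Lambda_{2,v}$ of $\cL_2$; the morphisms of generic rank $1$ are thus in bijection with those $\xi$ for which $\nu(g_1\xi g_2^{-1})$ is effective.

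The heart of the argument is the local computation identifying $\nu(g_1\xi g_2^{-1})$ with the divisor of zeros. At each place $v$ the matrices $g_{1,v},g_{2,v}$ convert the standard lattice $O_v^2$ (in which the norm map \eqref{e:norm map} is computed) into the bundle lattices $\Lambda_{1,v},\Lambda_{2,v}$, so that $\nu_v(g_{1,v}\xi g_{2,v}^{-1})$ equals the $v$-adic content of $\xi$ regarded as a map $\Lambda_{1,v}\to\Lambda_{2,v}$, namely the integer $n_v$ for which this local map is $\pi_v^{n_v}$ times a map nonzero modulo $\pi_v$. Since $\nu_v$ is $K_v$-bi-invariant this number is intrinsic, and when $\xi$ is a morphism it is exactly the order of vanishing of $f$ at $v$; hence $\nu(g_1\xi g_2^{-1})=D_f$. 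When $\xi$ does not extend to a morphism some $n_v<0$, so $\nu(g_1\xi g_2^{-1})\notin\Div_+(F)$, and since $\Supp(r)\subset\Div_+(F)$ (see \eqref{e:r(D)-bis}) that summand vanishes. Restricting the sum to the $\xi$ which are morphisms then gives \eqref{e:S-r}.

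The main obstacle is this last local computation: one must pin down which of $g_1,g_2$ acts on the source and which on the target, i.e.\ fix the transpose/inverse conventions both in the identification $\bX\cong\{\text{rank }1\text{ matrices}\}$ and in the norm map, so that the two-sided transform $g_1\xi g_2^{-1}$ measured against the standard lattice reproduces exactly the content of $\xi$ measured against the bundle lattices. (If the bookkeeping naturally produces morphisms $\cL_2\to\cL_1$ rather than $\cL_1\to\cL_2$, the trivialization of determinants gives $\cL_i^{*}\cong\cL_i$ and hence a divisor-of-zeros–preserving bijection between the two sets, so the count is unaffected.) Once the conventions are settled, the identity $\nu_v(g_{1,v}\xi g_{2,v}^{-1})=n_v$ is immediate from the definition of $\nu_v$, and the vanishing of the non-effective terms follows directly from $\Supp(r)\subset\Div_+(F)$.
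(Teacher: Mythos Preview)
Your proposal is correct and follows essentially the same route as the paper's proof: identify the fiber over $(\cL_1,\cL_2)$ with $\bX(F)$ (equivalently, with rational sections of the associated bundle $\bX_{\cL_1,\cL_2}$, i.e.\ rational maps $\cL_1\dashrightarrow\cL_2$ of generic rank~1), check that the right arrow of \eqref{e:modified diagram} sends such a map to its divisor of zeros, and use $\Supp(r)\subset\Div_+(F)$ to discard the rational maps that are not true morphisms. The paper's argument is terser and phrases the fiber in the language of associated bundles rather than your explicit adelic coordinates, but the content is the same; your extra care with the source/target conventions and the parenthetical remark about $\cL_i^*\cong\cL_i$ are fine cautions but not needed for a different idea.
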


\begin{proof}
By definition, $\cS$ is obtained from $r$ using pull-push along diagram~\eqref{e:modified diagram}. If $y$ is a point of  
$(G\times G)(\bA)/(G\times G)(F)$ corresponding to $(\cL_1 ,\cL_2)$ then the pre-image of $y$ in 
$(G\times G)(\bA)\underset{(G\times G)(F)}\times \bX (F)$ identifies with the set of rational sections of the bundle $\bX_{\cL_1,\cL_2}\to X$ associated to the $(G\times G)$-torsor $(\cL_1,\cL_2)$ and the $(G\times G)$-variety $\bX$. If one thinks of $\cL_1$, $\cL_2$ as rank 2 vector bundles with trivialized determinants then a rational section of $\bX_{\cL_1,\cL_2}$ is the same as a rational morphism $f:\cL_1\dashrightarrow \cL_2$ of (generic) rank 1. The right arrow in diagram~\eqref{e:modified diagram} associates to such $f$ the divisor $D_f$ whose multiplicity at $x\in X$ is the order of zero of $f$ at $x$ (as usual, the order of zero is negative if $f$ has a pole at $x$). Finally, since $\Supp r\subset\Div_+(F)$ only true morphisms $f:\cL_1\to\cL_2$ contribute to $\cS$.
\end{proof}

\subsubsection{Number field case} \label{sss:numb case}
Let $F$ be a number field and $O_F$ its ring of integers. Then formula \eqref{e:S-r} remains valid after the following modifications. First, $\cL_i$ is now an \emph{Arakelov} $SL(2)$-bundle, i.e., a rank 2 vector bundle on $\Spec O_F$ with trivialized determinant and with a Euclidean/Hermitian metric on $\cL_i\otimes_{O_F}F_v$ for each Archimedean place $v$ (these metrics should be compatible with the trivialization of the determinant). Second, $D_f$ is now an \emph{Arakelov} divisor whose non-Archimedean part is the scheme of zeros of $f$ (which is an effective divisor on $\Spec O_F$), and whose
Archimedean part is the collection of all Archimedean norms of~$f:\cL_1\to\cL_2\,$. Finally, both sides of \eqref{e:S-r} are now \emph{generalized} functions on the $C^\infty$-stack\footnote{For the notion of $C^\infty$-stack see \cite{BX}.} of Arakelov $(G\times G)$-bundles.

\subsection{Proof of Theorem~\ref{t:our main}}  \label{ss:proof-main}
Let $F$ be a function field, and let $X$ be the corresponding connected smooth projective curve over a finite field. 
In Theorem~\ref{t:our main} the Haar measure on $G(\bA )$ is normalized by the condition $\mes K=1$, so the function $r$ on $\Div (X)$ is given by formula~\eqref{e:r(D)-bis}. Comparing formula~\eqref{e:S-r} with Schieder's formulas~\eqref{e:Schieder}-\eqref{e:r-Schieder}, we see that 
$\cS=b_{naive}-b$, where $b$ and $b_{naive}$ are as in Subsect.~\ref{sss:b}. 
By \remref{r:Bnaive}, this is equivalent to Theorem~\ref{t:our main}.  \qed

\section{Invertibility of the operator $M:\Cp\to\Cm$}  \label{s:invertibility}
The goal of this section is to prove \propref{p:2invertibility}, which says that the operator $M:\Cp\to\Cm$ is invertible (and a bit more). We deduce it from the corresponding local statement (Proposition~\ref{p:local invertibility}). To do this, we use a slightly nonstandard variant of infinite tensor products (see Subsect.~\ref{ss:infinite tensor}); it is here that we need the algebras introduced in Subsections~\ref{ss:global ring R} and \ref{sss:local ring R}. Philosophically, introducing the algebra $\fR^\mu$ from Subsections~\ref{ss:global ring R} and its actions is a substitute for Mellin transform with respect to $T(\bA )/T(F )$, which we tried to avoid (see Subsect.~\ref{ss:avoiding Mellin}).

\medskip

We will assume that the field $E$ from Subsection~\ref{sss:E} equals~$\bC$ (this  assumption is harmless).

\subsection{Notation and conventions}
\subsubsection{}   \label{sss:the number a}
We fix some real number $A>1$. For $x\in\BA^\times$ we set $\deg x:=-\log_A ||x||$. Define $\deg :K\backslash G(\bA )/T(F)N(\bA)\to\bR$ just as in Subsect.~\ref{ss:adelic norm}.

\subsubsection{}
Recall that $\C_+$ denotes the union of the spaces $\C_{\ge Q}$, where $\C_{\ge Q}$ is the space of $K$-finite $C^\infty$ functions 
$f:G(\bA )/T(F)N(\bA)\to\bC$ such that
\begin{equation}   \label{e:Supp}
\Supp (f)\subset \{ x\in G(\bA )/T(F)N(\bA)\,|\,\deg x\ge Q\}.
\end{equation}
Similarly, we have the spaces $\C_{\le Q}$ and their union $\C_-\,$.

Let $\fC_{\ge Q}$ denote the space of $K$-finite \emph{generalized} functions\footnote{If one fixes a $G(\bA)$-invariant measure on 
$G(\bA)/T(F)N(\bA)$ then a generalized function is the same as a distribution.} $f$ on $G(\bA )/T(F)N(\bA)$ satisfying \eqref{e:Supp}. Clearly 
$\C_{\ge Q}\subset\fC_{\ge Q}\,$, and if $F$ is a function field then $\C_{\ge Q}=\fC_{\ge Q}\,$. Quite similarly, define $\fC_{\le Q}$ and $\fC_\pm\,$.

The operator $M:\C_+\to\C_-$ defined in Subsection~\ref{ss:M} naturally extends to an operator $\fC_+\to\fC_-\,$, which will still be denoted by 
$M$.

\subsubsection{}  \label{sss:change}
We will slightly \emph{change} our conventions regarding the action of $\bA^\times/F^\times$. Namely, the action of  
$\bA^\times/F^\times$ on $\C_+\, , \fC_+$ will still be defined by formula \eqref{e:t star f}, but the action of $\bA^\times/F^\times$ on $\C_-\, , \fC_-$ will be the opposite one.  Then the operator $M$ is  $(\bA^\times/F^\times)$-equivariant.

\subsubsection{}
Just as in Subsection~\ref{ss:divisor notation}, we use the notation $O_v^\times :=\{x\in F_v^\times : |x|= 1\}$ (even if $v$ is Archimedean)
 and the notation $O_\bA^\times:=\prod\limits_v O_v^\times$ (even if $F$ is a number field).

It is convenient to fix a character $\mu :O_\bA^\times/(O_\bA^\times\cap F^\times )\to\bC^\times$ once and for all. Let $\C_+^\mu\subset\C_+$ denote the $\mu$-eigenspace for the $O_\bA^\times$-action. Similarly, we have $\C_-^\mu$ and $\fC_\pm^\mu\,$.

\subsection{The algebra $\fR^\mu$}  \label{ss:global ring R}
Let $\fR_{\ge Q}$ denote the space of distributions $\eta$ on $\bA^\times/F^\times$ such that $\deg x\ge Q$ for all $x\in\Supp\eta$.
Let $\fR$ denote the union of $\fR_{\ge Q}$ for all $Q\in\bR$. Then $\fR$ is a filtered algebra with respect to convolution.

Let $e^\mu\in\fR$ be the product of $\mu^{-1}$ and the normalized Haar measure on the compact subgroup 
$O_\bA^\times/(O_\bA^\times\cap F^\times )\subset\bA^\times/F^\times$. Clearly $e^\mu$ is an idempotent.

Set $\fR^\mu:=e^\mu\cdot\fR$; this is a unital algebra. 
Let $R^\mu\subset\fR^\mu$ denote the ideal formed by smooth measures. If $F$ is a function field then $R^\mu =\fR^\mu$. 
One has
\[
 \fR^\mu_{\ge Q}\cdot\fC^\mu_{\ge Q'}\subset\fC^\mu_{\ge Q'+Q}\, , \quad \fR^\mu_{\ge Q}\cdot\fC^\mu_{\le Q'}\subset\fC^\mu_{\le Q'-Q}\, . 
\]

\subsection{Structure of the $\fR^\mu$-modules $\fC_\pm^\mu$ and $\C_\pm^\mu\,$.}
The action of $\bA^\times/F^\times$ induces an action of $\fR^\mu$ on $\fC_\pm^\mu$ and 
$\C_\pm^\mu\,$. Let us describe the structure of $\fC_\pm^\mu$ and $\C_\pm^\mu$ as $\fR^\mu$-modules equipped with $K$-action.

First, we have
\[
\fC_\pm^\mu=\bigoplus\limits_\rho \fC_\pm^{\mu,\rho}, \quad \C_\pm^\mu=\bigoplus\limits_\rho \C_\pm^{\mu,\rho},
\]
where $\rho$ runs through the set of isomorphism classes of irreducible $K$-modules, $\fC_\pm^{\mu,\rho}:=\rho\otimes\Hom_K (\rho ,\fC_\pm)$, 
$\C_\pm^{\mu,\rho}:=\rho\otimes\Hom_K (\rho ,\C_\pm)$. 

Now let $V_\pm^{\mu,\rho}\subset\fC_\pm^{\mu,\rho}$ be the subspace of those generalized functions from 
$\fC_\pm^{\mu,\rho}$ whose support is contained in $\im (K\to G(\bA)/T(F)N(\bA))$. Note that $\dim V_\pm^{\mu,\rho}<\infty$.

\begin{lem}   \label{l:things are simple}
(i) The map $\fR^\mu\otimes V_\pm^{\mu,\rho}\to\fC_\pm^{\mu,\rho}$ is an isomorphism.

(ii) The map $R^\mu\otimes V_\pm^{\mu,\rho}\to\C_\pm^{\mu,\rho}$ is an isomorphism.

(iii) The map $R^\mu\otimes_{\fR^\mu} \fC_\pm^{\mu,\rho}\to\C_\pm^{\mu,\rho}$ is an isomorphism.

(iv) The isomorphisms (i)-(iii) preserve the filtrations.
\end{lem}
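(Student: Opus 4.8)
The plan is to use the Iwasawa decomposition $G(\bA )=K\cdot T(\bA )\cdot N(\bA )$ to present $Z:=G(\bA )/T(F)N(\bA )$ as a space induced from the compact orbit $W:=\im (K\to Z)$, and thereby reduce all four assertions to a single disintegration statement along the right $\star$-action of $\bA^\times/F^\times$. The left $K$-action and the right action $t\star(-)$ of $T(\bA )/T(F)=\bA^\times/F^\times$ (see \eqref{e:t star f}) commute, the quotient $K\backslash Z$ is identified with $O_\bA^\times\backslash\bA^\times/F^\times$, and $W$ is precisely the $K$-orbit lying over the unit coset. Since $O_\bA^\times$ acts on $W$ through $T(O_\bA )\subset K$, the map $(w,t)\mapsto w\star t$ induces
\begin{equation*}
W\underset{O_\bA^\times}{\times}(\bA^\times/F^\times)\;\iso\; Z,
\end{equation*}
surjectivity being the Iwasawa decomposition and injectivity following because $w\star t$ lies over the unit coset exactly when $t\in O_\bA^\times$.

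Under this identification a $K$-finite, $\mu$-isotypic generalized function on $Z$ becomes a $\mu$-equivariant, $O_\bA^\times$-invariant generalized function on $W\times(\bA^\times/F^\times)$, and the right $\star$-action realizes the $\fR^\mu$-module structure as convolution in the second variable. The space $V_\pm^{\mu,\rho}$ is exactly the fibre datum over the unit coset, namely the space (finite-dimensional by the preceding Note, transparently so when the base $O_\bA^\times\backslash\bA^\times/F^\times$ is discrete) of $\rho$-isotypic, $\mu$-equivariant generalized functions carried by $W$. On the other side, $\fR^\mu$ is the algebra of $\mu$-equivariant distributions on $\bA^\times/F^\times$ of bounded-below degree-support, i.e. of $\mu$-twisted distributions on the base, and $R^\mu\subset\fR^\mu$ is its ideal of smooth measures. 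With the sign convention of \ref{sss:change} (opposite $\star$-action on the $\C_-,\fC_-$ side) the boundedness directions match uniformly for both signs.

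For (i) I would show $\eta\otimes v\mapsto \eta\star v$ is an isomorphism by disintegrating a given $g\in\fC_\pm^{\mu,\rho}$ over the base: translating each fibre of $g$ back to $W$ by the transitive $\star$-action yields a $V_\pm^{\mu,\rho}$-valued distribution on the base with bounded-below support, which is the sought element of $\fR^\mu\otimes V_\pm^{\mu,\rho}$, and uniqueness of the disintegration gives injectivity. Here finiteness of $\dim V_\pm^{\mu,\rho}$ is what makes the algebraic (rather than completed) tensor product suffice. Assertion (ii) is the identical argument with ``distribution'' replaced by ``smooth measure'': convolving the fibre datum $v$ with $\eta\in R^\mu$ smooths it along the base, and $\rho$-isotypic smooth functions on the compact $W$ again form $V_\pm^{\mu,\rho}$, giving $R^\mu\otimes V_\pm^{\mu,\rho}\iso\C_\pm^{\mu,\rho}$. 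Assertion (iii) is then formal: by (i) and (ii),
\begin{equation*}
R^\mu\otimes_{\fR^\mu}\fC_\pm^{\mu,\rho}\;\cong\;R^\mu\otimes_{\fR^\mu}(\fR^\mu\otimes V_\pm^{\mu,\rho})\;=\;R^\mu\otimes V_\pm^{\mu,\rho}\;\cong\;\C_\pm^{\mu,\rho}.
\end{equation*}
Assertion (iv) follows by bookkeeping: $v$ is supported in degree $0$ and the $\star$-action shifts degree-support additively, so $\eta\star v$ lies in the same filtration step as $\eta$, whence all the isomorphisms respect the degree filtrations.

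The hard part will be the analytic content of the disintegration over an Archimedean base: one must verify that every $K$-finite, $\mu$-isotypic distribution on $Z$ with bounded-below support is recovered, without loss or overcounting, as a $V_\pm^{\mu,\rho}$-valued distribution on $O_\bA^\times\backslash\bA^\times/F^\times$. This is precisely where the finiteness of $\dim V_\pm^{\mu,\rho}$ and the fact that $W$ is a single compact $K$-orbit (so that the transverse, degree direction is exactly the base direction along which $\fR^\mu$ acts) enter. In the function field case the base is discrete and the entire argument collapses to elementary linear algebra over the completed semigroup algebra of $\Div_+(F)$ from \remref{r:A for funct fields}.
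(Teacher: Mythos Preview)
Your approach is essentially the same as the paper's: both reduce everything to the Iwasawa decomposition $G(\bA)=K\cdot N(\bA)\cdot T(\bA)$, and both observe that (iii) is formal once (i) and (ii) are known. The paper's proof is literally a one-liner invoking this decomposition, whereas you spell out the induced-space structure $Z\cong W\times_{O_\bA^\times}(\bA^\times/F^\times)$ and the resulting disintegration in detail.
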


\begin{proof}
It suffices to prove the statements about the maps $\fR^\mu\otimes V_\pm^{\mu,\rho}\to\fC_\pm^{\mu,\rho}$ and 
$R^\mu\otimes V_\pm^{\mu,\rho}\to\C_\pm^{\mu,\rho}$. They
follow from the decomposition $G(\bA)=K\cdot N(\bA)\cdot T(\bA)$. 
\end{proof}

\subsection{The statements to be proved}
From now on we fix both $\mu$ and $\rho$.
The operator $M:\fC_+\to\fC_-$ induces $\fR^\mu$-module homomorphisms $\fC^{\mu,\rho}_+\to\fC^{\mu,\rho}_-$ and $\C^{\mu,\rho}_+\to\C^{\mu ,\rho}_-$, which will still be denoted by $M$. One has $M(\fC^{\mu,\rho}_{\ge Q })\subset\fC^{\mu,\rho}_{\le -Q}\,$.

\begin{prop}   \label{p:2invertibility}
(i) The operators $M:\fC^{\mu,\rho}_+\to\fC^{\mu,\rho}_-$ and $M:\C^{\mu,\rho}_+\to\C^{\mu ,\rho}_-$ are invertible.

(ii) $M^{-1} (\fC^{\mu,\rho}_{\le -Q})\subset\fC^{\mu,\rho}_{\ge Q-a(\rho )}\,$, where $a(\rho)$ depends only on the non-Archimedean local components of $\rho$. If each non-Archimedean local component of $\rho$ is trivial then one can take $a(\rho)=0$.
\end{prop}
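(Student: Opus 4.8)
The plan is to reduce the whole statement to commutative algebra over the filtered ring $\fR^\mu$, and then to a purely local computation, one place $v$ at a time. First I would exploit the equivariance arranged in Subsect.~\ref{sss:change}: with the modified action of $\bA^\times/F^\times$ the operator $M$ is $\fR^\mu$-linear, so it induces $\fR^\mu$-module homomorphisms $\fC^{\mu,\rho}_+\to\fC^{\mu,\rho}_-$ and $\C^{\mu,\rho}_+\to\C^{\mu,\rho}_-$. By \lemref{l:things are simple}(i) the source and target of the first map are free $\fR^\mu$-modules of finite rank, so proving part (i) for $\fC$ amounts to showing that $M$ is given by an invertible matrix over $\fR^\mu$, and part (ii) records that the inverse matrix has entries in $\fR^\mu_{\ge -a(\rho)}$, i.e.\ that $M^{-1}$ shifts the degree filtration downward by at most $a(\rho)$. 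The statement for $\C$ would then come for free: $M$ is in particular $R^\mu$-linear and $\C^{\mu,\rho}_\pm=R^\mu\cdot\fC^{\mu,\rho}_\pm$ by \lemref{l:things are simple}(iii), so an $\fR^\mu$-linear isomorphism $M$ (and its $\fR^\mu$-linear inverse) automatically restricts to an isomorphism of these submodules. Hence it suffices to treat the $\fC$-case.

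Second, I would factor the problem over the places of $F$. Using the nonstandard infinite tensor product of Subsect.~\ref{ss:infinite tensor}, together with the Iwasawa decomposition $G(\bA)=K\cdot N(\bA)\cdot T(\bA)$ that underlies \lemref{l:things are simple}, I expect $\fR^\mu$ to decompose as a completed restricted tensor product $\widehat{\bigotimes}_v\fR_v^{\mu_v}$, the finite-dimensional spaces to factor as $V^{\mu,\rho}_\pm=\bigotimes_v V^{\mu_v,\rho_v}_{\pm,v}$, and the global operator to decompose as $M=\bigotimes_v M_v$, where $M_v$ is the local intertwiner (the Radon transform), $\rho=\bigotimes_v\rho_v$, and $\rho_v$ is trivial for all but finitely many $v$.

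Third, I would prove the local invertibility statement \propref{p:local invertibility}: each $M_v$ is invertible on the local eigenspace, with a filtration shift $a(\rho_v)$ that vanishes whenever $\rho_v$ is trivial at a non-Archimedean place. For a non-Archimedean $v$ with $\rho_v$ trivial this is exactly the computation of \secref{s:K-invariants}, where $M_v$ is convolution by $\alpha_v$ and its inverse is $\beta_v$ from \propref{p:beta_v}(iii), supported on $\Div_+(F_v)$ so that no negative shift occurs; the Archimedean and nontrivial-$\rho_v$ cases are a finite local computation, with the Archimedean inverses $\beta_v$ of \propref{p:beta_v}(iv)--(v) contributing no filtration shift. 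Reassembling via the infinite tensor product, the local inverses $M_v^{-1}$ (almost all of them the local unit) multiply to give $M^{-1}$, and the total shift is $a(\rho)=\sum_v a(\rho_v)$, a finite sum depending only on the non-Archimedean local components of $\rho$; this yields (ii), and taking all $\rho_v$ trivial gives $a(\rho)=0$.

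The hard part, I expect, is justifying the infinite tensor product itself: one is tensoring spaces of generalized functions, and the naive algebraic tensor product does not see the completion needed to house $M^{-1}$, so the formalism of Subsect.~\ref{ss:infinite tensor} must be arranged so that the product $\bigotimes_v M_v^{-1}$ converges and is compatible with the degree filtrations. The appearance of genuine distributions rather than smooth measures in the Archimedean local inverses is precisely what forces the passage from $\C^{\mu,\rho}_\pm$ to $\fC^{\mu,\rho}_\pm$, and is the structural reason that part (i) is cleanest for generalized functions.
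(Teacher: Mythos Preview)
Your proposal is correct and follows essentially the same route as the paper: reduce to the $\fC$-case via \lemref{l:things are simple}(iii), factor over places via the infinite tensor product of Subsect.~\ref{ss:infinite tensor} (Lemmas~\ref{l:inftens}--\ref{l:2inftens}), and invoke the local invertibility \propref{p:local invertibility}, which the paper defers to \cite{W} rather than proving in full. One small correction: the local operators $M_v^{-1}$ are \emph{not} ``almost all the local unit''; what is true is that $M_v(\delta_{+,v})=\alpha_v\cdot\delta_{-,v}$ with $\alpha_v\in\fR^\mu$ converging to $1$ in the filtration topology, and it is this convergence (not triviality of almost all factors) that makes the infinite product $\underset{v}{\otimes}\widetilde M_v^{-1}$ well-defined and yields the additive bound $a(\rho)=\sum_v a(\rho_v)$.
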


We will deduce the proposition from the corresponding local statements, see Subsections~\ref{ss:local statements}-\ref{ss:itog} below.

\begin{rem}  \label{r:continuity_automatic}
$\fC^{\mu,\rho}_\pm$, $\C^{\mu,\rho}_\pm$   are topological vector spaces and $\fR^\mu$, $R^\mu$ are topological algebras. 
 The isomorphisms from \lemref{l:things are simple} are topological. So \propref{p:2invertibility}(i) implies that $M:\fC^{\mu,\rho}_+\to\fC^{\mu,\rho}_-$ and $M:\C^{\mu,\rho}_+\to\C^{\mu ,\rho}_-$ are \emph{topological} isomorphisms.
\end{rem}

\subsection{Local statements}  \label{ss:local statements}
Let $v$ be a place of $F$. Let $\mu_v$ and $\rho_v$ denote the $v$-components of $\mu$ and $\rho$.

For $x\in F_v^\times$ we set $\deg x:=-\log_A |x|$, where $A$ is the number that we fixed in 
Subsection~\ref{sss:the number a}. Let 
$$\deg :K_v\backslash G(F_v)/N(F_v)\to\bR$$ denote the map that takes $\diag  (x,x^{-1})$ to 
$\deg x$.

\subsubsection{The spaces $\C_{\pm,v}^{\mu,\rho}$ and $\fC_{\pm ,v}^{\mu,\rho}\,$}
For any $Q\in\bR$, let $\fC_{\ge Q,v}$ denote the space of $K_v$-finite generalized functions
$f$ on $G(F_v)/N(F_v)$ such that
\begin{equation*}  
\Supp (f)\subset \{ x\in G(F_v)/N(F_v)\,|\,\deg x\ge Q\}.
\end{equation*}
Let $\C_{\ge Q, v}\subset\fC_{\ge Q, v}$ denote the subspace of smooth functions; if $v$ is non-Archimedean then $\C_{\ge Q,v}=\fC_{\ge Q,v}\,$.
Let $\fC_{+, v}$ denote the union of  $\fC_{\ge Q, v}$ for all $Q\in\bR$. Similarly, we have the spaces $\fC_{\le Q, v}\,$, $\fC_{-, v}\,$, $\C_{\le Q, v}\,$,
$\C_{\pm, v}\,$.

Define the action of $F_v^\times$ on $\C_{\pm, v}$ similarly to Subsect.~\ref{sss:change}.
Let $\C_{+,v}^\mu\subset\C_{+,v}$ denote the $\mu_v$-eigenspace for the $O_v^\times$-action and let $\C_{+,v}^{\mu ,\rho}\subset\C_{+,v}^\mu$ denote the maximal subspace on which $K_v$ acts according to $\rho_v\,$.
Similarly, we have  $\C_{-,v}^{\mu ,\rho}$ and $\fC_{\pm ,v}^{\mu ,\rho}\,$.

\subsubsection{The algebra $\fR^\mu_v$ and its action on $\fC^{\mu,\rho}_{\pm ,v}$}   \label{sss:local ring R}
Let $\fR_{\ge Q,v}$ denote the space of distributions $\eta$ on $F_v^\times$ such that $\deg x\ge Q$ for all $x\in\Supp\eta$.
Let $\fR_v$ denote the union of $\fR_{\ge Q,v}$ for all $Q\in\bR$. Then $\fR_v$ is a filtered algebra with respect to convolution.

Let $e_v^\mu\in\fR_v$ be the product of $\mu_v^{-1}$ and the normalized Haar measure on the compact subgroup 
$O_v^\times\subset F_v^\times$. Clearly $e_v^\mu$ is an idempotent.

Set $\fR_v^\mu:=e_v^\mu\cdot\fR_v\,$; this is a unital algebra. 
Let $R_v^\mu\subset\fR_v^\mu$ denote the ideal formed by smooth measures. If $v$ is non-Archimedean  then $R_v^\mu =\fR_v^\mu\,$. 
One has
\[
 \fR^\mu_{\ge Q,v}\cdot\fC^\mu_{\ge Q',v}\subset\fC^\mu_{\ge Q'+Q,v}\, , \quad \fR^\mu_{\ge Q,v}\cdot\fC^\mu_{\le Q',v}\subset\fC^\mu_{\le Q'-Q,v}\, . 
\]

Let $V_{\pm,v}^{\mu,\rho}\subset\fC_\pm^{\mu,\rho}$ be the subspace of those generalized functions from $\fC_{\pm,v}^{\mu,\rho}$ whose support is contained in $\im (K_v\to G(F_v)/N(F_v))$. Note that $\dim V_{\pm,v}^{\mu,\rho}<\infty$.

\begin{lem}   \label{l:2things are simple}
(i) The map $\fR_v^\mu\otimes V_{\pm ,v}^{\mu,\rho}\to\fC_{\pm ,v}^{\mu,\rho}$ is an isomorphism.

(ii) The map $R_v^\mu\otimes V_{\pm ,v}^{\mu,\rho}\to\C_{\pm ,v}^{\mu,\rho}$ is an isomorphism.

(iii) The map $R_v^\mu\otimes_{\fR_v^\mu} \fC_{\pm ,v}^{\mu,\rho}\to\C_{\pm ,v}^{\mu,\rho}$ is an isomorphism.

(iv) The isomorphisms (i)-(iii) preserve the filtrations. \qed
\end{lem}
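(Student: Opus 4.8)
The plan is to imitate the proof of the global \lemref{l:things are simple}, with the decomposition $G(\bA)=K\cdot N(\bA)\cdot T(\bA)$ replaced by the local Iwasawa decomposition $G(F_v)=K_v\cdot N(F_v)\cdot T(F_v)$. As in the global case, it suffices to treat the two maps in (i) and (ii). Statement (iii) is then a formal consequence: applying $R_v^\mu\otimes_{\fR_v^\mu}(-)$ to the isomorphism of (i) gives
\[
R_v^\mu\otimes_{\fR_v^\mu}\fC_{\pm ,v}^{\mu,\rho}\;\cong\;R_v^\mu\otimes_{\fR_v^\mu}(\fR_v^\mu\otimes V_{\pm ,v}^{\mu,\rho})\;=\;R_v^\mu\otimes V_{\pm ,v}^{\mu,\rho},
\]
which is $\C_{\pm ,v}^{\mu,\rho}$ by (ii), all the maps being compatible because they are induced by the $\fR_v^\mu$-action. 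Moreover the two signs $\pm$ are handled by one and the same argument: the separation of variables below is symmetric in $G(F_v)=K_v T(F_v) N(F_v)$, and the only difference between the $+$ and $-$ cases is whether the degree of the support is bounded below or above on the $T(F_v)$-factor, which is exactly the opposite-action convention of Subsect.~\ref{sss:change}. Statement (iv) will be read off from the construction.

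For the core, I would first use Iwasawa to identify $G(F_v)/N(F_v)$ with $(K_v\times T(F_v))/O_v^\times$, where $O_v^\times=T(O_v)\subset K_v$ acts antidiagonally (on the right of $K_v$ and by multiplication on $T(F_v)=F_v^\times$). Under this identification the left $K_v$-action and the right $\star$-action of $F_v^\times$ become the two obvious commuting actions on the product, and $\deg$ is the pullback of $\deg\colon T(F_v)=F_v^\times\to\bR$, since $K_v$ sits in degree $0$. A $K_v$-finite generalized function of type $\rho_v$ that is a $\mu_v$-eigenvector for $O_v^\times$ therefore separates as a tensor product of its fibre data over the compact fibre $\im(K_v)\subset G(F_v)/N(F_v)$ and a generalized function in the $T(F_v)$-variable. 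The fibre data range over a finite-dimensional space, which is precisely $V_{\pm ,v}^{\mu,\rho}$ (the generalized functions supported over $0\in\Div (F_v)$); its finite-dimensionality is the finiteness of $K_v$-multiplicities. The $T(F_v)$-factor, being a $\mu_v$-equivariant generalized function (resp.\ smooth density) on $F_v^\times$, is by definition an element of $\fR_v^\mu$ (resp.\ $R_v^\mu$), and the module action of $\fR_v^\mu$ is simply translation in this variable. This is exactly the map $\fR_v^\mu\otimes V_{\pm ,v}^{\mu,\rho}\to\fC_{\pm ,v}^{\mu,\rho}$ of (i), and replacing ``generalized'' by ``smooth'' gives (ii). Because $\deg$ lives entirely on the $T(F_v)$-factor, the support filtration is carried by the $\fR_v^\mu$-factor alone; thus the isomorphisms send $\fR^\mu_{\ge Q,v}\otimes V^{\mu,\rho}_{+,v}$ onto $\fC^{\mu,\rho}_{\ge Q,v}$ (and likewise on the minus side), which is (iv).

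The routine part is non-Archimedean: there $K_v$ is profinite, all the function spaces are (generalized) functions on discrete or profinite sets, and both the finiteness of $V_{\pm ,v}^{\mu,\rho}$ and the factorization of functions on the product are elementary. The \emph{main obstacle} is the Archimedean case $F_v=\bR,\bC$, where $K_v$ is a compact Lie group. Here two points need genuine justification: first, that fixing the $K_v$-type $\rho_v$ makes $V_{\pm ,v}^{\mu,\rho}$ finite-dimensional (Peter--Weyl together with Frobenius reciprocity for the compact fibre); and second, that a generalized function on the product $K_v\times F_v^\times$ of the prescribed type really does factor as a completed tensor product of a distribution on $K_v$ and one on $F_v^\times$. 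The second point is the Schwartz kernel theorem together with nuclearity of the spaces involved, applied after passing to the finite-dimensional $\rho_v$-isotypic component in the $K_v$-variable; once one is reduced to a fixed finite-dimensional $K_v$-type the tensor factorization is automatic, so this functional-analytic bookkeeping is the only real work.
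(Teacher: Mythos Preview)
Your proposal is correct and follows the same approach as the paper: the paper gives no separate proof for this local lemma (it ends with \qed), indicating that one simply repeats the argument for \lemref{l:things are simple}, replacing the global Iwasawa decomposition $G(\bA)=K\cdot N(\bA)\cdot T(\bA)$ by the local one $G(F_v)=K_v\cdot N(F_v)\cdot T(F_v)$. You spell out more detail than the paper does (the separation of variables, the formal deduction of (iii) from (i)--(ii), and the functional-analytic points in the Archimedean case), but the underlying idea is identical.
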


\subsubsection{The operator $M_v\,$}
Fix a Haar measure on $F_v=N(F_v)$. 
Then one defines an $\fR_v^\mu$-module homomorphism $M_v:\fC_{+ ,v}^{\mu,\rho}\to\fC_{- ,v}^{\mu,\rho}$ by
\[
(M_v\varphi )(x)=\int\limits_{N(F_v)}\varphi (xnw)dn, \quad\quad  \varphi\in\fC_{+ ,v}^{\mu,\rho}\, ,\,  x\in G(F_v)/N(F_v ),
\]
where $w:=\left(\begin{matrix} 0&1\\-1&0\end{matrix}\right)\in SL(2)$. One has $M_v(\fC^{\mu,\rho}_{\ge Q, v })\subset\fC^{\mu,\rho}_{\le -Q,v}\,$, 
$M_v(\C_{+ ,v}^{\mu,\rho})\subset\C_{- ,v}^{\mu,\rho}\,$. 

\begin{rem}
One has $G(F_v)/N(F_v )=F_v^2 -\{0\}$, and the operator $M_v$ is essentially the 2-dimensional Radon transform. However, the functional spaces 
$\fC_{\pm ,v}$ and $\C_{\pm ,v}$ are not standard for the theory of Radon transform. 
\end{rem}

\begin{prop}   \label{p:local invertibility}
(i) The operators $M_v:\fC_{+ ,v}^{\mu,\rho}\to\fC_{- ,v}^{\mu,\rho}$ and $M_v:\C_{+ ,v}^{\mu,\rho}\to\C_{- ,v}^{\mu,\rho}$ are invertible.

(ii) There exists a number $a=a(\rho_v)$ such that 
$$M_v^{-1} (\fC^{\mu,\rho}_{\le -Q,v})\subset\fC^{\mu,\rho}_{\ge Q-a,v}\, ,\quad M_v^{-1} (\C^{\mu,\rho}_{\le -Q,v})\subset\C^{\mu,\rho}_{\ge Q-a,v}\, .$$
If $v$ is Archimedean one can take $a=0$.
\end{prop}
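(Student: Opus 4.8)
The plan is to work in the concrete model $G(F_v)/N(F_v)=F_v^2\setminus\{0\}$ recorded in the Remark preceding the proposition: a coset $gN(F_v)$ corresponds to the first column $v:=ge_1$, and $\deg$ becomes $-\log_A\|\cdot\|$ for the norm of \lemref{l:properties of f_v}. A direct computation from the defining integral gives $(M_v\varphi)(v)=\int_{F_v}\varphi(-sv-u)\,ds$, where $u$ is any vector with $\det(v\,|\,u)=1$; thus $M_v$ integrates $\varphi$ over the affine line $\{w:\det(v\,|\,w)=-1\}$, i.e. it is the Radon-type transform over lines avoiding the origin. First I would observe that the finite-dimensional spaces coincide, $V_{+,v}^{\mu,\rho}=V_{-,v}^{\mu,\rho}=:V_v^{\mu,\rho}$, since both consist of generalized functions supported on $\im(K_v\to G(F_v)/N(F_v))$, which is exactly the degree-$0$ locus. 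By \lemref{l:2things are simple} the spaces $\fC^{\mu,\rho}_{\pm,v}$ and $\C^{\mu,\rho}_{\pm,v}$ are free of rank $r:=\dim V_v^{\mu,\rho}$ over $\fR_v^\mu$ and $R_v^\mu$, and $M_v$ is $\fR_v^\mu$-linear; so (i) becomes the assertion that $M_v$, viewed as an $r\times r$ matrix over $\fR_v^\mu$, is invertible, while (iii) of \lemref{l:2things are simple} reduces the $\C$-statement to the $\fC$-statement.

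The computational heart is to diagonalise the $F_v^\times$-action. Identifying $\fR_v^\mu$ with a ring of Mellin multipliers — integrating against the characters $x\mapsto|x|^s$ of $F_v^\times/O_v^\times$ on the $\mu_v$-line — turns $M_v$ into an explicit matrix-valued function $M_v(s)$, whose entries are the $\rho_v$-matrix-coefficient analogues of the line integral above. In the spherical case ($\rho_v$ trivial) this is precisely the setup of \secref{s:K-invariants}: $M_v$ is convolution by the measure $\alpha_v$ of \lemref{l:alpha_v}, its Mellin transform is $\zeta_{F_v}(s-1)/\zeta_{F_v}(s)$ by \propref{p:Mellin}, and \propref{p:beta_v} already exhibits the inverse multiplier $\beta_v$ together with its support in $\Div_+(F_v)$. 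For a general $K_v$-type one computes $M_v(s)$ as a product of a nowhere-vanishing holomorphic factor and a ratio of local $\Gamma$-/$\zeta$-factors attached to $\rho_v$, so that $\det M_v(s)$ is a nonzero meromorphic function. At non-Archimedean $v$ this settles (i) cleanly: here $\fR_v^\mu$ is a field isomorphic to $\bC((t))$ (formal series in the degree-raising generator, with finite lower bound), so invertibility of the matrix is nonvanishing of $\det M_v(s)$, which holds since it is an explicit product of nonzero factors.

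The real work, and the subtle point, is statement (ii), the support/causality estimate, and in particular the Archimedean claim $a=0$. It amounts to showing that the inverse multiplier $M_v(s)^{-1}$ transforms back to a distribution whose support is bounded below in degree by $-a(\rho_v)$, which is exactly the condition that places it in the algebra $\fR_v^\mu$; at non-Archimedean places the analogous inverse is a $\delta$-series as in \eqref{e:nonArch beta}, possibly shifted by an amount governed by the depth/conductor of $\rho_v$, which produces $a(\rho_v)$ and explains why it depends only on the non-Archimedean components of $\rho$. At Archimedean $v$ one must show there is \emph{no} shift: generalising the explicit formulae \eqref{e:R beta}--\eqref{e:C beta} for $\beta_v$ from the spherical case to an arbitrary $K_v$-type, I would verify that, after the regularisation of \cite{Gelf}, the relevant inverse $\Gamma$-ratio is again supported in the semi-open interval $(0,1]$, i.e. in degrees $\ge 0$, so that composing with the degree-negating flip $x\mapsto x^{-1}$ (as in \corref{c:alpha invertible}) gives $a=0$. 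This last verification — that every Archimedean $K_v$-type yields an inverse multiplier with support precisely in the nonnegative degrees, with no spurious leakage — is the main obstacle; once it is in place, invertibility over the larger Archimedean algebra $\fR_v^\mu$ follows together with (ii), and the smooth-vector statements in (i)--(ii) are automatic from \lemref{l:2things are simple}(iii)--(iv).
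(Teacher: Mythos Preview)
Your strategy is essentially the one the paper adopts. The paper does not give a self-contained proof of this proposition but defers to \cite{W}, remarking only that in the Archimedean case the argument of \cite{W} proceeds by establishing invertibility of certain explicit elements of the filtered algebra $\fR_{\ge 0,v}$ (and that the same argument then works verbatim for $\fC$ in place of $\C$, while at non-Archimedean $v$ one has $\C_{\pm,v}^{\mu,\rho}=\fC_{\pm,v}^{\mu,\rho}$ so nothing further is needed). Your plan --- reduce via \lemref{l:2things are simple} to an $r\times r$ matrix over $\fR_v^\mu$, then show invertibility in $\fR_v^\mu$ (a field $\cong\bC((t))$ at non-Archimedean $v$) and in $\fR_{\ge 0,v}^\mu$ at Archimedean $v$ to get $a=0$ --- is exactly this, and you correctly identify the Archimedean support computation (the $K_v$-type analogue of \eqref{e:R beta}--\eqref{e:C beta}) as the substantive step, which is what \cite{W} carries out. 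The only cosmetic difference is the direction of the reduction between $\C$ and $\fC$: the paper passes from $\C$ (done in \cite{W}) to $\fC$, whereas you go from $\fC$ to $\C$ via \lemref{l:2things are simple}(iii); both are valid.
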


The statements about the operator $M_v:\C_{+ ,v}^{\mu,\rho}\to\C_{- ,v}^{\mu,\rho}$ are proved in \cite{W}. In the Archimedean case they are proved in \cite{W} using invertibility of certain elements of the algebra $\fR_{\ge 0,v}\,$; the same argument works if $\C_{\pm ,v}^{\mu,\rho}$ is replaced by $\fC_{\pm ,v}^{\mu,\rho}\,$. In the non-Archimedean case there is no difference between $\C_{\pm ,v}^{\mu,\rho}$ and $\fC_{\pm ,v}^{\mu,\rho}\,$.

\begin{rem}
The work \cite{W} contains explicit formulas for the operator $M_v^{-1}$. 
\end{rem}

\subsection{$\fC_\pm^{\mu,\rho}$ and $M$ as infinite tensor products}   \label{ss:infinite tensor}
Let $v$ be a non-Archimedean place of $F$ such that $\rho_v$ is the unit representation and $\mu_v$ is trivial. Then
$\fC_{\pm ,v}^{\mu,\rho}$ contains a canonical element, namely the function on $G(F_v)/N(F_v)$ that equals $1$ on the image of $K_v$ and equals $0$ elsewhere. This element of $\fC_{\pm ,v}^{\mu,\rho}$ will be denoted by $\delta_{\pm ,v}\,$.

Given a collection of elements $f_v\in\fC_{\pm ,v}^{\mu,\rho}$ such that $f_v=\delta_{\pm ,v}$ for almost all $v$, one can form the (generalized) function $\underset{v}{\otimes} f_v$  on $G(\bA )/N(\bA )$. Then the pushforward of $\underset{v}{\otimes} f_v$ to $G(\bA )/T(F)N(\bA )$ is an element of $\fC_{\pm}^{\mu,\rho}$. Thus one gets an $\fR^\mu$-linear map
\begin{equation}   \label{e:infinite tens}
\underset{v}{\otimes}\widetilde\fC_{\pm ,v}^{\mu,\rho}\to\fC_{\pm}^{\mu,\rho}\, ,
\end{equation}
where
$\widetilde\fC_{\pm ,v}^{\mu,\rho}:=\fR^\mu\otimes_{\fR_v^\mu}\fC_{\pm ,v}^{\mu,\rho}\,$ and the symbol $\underset{v}{\otimes}$ in \eqref{e:infinite tens} is understood as restricted tensor product over the ring $\fR^\mu$.

\begin{lem}   \label{l:inftens}
The map \eqref{e:infinite tens} is an isomorphism. It preserves the filtrations.
\end{lem}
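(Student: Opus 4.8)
The plan is to deduce the lemma from the structural descriptions in \lemref{l:things are simple} and \lemref{l:2things are simple}, which exhibit every module in sight as a \emph{free} module over the algebra $\fR^\mu$, and then to identify \eqref{e:infinite tens} with the identity of $\fR^\mu$ tensored with a map of finite-dimensional vector spaces. The key preliminary observation is that $\fR$ is the convolution algebra of distributions on the \emph{abelian} group $\bA^\times/F^\times$, hence commutative; so $\fR^\mu=e^\mu\cdot\fR$ is a commutative unital algebra, and tensor products of free $\fR^\mu$-modules behave in the obvious way.

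First I would rewrite the local factors. By \lemref{l:2things are simple}(i) one has $\fC_{\pm,v}^{\mu,\rho}\iso\fR_v^\mu\otimes V_{\pm,v}^{\mu,\rho}$, whence
\[
\widetilde\fC_{\pm,v}^{\mu,\rho}=\fR^\mu\otimes_{\fR_v^\mu}\fC_{\pm,v}^{\mu,\rho}\iso\fR^\mu\otimes V_{\pm,v}^{\mu,\rho},
\]
a free $\fR^\mu$-module. When $\rho_v$ and $\mu_v$ are trivial, $V_{\pm,v}^{\mu,\rho}$ is the line spanned by a canonical vector $v_{0,v}$, and the distinguished element $\delta_{\pm,v}$ corresponds to $1\otimes v_{0,v}$. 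Since $\fR^\mu$ is commutative, for any finite set $S$ of places
\[
\bigotimes_{v\in S,\,\fR^\mu}\bigl(\fR^\mu\otimes V_{\pm,v}^{\mu,\rho}\bigr)\iso\fR^\mu\otimes\Bigl(\bigotimes_{v\in S}V_{\pm,v}^{\mu,\rho}\Bigr);
\]
passing to the colimit over $S$ (inserting $v_{0,v}$ at the remaining places) identifies the source of \eqref{e:infinite tens} with $\fR^\mu\otimes\bigl(\underset{v}{\otimes}V_{\pm,v}^{\mu,\rho}\bigr)$, the restricted tensor product of vector spaces being taken with respect to the lines $\bC\cdot v_{0,v}$. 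On the other side, \lemref{l:things are simple}(i) gives $\fC_\pm^{\mu,\rho}\iso\fR^\mu\otimes V_\pm^{\mu,\rho}$. Under these identifications \eqref{e:infinite tens} becomes $\id_{\fR^\mu}\otimes\Phi$, where $\Phi\colon\underset{v}{\otimes}V_{\pm,v}^{\mu,\rho}\to V_\pm^{\mu,\rho}$ sends $\underset{v}{\otimes}f_v$ to the pushforward to $G(\bA)/T(F)N(\bA)$ of the generalized function $\underset{v}{\otimes}f_v$ on $G(\bA)/N(\bA)$. Thus it remains to prove that $\Phi$ is a filtered isomorphism.

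For this I would work out the supports concretely. The Iwasawa map identifies $\im(K\to G(\bA)/N(\bA))$ with $\Iw^{-1}(0)$, and since $K=\prod_v K_v$ and $N(\bA)=\prod_v'N(F_v)$ this image is the restricted product $\prod_v'\im(K_v\to G(F_v)/N(F_v))$. Passing from $G(\bA)/N(\bA)$ to $G(\bA)/T(F)N(\bA)$ divides only by the stabilizer of $0\in\Div(F)$ under $T(F)=F^\times$, namely the group $F^\times\cap O_\bA^\times$, which is contained in $O_\bA^\times$. Because $\mu$ is by construction a character of $O_\bA^\times/(O_\bA^\times\cap F^\times)$ — that is, $\mu$ is trivial on $F^\times\cap O_\bA^\times$ — the $\mu$-eigencondition is exactly the condition for a generalized function on $\Iw^{-1}(0)$ to descend to the $F^\times$-quotient. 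Hence $V_\pm^{\mu,\rho}$ is precisely the space of generalized functions on $\prod_v'\im(K_v)$ of $\prod_v K_v$-type $\boxtimes_v\rho_v$ and $O_\bA^\times$-type $\prod_v\mu_v$, which factors as the restricted tensor product of the local spaces $V_{\pm,v}^{\mu,\rho}$; this factorization is the assertion that $\Phi$ is bijective.

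Finally, for the filtrations: both sides inherit their filtration from the degree filtration on $\fR^\mu$, with the finite-dimensional factors $V_{\pm,v}^{\mu,\rho}$ and $V_\pm^{\mu,\rho}$ placed in degree $0$ (their supports lie over $0\in\Div(F)$). As $\id_{\fR^\mu}\otimes\Phi$ is $\fR^\mu$-linear and carries degree-$0$ generators to degree-$0$ generators, it preserves the filtrations. The one genuinely geometric point — and the step I expect to require the most care — is the support computation of the previous paragraph: verifying that the global quotient by $T(F)N(\bA)$ differs from the product of the local quotients by $N(F_v)$ only through the finite diagonal group $F^\times\cap O_\bA^\times$, which the $\mu$-eigencondition absorbs. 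Everything else is formal bookkeeping with free modules over the commutative algebra $\fR^\mu$.
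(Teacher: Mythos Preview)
Your proof is correct and follows essentially the same route as the paper, which simply cites Lemmas~\ref{l:things are simple}(i,iv) and~\ref{l:2things are simple}(i,iv). You have unpacked what those citations mean: both sides are free $\fR^\mu$-modules on the finite-dimensional spaces $V_\pm^{\mu,\rho}$ and $\underset{v}{\otimes}V_{\pm,v}^{\mu,\rho}$, the map is $\fR^\mu$-linear and carries generators to generators, and the induced map $\Phi$ on generators is the natural factorization of functions supported on the $K$-orbit into local pieces. One small clarification worth recording: the pushforward from $G(\bA)/N(\bA)$ to $G(\bA)/T(F)N(\bA)$, restricted to the $K$-orbit, is a sum over the finite group $F^\times\cap O_\bA^\times$ (roots of unity in $F$), so $\Phi$ is the natural identification multiplied by $|F^\times\cap O_\bA^\times|$; this does not affect bijectivity but makes precise the sentence ``the $\mu$-eigencondition is exactly the condition for \dots\ to descend.''
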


\begin{proof}
Use Lemmas~\ref{l:things are simple}(i,iv) and \ref{l:2things are simple}(i,iv).
\end{proof}

Now let us explain the relation between the operator $M:\fC_{+}^{\mu,\rho}\to\fC_{-}^{\mu,\rho}$ and the local operators 
$M_v:\fC_{+ ,v}^{\mu,\rho}\to\fC_{- ,v}^{\mu,\rho}\,$.

The operators $M_v$ depend on the choice of Haar measures on the local fields $F_v\,$. Choose them so that $\mes O_v=1$ for almost all $v$ and the product measure on $\bA$ satisfies the condition $\mes (\bA/F)=1$.

As before, set $\widetilde\fC_{\pm ,v}^{\mu,\rho}:=\fR^\mu\otimes_{\fR_v^\mu}\fC_{\pm ,v}^{\mu,\rho}\,$. The $\fR_v^\mu$-linear map
$M_v:\fC_{+ ,v}^{\mu,\rho}\to\fC_{- ,v}^{\mu,\rho}$ induces an $\fR^\mu$-linear map $\widetilde M_v:\widetilde\fC_{+,v}^{\mu,\rho}\to\widetilde\fC_{-,v}^{\mu,\rho}\,$.

For almost all $v$ one has the elements $\delta_{\pm ,v}\in\fC_{\pm ,v}^{\mu,\rho}\,$, and one has  
$\widetilde M_v(\delta_{+,v})=\alpha_v\cdot\delta_{-,v}$ for some $\alpha_v\in\fR^\mu$. It is easy to see that the elements $\alpha_v$ converge to $1$ with respect to the filtration formed by $\fR^\mu_{\le Q}\,$. So one can form the infinite tensor product
\begin{equation}  \label{e:2infinite tens}
\underset{v}{\otimes}\widetilde M_v\, ,
\end{equation}
which is an operator $\underset{v}{\otimes}\widetilde\fC_{+,v}^{\mu,\rho}\to \underset{v}{\otimes}\widetilde\fC_{-,v}^{\mu,\rho}\,$.

\begin{lem}  \label{l:2inftens}
The isomorphism \eqref{e:infinite tens} identifies $M:\fC_{+}^{\mu,\rho}\to\fC_{-}^{\mu,\rho}$ with the operator~\eqref{e:2infinite tens} . \qed
\end{lem}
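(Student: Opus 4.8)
The plan is to reduce the identity to a statement about decomposable tensors and then to observe that the defining integral for $M$ factors over the places of $F$. Both $M\colon\fC_{+}^{\mu,\rho}\to\fC_{-}^{\mu,\rho}$ and the operator \eqref{e:2infinite tens} are $\fR^\mu$-linear — the former because $M$ is $(\bA^\times/F^\times)$-equivariant with the sign conventions of Subsect.~\ref{sss:change}, the latter by construction. By \lemref{l:inftens} the source is, as a filtered $\fR^\mu$-module, the restricted tensor product $\underset{v}{\otimes}\widetilde\fC_{+,v}^{\mu,\rho}$, so it suffices to check that the two operators agree on (the image of) a decomposable element $\underset{v}{\otimes} f_v$, with $f_v\in\fC_{+,v}^{\mu,\rho}$ and $f_v=\delta_{+,v}$ for almost all $v$.

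First I would pass from the quotient $G(\bA)/T(F)N(\bA)$ to $G(\bA)/N(\bA)$. The descent by $T(F)=F^\times$ is already absorbed into the passage to $\fR^\mu$-modules and into the restricted tensor product over $\fR^\mu$ used in \eqref{e:infinite tens}: concretely, the image of $\underset{v}{\otimes} f_v$ in $\fC_{+}^{\mu,\rho}$ is the pushforward to $G(\bA)/T(F)N(\bA)$ of the product function $\prod_v f_v$ on $G(\bA)/N(\bA)$, which is the restricted product of the $G(F_v)/N(F_v)$; and $M$ is induced by the adelic integral $\varphi\mapsto\int_{N(\bA)}\varphi(\,\cdot\,nw)\,dn$ on $G(\bA)/N(\bA)$, which manifestly commutes with the $F^\times$-summation effecting the descent. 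Thus it is enough to factor this adelic integral.

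The key computation is then the factorization itself. Since $N(\bA)=\prod_v N(F_v)$ with the product Haar measure (normalized, as in the paragraph preceding \eqref{e:2infinite tens}, so that $\mes(\bA/F)=1$), and since $w=\left(\begin{smallmatrix} 0&1\\-1&0\end{smallmatrix}\right)$ is $F$-rational and hence acts place-by-place, the integrand $\prod_v f_v$ evaluated at $xnw$ equals $\prod_v f_v(x_v n_v w)$. For almost all $v$ the factor $f_v=\delta_{+,v}$ is supported on the image of $K_v$, and the degree bounds built into the $\fC_{\ge Q,v}$-conditions (together with $M_v(\fC^{\mu,\rho}_{\ge Q,v})\subset\fC^{\mu,\rho}_{\le -Q,v}$) make the support proper, so Fubini applies and the integral factors as $\prod_v\int_{N(F_v)} f_v(x_v n_v w)\,dn_v=\prod_v (M_v f_v)(x_v)$. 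Under the extension of scalars each factor $M_v f_v$ corresponds to $\widetilde M_v(f_v)$, and for almost all $v$ it is $\widetilde M_v(\delta_{+,v})=\alpha_v\cdot\delta_{-,v}$ with $\alpha_v\to 1$ in $\fR^\mu$ — exactly the data assembled into \eqref{e:2infinite tens}. Hence $M$ carries the image of $\underset{v}{\otimes} f_v$ to the image of $\underset{v}{\otimes}(\widetilde M_v f_v)$, as asserted.

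The main obstacle is the interchange of the adelic integral with the infinite (restricted) tensor product: one must verify that Fubini is legitimate for the decomposable integrand — i.e. that $\prod_v f_v(\,\cdot\,nw)$ is genuinely a finite product at each adelic point — and that the factored answer reassembles to a bona fide element of $\underset{v}{\otimes}\widetilde\fC_{-,v}^{\mu,\rho}$ rather than a merely formal product. This is controlled entirely by the filtration (degree) estimates: the support condition $f_v\in\fC_{\ge Q,v}$, the local bound on $M_v$, and the convergence $\alpha_v\to 1$ guarantee both that \eqref{e:2infinite tens} is well-defined and that the product of local integrals lands in $\fC_{-}^{\mu,\rho}$. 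These are precisely the filtration-compatibility statements recorded in \lemref{l:things are simple}(iv), \lemref{l:2things are simple}(iv), and \lemref{l:inftens}, so no new input beyond them is required.
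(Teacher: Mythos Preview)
Your argument is correct and is exactly the standard verification one has in mind; the paper itself gives no proof at all beyond the \qed, treating the factorization of the adelic integral \eqref{e:M} over places as routine once \lemref{l:inftens} is in place. Your reduction to decomposable tensors via $\fR^\mu$-linearity, followed by the place-by-place splitting of $\int_{N(\bA)}\varphi(\cdot\,nw)\,dn$ using $N(\bA)=\prod_v N(F_v)$ and the product Haar measure, is precisely the intended content.
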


\subsection{Proof of \propref{p:2invertibility}} \label{ss:itog}
The statements of \propref{p:2invertibility} about the operator 
$M:\fC_{+}^{\mu,\rho}\to\fC_{-}^{\mu,\rho}$ and its inverse follow from \propref{p:local invertibility} combined with Lemmas~\ref{l:inftens}-\ref{l:2inftens}. 
By \lemref{l:things are simple}(iii), invertibility of $M:\C_{+}^{\mu,\rho}\to\C_{-}^{\mu,\rho}$ follows from the  invertibility of 
$M:\fC_{+}^{\mu,\rho}\to\fC_{-}^{\mu,\rho}$. \qed

\appendix

\section{Relation to works on the geometric Langlands program}   \label{s:miraculous}
In this appendix we relate this article to \cite{DG2,DG3, G1}. Subsections~\ref{sss:CT!}-\ref{sss:why}, \ref{ss:psid}-\ref {ss:miraculous}, and \ref{sss:Eis!}-\ref{sss:Dennis-iso}  are the nontrivial ones. 

In Subsections~\ref{sss:CT!}-\ref{sss:why} we motivate the definition of the subspace $\cA_{ps-c}\subset\cA$ given in Subsection~\ref{ss:Aps-c}. In Subsections~ \ref{ss:psid}-\ref {ss:miraculous} we motivate the definition of the function $b$ from Subsection~\ref{sss:b} and the definition of the form $\B$. In Subsections~ \ref{sss:Eis!}-\ref{sss:Dennis-iso}  we discuss the relation between the operator $\Eis'$ from Subsection~ \ref{ss:second Eis} and the functor $\Eis_!$ from \cite{DG3}.

\subsection{D-modules, $l$-adic sheaves, and functions}  \label{ss:D&ell}
\subsubsection{`Left' and `right' functors}   \label{sss:left&right}
We will consider two different cohomological formalisms:

\smallskip

(i) Constructible $l$-adic sheaves on schemes of finite type over a field;

(ii) D-modules on schemes of finite type over a field of characteristic~0.

\smallskip

In each of them we have two adjoint pairs of functors $(f^*, f_*)$ and $(f_!, f^!)$. We will refer to $f^*$ and $f_!$ as `left' functors and to 
$f_*$ and $f^!$ as `right' functors (each `left' functor is left adjoint to the `right' functor from the same pair). Caveat: in the D-module setting the `left' functors are, in general, only partially defined (because D-modules are not assumed holonomic). Thus in the D-module setting we \emph{have} to consider the `right' functors as the `main' ones. We prefer to do this in the constructible setting as well (then the analogy between the two settings becomes transparent).

Both cohomological formalisms (i) and (ii) exist in the more general setting of \emph{algebraic stacks locally of finite type} over a field\footnote{See \cite{DG1} for the D-module formalism and \cite{Beh1, Beh2, LO,GLu} for the $l$-adic one.}, but the situation with the pushforward functor is subtle (in the D-module setting it is discussed in Subsect.~\ref{ss:reform} below). However, if a morphism $f$ between algebraic stacks is \emph{representable}\footnote{Representability means that the fibers of $f$ are algebraic \emph{spaces}.} and has \emph{finite} type\footnote{In fact, the combination \{representability\}+\{finite type\} can be replaced by a weaker condition of \emph{safety}. The definition of safety is contained in Subsect.~\ref{ss:reform}.} then $f_*$ and $f_!$ are as good as in the case of schemes.
 
\subsubsection{Functions-sheaves dictionary (an unusual convention)}   \label{sss:Functions-sheaves}
Let $\cY$ be an algebraic stack locally of finite type over $\bF_q\,$ and let $M$ be an object of the bounded constructible 
derived category  of $\ql$-sheaves on $\cY$. To such a pair we associate a `trace function' on the groupoid\footnote{A function on a groupoid is the same as a function on the set of its isomorphism classes, but the notion of direct image of a function is slightly different.} $\cY (\bF_q)$. We do it in an unconventional way: namely, \emph{our trace function corresponding to $M$ equals Grothendieck's trace function corresponding to the Verdier dual $\bD M$} (in other words, the value of our trace function at $y\in\cY (\bF_q)$ is the trace of the \emph{arithmetic} Frobenius acting on the \emph{!-stalk} of $M$ at $y$).

Thus the pullback of functions corresponds to the $!$-pullback of $l$-adic complexes, and the pushforward of functions with respect to a morphism $f$ of finite type corresponds\footnote{If $f$ is not representable (or safe) then explaining the precise meaning of the word `corresponds' requires some care
(see  \cite{Beh1, Beh2, Sun,GLu}) because the pushforward of a bounded complex is not necessarily bounded.}
 to the $*$-pushforward of $l$-adic complexes. In other words, the standard operators between spaces of functions correspond to the `right' functors in the sense of Subsect.~\ref{sss:left&right}.

\begin{ex}
According to our convention, the constant function 1 corresponds to the \emph{dualizing complex} of $\cY$, which will be denoted by $\dK_{\cY}\,$.
\end{ex}

\begin{ex}   \label{ex:b}
In Subsect.~\ref{sss:b} we defined a function $b$ on $(\Bun_G\times\Bun_G)(\bF_q)$, where $G:=SL(2)$. According to our new convention, $b$ corresponds to the complex $\Delta_!(\dK_{\Bun_G})$, where $\Delta :\Bun_G\to\Bun_G\times\Bun_G$ is the diagonal. The stack $\Bun_G$ is smooth of pure dimension $d=3g_X-3$, where $g_X$ is the genus of the curve $X$. So the function $q^{-d}\cdot b$ corresponds to 
$\Delta_!((\ql)_{\Bun_G})$.
\end{ex}

\begin{rem}    \label{r:Tate twist}
Let $M$ be an $l$-adic complex on $\cY$ and $f$ the corresponding function on $\cY (\bF_q )$. According to our convention, the function corresponding to $M[2](1)$ equals $qf$.
\end{rem}

\subsubsection{Analogy between D-modules and functions} \label{sss:Dmods&functions}
For certain reasons (including serious ones) the works \cite{DG2,DG3, G1} deal with D-modules but not with constructible sheaves. There is no direct relation between D-modules (which live in characteristic 0) and functions on $\cY (\bF_q)$, where $\cY$ is as in Subsect.~\ref{sss:Functions-sheaves}. However there is an analogy between them. It comes from the analogy between the two cohomological formalisms considered in Subsect.~\ref{sss:left&right} and the functions-sheaves dictionary as formulated in Subsect.~\ref{sss:Functions-sheaves}.

\subsection{Some categories of D-modules on $\Bun_G$ and $\Bun_T\,$} 
\subsubsection{} \label{sss:stack convention}
Let $k$ denote an algebraically closed field  of \emph{characteristic zero} and $X$ a smooth complete connected curve over $k$. 
Just as in the rest of the article, $G:=SL(2)$ and $T\subset G$ is the group of diagonal matrices.\footnote{In \cite{DG2,DG3, G1} instead of 
$SL(2)$ one considers any reductive group, and instead of $T$ one considers the Levi quotient of any parabolic.} Let $\Bun_G$ (resp.~ $\Bun_T$) denote the moduli stack of principal $G$-bundles (resp.~$T$-bundles) on $X$.

We will say `stack' instead of `algebraic stack locally of finite type over $k$ whose $k$-points have affine automorphism groups'. We will mostly deal with the stacks $\Bun_G$ and $\Bun_T\,$, which are not quasi-compact.

\subsubsection{} 
For any stack $\CY$ over $k$ one has the DG category of (complexes of) 
D-modules on $\CY$, denoted by $\Dmod(\CY)$. Let $\Dmod(\CY)_c\subset\Dmod(\CY)$ denote the full subcategory of objects $M\in\Dmod(\CY)$ such that for some quasi-compact open $U\overset{j}\mono\cY$ the morphism $M\to j_*j^*M$ is an isomorphism. Let $\Dmod(\CY)_{ps-c}\subset\Dmod(\CY)$ denote the full subcategory of objects $M\in\Dmod(\CY)$ such that for some quasi-compact open $U\overset{j}\mono\cY$ the object $ j_!j^!M$ is defined and the morphism $ j_!j^!M\to M$ is an isomorphism.

\begin{rem}   \label{r: truncatability}
In D-module theory the functor $j_!$ is only partially defined, in general. An open quasi-compact substack $U\overset{j}\mono\cY$ is said to be \emph{co-truncative} if $j_!$ is defined \emph{everywhere.}\footnote{Typical example: if $V$ is a finite-dimensional vector space then 
$(V-\{ 0\})/\bG_m$ is a co-truncative substack of $V/\bG_m\,$.} A stack $\cY$ is said to be \emph{truncatable} if every quasi-compact open substack of $\cY$ is contained in a co-truncative one. The stacks $\Bun_G$ and $\Bun_T$ are truncatable. For $\Bun_T$ this is obvious (because each connected component of $\Bun_T$ is quasi-compact); for $\Bun_G$ this is proved in \cite{DG2}.
 \end{rem}
 
 \subsubsection{} 
 Note that $\Dmod(\Bun_T)_c=\Dmod(\Bun_T)_{ps-c}$ (because each connected component of $\Bun_T$ is quasi-compact). On the other hand, 
$$\Dmod(\Bun_G)_c\ne\Dmod(\Bun_G)_{ps-c}\, .$$

\begin{rem}
The approach of \cite{DG1,DG2,DG3,G1} is to work only with \emph{cocomplete} DG categories (i.e., those in which arbitrary inductive limits are representable). The DG category $\Dmod (\cY )$ is cocomplete for any stack $\cY$. On the other hand, $\Dmod (\cY )_c$ and 
$\Dmod (\cY )_{ps-c}$ are not cocomplete if $\cY$ equals $\Bun_G$ or $\Bun_T\,$.
\end{rem}

The reader may prefer to skip the next remark.

\begin{rem}   \label{r:skip}
For any cocomplete DG category $\cD$, let $\cD'\subset\cD$ denote the following full subcategory: $M\in\cD'$ if and only if there exists a finite collection $S$ of compact objects of $\cD$ such that $M$ belongs to the cocomplete DG subcategory of $\cD$ generated by $S$. For any truncatable stack $\cY$, one has the following description of $\Dmod (\cY )_{ps-c}$ and $\Dmod (\cY )_c$  in terms of the DG category $\Dmod (\cY )$ and its Lurie dual 
$\Dmod (\cY )^\vee$ (the latter two DG categories are cocomplete):
\begin{equation}  \label{e:Dps-c}
\Dmod (\cY )_{ps-c}=\Dmod (\cY )' ,
\end{equation}
\begin{equation}    \label{e:Dc}
 \Dmod (\cY )_c=(\Dmod (\cY )^\vee)'.
\end{equation}
To prove \eqref{e:Dps-c}, use \cite[Prop.~2.3.7]{DG2} and the following fact: for any quasi-compact\footnote{According to the convention of Subsect.~\ref{sss:stack convention}, stacks are assumed to be \emph{locally} of finite type. So quasi-compactness is the same as having finite type.} stack $\cZ$, the DG category $\Dmod (\cZ )$ is generated by finitely many compact objects.\footnote{Without finiteness, this is \cite[Thm.~8.1.1]{DG1}. To prove the finiteness statement, use a stratification argument combined with \cite[Lemmas~10.3.6 and 10.3.9]{DG1} and \cite[ Cor.~8.3.4]{DG1} to reduce to the case where $\cZ$ is  a smooth affine scheme and the case $\cZ=(\Spec k)/G$, where $G$ is an  algebraic group. The first case is clear. In the second case $\Dmod (\cZ )$ is generated by the $!$-direct image of $k\in\Vect=\Dmod (\Spec k)\,$, which is a compact object of $\Dmod (\cZ )$.} To prove  \eqref{e:Dc}, one can use the description of $\Dmod (\cY )^\vee$ given in  \cite[Cor.~4.3.2]{DG2} or  \cite[Subsect.~1.2]{G1}.  
\end{rem}

\subsection{D-module analogs of $\cA^K$, $\cA_c^K$,  $\C^K$, and $\C_c^K$} \label{ss:easy analogs}
Let $F$ be a function field. Then the space $\cA^K$ (i.e., the subspace of $K$-invariants in $\cA$) identifies with the space of all functions on
$K\backslash G(\bA )/G(F)$, i.e., on the set of isomorphism classes of $G$-bundles on the smooth projective curve over $\bF_q$ corresponding to $F$. So we consider the DG category $\Dmod(\Bun_G)$ to be an analog of the vector space $\cA^K$. This DG category was studied in  \cite{DG2,DG3, G1} in the spirit of `geometric functional analysis' (with complexes of D-modules playing the role of functions and cocomplete DG categories playing the role of abstract topological vector spaces). Let us note that the D-module analog of the whole space~$\cA$ has not been studied in this spirit, and it is not clear how to do it.

Because of the convention of Subsect.~\ref{sss:Functions-sheaves}, we consider
$\Dmod(\Bun_G)_c$ to be an analog of $\cA_c^K$. 

We consider $\Dmod(\Bun_T)$ to be a D-module analog of the space $\C^K$ (the reason is clear from Example~\ref{ex:functional}).
We consider $\Dmod(\Bun_T)_c$ to be an analog of $\C_c^K$. Similarly to the subspaces $\C_{\pm}^K\subset\C$ (see Subsect.~\ref{ss:C}) one defines the full  subcategories $\Dmod(\Bun_T)_{\pm}\subset\Dmod(\Bun_T)$. 

There is also a (non-obvious) analogy between $\Dmod(\Bun_G)_{ps-c}$ and $\cA_{ps-c}^K\,$. It will be explained in Subsect.~\ref{ss:CTAps-c} below. But first we have to recall some material from~\cite{DG1}.

\subsection{Good and bad direct image for D-modules}   \label{ss:reform}
For any morphism $f:\cY'\to\cY$ between quasi-compact algebraic stacks one defines in \cite{DG1} \emph{two} pushforward functors\footnote{See \cite[Sect. 0.5.9]{DG1}, \cite[Sects. 7.4-7.8]{DG1}, and  \cite[Sect. 9]{DG1}. The particular case where $\cY=\Spec k$ and $\cY'$ is the classifying stack of an algebraic group is discussed in  \cite[Sect.~ 7.2]{DG1} and \cite[Example 9.1.6]{DG1}. 
Let us note that instead of $f_*$ one uses in \cite{DG1} the more precise notation $f_{\on{dR},*}\,$, where dR stands for `de Rham'.}: the `usual' functor $f_*:\Dmod (\cY')\to \Dmod (\cY)$ (which is very dangerous, maybe pathological) and the \emph{`renormalized direct image'} $f_\blacktriangle:\Dmod (\cY')\to \Dmod (\cY)$ (which is nice). One also defines a canonical morphism $f_\blacktriangle\to f_*\,$, which is an isomorphism if and only  if $f$ is \emph{safe}. By definition, a quasi-compact morphism $f$ is \emph{safe} if for any geometric point $y\to\cY$ and any geometric point $\xi: y'\to \cY'_y:=\cY'\underset{\cY}\times y$ the neutral connected component of the automorphism group of $\xi$ is unipotent.
For instance, any representable morphism is safe. We will use the functor $f_*$ only for safe morphisms $f$ (in which case 
$f_*=f_\blacktriangle$).

The nice properties of $f_\blacktriangle$ are continuity\ (i.e., commutation with infinite direct sums) and base change with respect to !-pullbacks.
Because of base change, we consider $f_\blacktriangle$ as a `right' functor (in the sense of Subsect.~\ref{sss:left&right}), even though if $f$ is not safe then $f_\blacktriangle$ is not right adjoint to the partially defined functor $f^*$.

Base change allows to define $f_\blacktriangle$ if $f$ is quasi-compact while $\cY$ is not.
Moreover, one defines $f_\blacktriangle (M)$ if $f$ is not necessarily quasi-compact but $M\in\Dmod (\cY')$ is such that $M=j_*j^*M$ for some open substack 
$U\overset{j}\mono\cY'$ quasi-compact over $\cY$: namely, one sets $f_\blacktriangle (M):=(f\circ j)_\blacktriangle (j^*M)$ for any $U$ with the above property.

Finally, if $\cY=\Spec k$ and $M\in\Dmod (\cY')$ then one writes $\Gamma_{\on{ren}} (\cY', M)$ instead of $f_\blacktriangle (M)$. The functor 
$\Gamma_{\on{ren}}$ is called \emph{renormalized de Rham cohomology}.

\subsection{D-module analogs of $\cA_{ps-c}^K$ and $\CT^K$} \label{ss:CTAps-c}
We will consider $\Dmod(\Bun_G)_{ps-c}$ to be an analog of $\cA_{ps-c}^K\,$. The goal of this subsection is to justify this.

Recall that the subspace $\cA_{ps-c}\subset\cA$ is defined in Subsect.~\ref{ss:Aps-c} in terms of the operator $\CT:\cA\to\C\,$. So the first step is to define a D-module analog of the corresponding operator $\CT^K:\cA^K\to\C^K$. We will do this in Subsect.~\ref{sss:CT*} using the diagram 
\begin{equation} \label{e:basic diagram intro}
\xy
(-15,0)*+{\Bun_G}="X";
(15,0)*+{\Bun_T}="Y";
(0,15)*+{\Bun_B}="Z";
{\ar@{->}_{\sfp} "Z";"X"};
{\ar@{->}^{\sfq} "Z";"Y"};
\endxy
\end{equation}
that comes from the diagram of groups $G\hookleftarrow B\twoheadrightarrow T$ (as usual, $B\subset G$ is the subgroup of upper-triangular matrices). 

\begin{rem}   \label{r:the_2_diagrams}
Diagram~\eqref{e:basic diagram intro} is closely related to diagram~\eqref{e:1pull-push}. The relation is as follows. Suppose for a moment that $X$ is a curve over $\bF_q$ (rather than over a field of characteristic 0). Then the quotient of diagram~\eqref{e:1pull-push} by the action  of the maximal compact subgroup $K\subset G(\bA )$ identifies with the diagram
\[
\Bun_G(\bF_q)\leftarrow\Bun_B(\bF_q)\to\Bun_T(\bF_q)
\]
corresponding to \eqref{e:basic diagram intro}.
\end{rem}

\begin{rem} \label{r:safety}
Unipotence of $\Ker (B\epi T)$ easily implies that the morphism $$\sfq:\Bun_B\to\Bun_T$$ is safe in the sense of Subsect.~\ref{ss:reform} 
(although $\sfq$ is not representable).
\end{rem}

\subsubsection{The functor $\CT_*$ as a D-module analog of the operator $\CT^K$}  \label{sss:CT*}
Following \cite{DG3}, consider the functor 
\[
\CT_*:\Dmod (\Bun_G)\to\Dmod (\Bun_T),\quad \CT_*:=\sfq_*\circ\sfp^!\, .
\]
 Note that by \remref{r:safety} and Subsect.~\ref{ss:reform}, the functor $\sfq_*$ equals $\sfq_\blacktriangle\,$, so it is not pathological.

Recall that the operator $\CT:\cA\to\C$ is the pull-push along diagram~\eqref{e:1pull-push} (see Subsect.~\ref{ss:CT}). So \remref{r:the_2_diagrams} allows us to consider the functor $\CT_*$ as a D-module analog\footnote{The operator $\CT^K$ also has another (more refined) D-module analog, namely the functor $\CT^{\enh}=\CT_B^{\enh}$ discussed in Subsection~\ref{ss:enh}.} of the operator $\CT^K$.

\subsubsection{The functor $\CT_!$ and its relation to $\CT_*\,$}  \label{sss:CT!}
In \cite{DG3} one defines another functor 
\[
\CT_!:\Dmod (\Bun_G)\to\Dmod (\Bun_T)
\]
by the formula 
\begin{equation} \label{e:!?}
\CT_!:=\sfq_!\circ\sfp^*\, ,
\end{equation}
which has to be understood in a \emph{subtle} sense. The subtlety is due to the fact that the r.h.s. of \eqref{e:!?} involves `left' functors. Because of that, the r.h.s. of \eqref{e:!?} is, \emph{a priori,} a functor $\Dmod (\Bun_G)\to\on{Pro} (\Dmod (\Bun_T))$, where `Pro' stands for the DG category of pro-objects.
However, the main theorem of \cite{DG3} says that the essential image of this functor is contained in 
$\Dmod (\Bun_T)\subset \on{Pro} (\Dmod (\Bun_T))$. It also says that one has a canonical isomorphism
\begin{equation}  \label{e:!!}
\CT_!\simeq\iota^*\circ\CT_*\, ,
\end{equation}
where $\iota^*:\Dmod (\Bun_T)\iso\Dmod (\Bun_T)$ is the pullback along the inversion map $\iota:\Bun_T\iso\Bun_T$ .

\subsubsection{Why  $\Dmod(\Bun_G)_{ps-c}$ is an analog of $\cA_{ps-c}^K\,$}  \label{sss:why}
By \lemref{l:Acirc}, 
\[
\cA_c^K=\{ f\in\cA^K\,|\, \CT (f)\in\Cm^K\}.
\] 
A similar easy argument shows that
\[
\Dmod (\Bun_G)_c=\{ \cF\in\Dmod (\Bun_G)\,|\, \CT_* (\cF )\in\Dmod (\Bun_T)_-\},
\] 
\begin{equation}   \label{e:ps-c via CT!}
\Dmod (\Bun_G)_{ps-c}=\{ \cF\in\Dmod (\Bun_G)\,|\, \CT_! (\cF )\in\Dmod (\Bun_T)_-\}.
\end{equation}
Now combining \eqref{e:!!} and  \eqref{e:ps-c via CT!}, we see that
\begin{equation}   \label{e:2ps-c via CT!}
\Dmod (\Bun_G)_{ps-c}=\{ \cF\in\Dmod (\Bun_G)\,|\, \CT_* (\cF )\in\Dmod (\Bun_T)_+\}.
\end{equation}
Formula \eqref{e:2ps-c via CT!} makes clear the analogy between $\Dmod (\Bun_G)_{ps-c}$ and the space
\[
\cA_{ps-c}^K:=\{ f\in\cA^K\,|\, \CT (f)\in\Cp^K\}
\]
introduced in Subsection~\ref{ss:Aps-c}.

\subsection{D-module analog of $E$} \label{sss:Vect}
In Sect.~\ref{sss:E} we fixed a field $E$; according to our convention, all functions take values in $E$. Thus $E$ is the space of functions on a point.

So the D-module analog of $E$ is the DG category $\Vect:=\Dmod (\Spec k)$, which is just the DG category of complexes of vector spaces over 
$k$.

\subsection{D-module analog of $\B_{naive}^K\,$} 
Recall that $\B_{naive}$ denotes the usual pairing between $\cA_c$ and $\cA$. Let $\B_{naive}^K$ denote the restriction of $\B_{naive}$ to $K$-invariant functions.

In Subsect.~\ref{sss:Vect} we defined the DG category $\Vect$. The D-module analog of $\B_{naive}^K$ is the functor
\[
\Dmod (\Bun_G)_c\times\Dmod (\Bun_G)\to\Vect, \quad\quad (M_1,M_2)\mapsto\Gamma_{\on{ren}}  (M_1\otimes M_2).
\]
Here $\Gamma_{\on{ren}}$ is the renormalized de Rham cohomology (see Subsect.~\ref{ss:reform})
and $\otimes$ stands for the $!$-tensor product, i.e.,
$M_1\otimes M_2:=\Delta^! (M_1\boxtimes M_2)$, where $\Delta:\Bun_G\times\Bun_G\to\Bun_G$ is the diagonal.

\subsection{D-module analogs of  $\B^K\,$ and $L^K$} \label{ss:psid}
\subsubsection{The pseudo-identity functor}  \label{sss:ps-id}
Let $\cY$ be a stack. Let $\on{pr}_1,\on{pr}_2:\cY\times\cY\to\cY$ denote the projections and $\Delta:\cY\to\cY\times\cY$ the diagonal morphism.
Any $\cF\in\Dmod (\cY\times\cY)$ defines functors
\begin{equation} \label{e:functor-cF}
\Dmod (\cY)_c\to\Dmod (\cY),\quad M\mapsto (\on{pr}_1)_\blacktriangle (\cF\otimes \on{pr}_2^!M),
\end{equation}
\begin{equation}   \label{e:pairing-cF}
\Dmod (\cY)_c\times\Dmod (\cY)_c\to \Vect ,\quad (M_1,M_2)\mapsto\Gamma_{\on{ren}} (\cY\times\cY ,\cF\otimes (M_1\boxtimes M_2)),
\end{equation}
where $ (\on{pr}_1)_\blacktriangle$ is the renormalized direct image and $\Gamma_{\on{ren}}$ is the renormalized de Rham cohomology (see Subsect.~\ref{ss:reform}).

For example, if $\cF=\Delta_*\omega_{\cY}$ then \eqref{e:functor-cF} is the identity functor and the `pairing' \eqref{e:pairing-cF} takes $(M_1,M_2)$ to $\Gamma_{\on{ren}} (\cY ,M_1\otimes M_2)$.

Now let $k_{\cY}$ denote the Verdier dual $\bD\omega_{\cY}$ (a.k.a. the constant sheaf\footnote{If $k=\bC$ then the Riemann-Hilbert correspondence takes $\omega_{\cY}$ to the dualizing complex and $k_{\cY}$ to the constant sheaf.}). Following \cite{DG2,G1}, we define the \emph{pseudo-identity} functor 
\begin{equation}   \label{e:ps-id}
(\psId)_{\cY,!}:\Dmod (\cY)_c\to\Dmod (\cY)
\end{equation}
to be the functor  \eqref{e:functor-cF} corresponding to $\cF=\Delta_!(k_{\cY})$. We will also consider the pairing \eqref{e:pairing-cF} corresponding to $\cF=\Delta_!(k_{\cY})$.

The functor \eqref{e:ps-id} has the following important property, which can be checked
straightforwardly: for any open $U\overset{j}\mono\cY$ 
one has 
\begin{equation} \label{e:property of paid}
(\psId)_{\cY,!}\circ j_*=j_!\circ (\psId)_{U,!}\, .
\end{equation}

\begin{rem}
Suppose that $\cY$ is truncatable (this notion was defined in \remref{r: truncatability}). Then $\Dmod (\cY)_c$ is a full subcategory of $\Dmod (\cY)^\vee$, see formula~\eqref{e:Dc}. In fact, the functor \eqref{e:functor-cF} uniquely extends to a continuous functor
\begin{equation}   \label{e:extended by continuity}
\Dmod (\cY)^\vee\to \Dmod (\cY),
\end{equation}
and the pairing \eqref{e:pairing-cF} to a continuous pairing 
\[\Dmod (\cY)^\vee\times\Dmod (\cY)^\vee\to\Vect;
\]
see \cite[Subsect.~4.4.8]{DG2} or  \cite[Subsect.~3.1.1]{G1}.
\end{rem}

\begin{rem}  \label{r:psid to id}
Suppose that $\cY$ is smooth of pure dimension $d$ and that the morphism $\Delta:\cY\to\cY\times\cY$ is separated.\footnote{Sometimes (e.g., in \cite{LM}) separateness of $\Delta$ is required in the definition of algebraic stack. Anyway, for most stacks that appear in practice  
the morphism $\Delta$ is affine (and therefore separated).} Then one has a canonical morphism
\[
\Delta_!(k_\cY)\to\Delta_*(k_\cY)=\Delta_*(\omega_\cY)[-2d],
\]
which induces a canonical morphism
\begin{equation}   
(\psId)_{\cY,!}(M)\to M[-2d], \quad M\in\Dmod(\cY )_c\, .
\end{equation}
\end{rem}

\subsubsection{D-module analogs of  $\B^K\,$ and $L^K$}   \label{sss:analogs of BK&LK}
On $\cA_c$ we have the bilinear form $\B$; its restriction to $\cA_c^K$ will be denoted by $\B^K$. In Subsect.~\ref{ss:operator A} we defined
the operator $L:\cA_c\to\cA\,$; it induces an operator $L^K:\cA_c^K\to\cA^K$. Recall that
\[
\B (f_1,f_2)=\B_{naive}(f_1,Lf_2)
\]
for any $f_1,f_2\in\cA_c\,$; in particular, this is true for $f_1\, ,f_2\in\cA_c^K$.

According to Theorem~\ref{t:our main}, the `matrix' of the bilinear form $\B^K$ is the function $b$ defined in Subsect.~\ref{sss:b}.
According to Example~\ref{ex:b}, the D-module $\Delta_!(k_{\Bun_G})$ is an analog of the function $q^{-d}\cdot b$, where
\[
d:=\dim\Bun_G=3g_X-3\,.
\]
So we consider the pairing \eqref{e:pairing-cF} corresponding to $\cY=\Bun_G$ and $\cF=\Delta_!(k_{\cY})$ to be the D-module analog of the bilinear form $q^{-d}\cdot\B^K$. Accordingly, we consider the functor 
$(\psId)_{\Bun_G,!}:\Dmod (\Bun_G)_c\to\Dmod (\Bun_G)$ defined in Subsect.~\ref{sss:ps-id} to be the D-module analog of the operator 
$q^{-d}\cdot L^K:\cA_c^K\to\cA^K$.

\subsection{Miraculous duality and a D-module analog of \corref{c:A}}  \label{ss:miraculous}
Formula \eqref{e:property of paid} implies that for any stack $\cY$, the functor $(\psId)_{\cY,!}$ maps $\Dmod (\cY)_c$ to the full subcategory 
$\Dmod(\cY)_{ps-c}\subset\Dmod (\cY)$, so one gets a functor
\begin{equation}   \label{e:2ps-id}
(\psId)_{\cY,!}: \Dmod (\cY)_c\to\Dmod (\cY)_{ps-c}\, .
\end{equation}

Now suppose that $\cY=\Bun_G\,$. Then the main result of \cite{G1} (namely, Theorem 0.1.6) says that the functor \eqref{e:extended by continuity} 
corresponding to $\cF=\Delta_!(k_{\cY})$ is an equivalence.\footnote{In addition to the functor \eqref{e:extended by continuity}, one also has the naive functor $\Dmod (\cY)^\vee\to \Dmod (\cY)$, which extends the natural embedding 
$\Dmod (\cY)_c\mono\Dmod (\cY)$. But this naive functor is \emph{not} an equivalence by \cite[Lemma 4.4.5]{DG2}.}
By \cite[Lemma 4.5.7]{DG2}, this implies that the functor \eqref{e:2ps-id} is an equivalence. This is an analog of the part of \corref{c:A} that says that the operator $L^K:\cA_c\to\cA$ induces an isomorphism
 $\cA_c^K\iso\cA_{ps-c}^K\,$.

 \corref{c:A} also explicitly describes the inverse isomorphism 
 $$\cA_{ps-c}^K\iso\cA_c^K.$$
 This suggests a conjectural description of the functor 
 inverse to \eqref{e:2ps-id}. The conjecture is formulated in Appendix~\ref{s:conjectural}.

 \subsection{The functor $\psId_{\cY,!}$ for $\cY=\Bun_T\,$} \label{ss:A.9}
  The material of this subsection will be used in Subsect.~\ref{sss:Dennis-iso}.

\subsubsection{D-module setting}   \label{sss:Dsetting}
The morphism $\Delta:\Bun_T\to\Bun_T\times\Bun_T$ factors as 
$$\Bun_T\overset{\pi}\longrightarrow\cZ\overset{i}\mono\Bun_T\times\Bun_T\, ,$$ 
where $i$ is a closed embedding and $\pi :\Bun_T\to\cZ$ is a $\bG_m$-torsor. So $\Delta_! (k_{\Bun_T})=\Delta_* (k_{\Bun_T})[-1]$. The stack $\Bun_T$ is smooth and has pure dimension $g_X-1$, where $g_X$ is the genus of $X$. So $k_{\Bun_T}=\omega_{\Bun_T}[2-2g_X]$. Thus 
$$\Delta_! (k_{\Bun_T})=\Delta_* (\omega_{\Bun_T})[1-2g_X].$$ Therefore the functor $\psId_{\Bun_T,!}:\Dmod (\Bun_T )_c\to\Dmod (\Bun_T )_{ps-c}=\Dmod (\Bun_T )_c$  equals $\Id [1-2g_X]$.
 
 \subsubsection{$l$-adic setting}    \label{sss:lsetting}
 In this setting the formulas are similar to those from Subsect.~\ref{sss:Dsetting}, but now we have to take the Tate twists in account:
 \[
 \Delta_! ((\ql )_{\Bun_T})= \Delta_*((\ql )_{\Bun_T})[-1]= \Delta_*(\dK_{\Bun_T})[1-2g_X](1-g_X),
 \]
\begin{equation}   \label{e:l-psid}
\psId_{\Bun_T,!}=\Id [1-2g_X](1-g_X).
\end{equation}

 \subsubsection{Analog at the level of functions}   \label{sss:analog of psid-BunT}
 We consider the vector space $\C_c^K$ to be an analog of $\Dmod (\Bun_T )_c\,$. We consider the operator 
 \begin{equation}    \label{e:minus}
 -q^{1-g_X}\cdot\Id\in\End (\C_c^K) 
 \end{equation}
 to be an analog of the functor $\psId_{\Bun_T,!}:\Dmod (\Bun_T )_c\to\Dmod (\Bun_T )_c\,$. This is justified by formula~\eqref{e:l-psid}; in particular, the minus sign in \eqref{e:minus} is due to the fact that the number $1-2g_X$ from  \eqref{e:l-psid} is odd. In Subsect.~\ref{sss:Dennis-iso} we will see that this minus sign is closely related to the minus sign in \propref{p:properties of A}(ii).

\subsection{Eisenstein functors }  \label{ss:Eisfunctors}
The operators $$\Eis :\C_+\to\cA, \quad \Eis' :\C_-\to\cA$$ induce operators $\Eis^K :\C_+^K\to\cA^K$ and 
$(\Eis')^K :\C_-^K\to\cA^K$. In Subsections~\ref{sss:Eis*}-\ref{sss:Dennis-notation} we will discuss the functor $\Eis_*\,$, which is a D-module analog of the operator $\Eis^K$. In Subsections~\ref{sss:Eis!}-\ref{sss:Dennis-iso}  we will discuss the functor 
$\Eis_!\,$, whose analog at the level of functions  is closely related to $(\Eis')^K$, see 
formula~\eqref{e:analog of Eis!}. In Subsection~\ref{sss:other Eis-functors} we briefly discuss  the compactified Eisenstein functor $\Eis_{!*}$ and the enhanced Eisenstein functor $\Eis^{\enh}\,$.

Both  $\Eis_*\,$ and $\Eis_!\,$ are defined using the diagram of stacks
\[ 
\xy
(-15,0)*+{\Bun_G}="X";
(15,0)*+{\Bun_T}="Y";
(0,15)*+{\Bun_B}="Z";
{\ar@{->}_{\sfp} "Z";"X"};
{\ar@{->}^{\sfq} "Z";"Y"};
\endxy
\]
which was already used in Subsection~\ref{ss:CTAps-c}. We will need the following remarks.

\begin{rem} \label{r:representability}
The morphism $\sfp:\Bun_B\to\Bun_G$ is representable, i.e., its fibers are algebraic spaces (in fact, schemes).
\end{rem}

\begin{rem}   \label{r:qcompactness}
The morphism $\sfp:\Bun_B\to\Bun_G$ is not quasi-compact. But the restriction of $\sfp$ to the substack $\sfq^{-1}(\Bun_T^{\ge a} )$ is quasi-compact for any $a\in\bZ$. Here 
$$\Bun_T^{\ge a}\subset\Bun_T=\Bun_{\bG_m}$$
is the stack of $\bG_m$-bundles of degree $\ge a$.
\end{rem}

\subsubsection{The functor $\Eis_*\,$ as a D-module analog of the operator $\Eis^K$}   \label{sss:Eis*}
Let $\Dmod(\Bun_T)_+$ denote the full subcategory formed by those $M\in\Dmod(\Bun_T)$ whose support is contained in $\Bun_T^{\ge a}$ for some $a\in\bZ$. Define a functor 
\begin{equation}   \label{e:Eis*}
\Eis_*:\Dmod (\Bun_T)_+\to\Dmod (\Bun_G)
\end{equation}
by $\Eis_*:=\sfp_*\circ\sfq^!$. Since we consider $\Dmod (\Bun_T)_+$ rather than $\Dmod (\Bun_T)\,$, Remarks~\ref{r:representability}-\ref{r:qcompactness} ensure that taking $\sfp_*$ does not lead to pathologies.
It is easy to check that
\begin{equation}   \label{e:c-inclusion}
\Eis_*(\Dmod (\Bun_T)_c)\subset\Dmod (\Bun_G)_c\,.
\end{equation}

The functor $\Eis_*$ is a D-module analog of the operator $\Eis^K :\C_+^K\to\cA^K$, and formula~\eqref{e:c-inclusion} is similar to the inclusion
$\Eis^K (\C_c^K)\subset\cA_c^K$. This is clear from \remref{r:the_2_diagrams}. 

The reader may prefer to skip the next subsection and go directly to Subsect.~\ref{sss:Eis!}.

\subsubsection{Relation to the notation of \cite{G1}}   \label{sss:Dennis-notation} 

The functor \eqref{e:Eis*} is the restriction of the functor 
\begin{equation}   \label{e:2Eis*}
\Eis_*:\Dmod (\Bun_T)\to\Dmod (\Bun_G)
\end{equation}
defined in  \cite[Subsect.~1.1.9]{G1}.

On the other hand, the DG category $\Dmod (\Bun_G)_c$ is a full subcategory of the Lurie dual $\Dmod (\Bun_G)^\vee$, see \remref{r:skip} and especially formula~\eqref{e:Dc}.  The DG category $\Dmod (\Bun_G)^\vee$ has a realization introduced in \cite[Subsect.~4.3.3]{DG2}  (or  \cite[Subsect.~1.2.2]{G1} ) and denoted there by $\Dmod (\Bun_G)_{\on{co}}\,$; for us, $\Dmod (\Bun_G)_{\on{co}}$ is a synonym of $\Dmod (\Bun_G)^\vee$. The functor 
$\Eis_*:\Dmod (\Bun_T)_c\to\Dmod (\Bun_G)_c$ is the restriction of the functor 
\begin{equation}   \label{e:3Eis*}
\Dmod (\Bun_T)\to\Dmod (\Bun_G)^\vee=\Dmod (\Bun_G)_{\on{co}}
\end{equation}
introduced in  \cite{G1} and denoted there by $(\CT_* )^\vee$ or $\Eis_{\on{co},*}$ (the notation $(\CT_* )^\vee$ is  introduced in  \cite[Subsect.~0.1.7]{G1} and the synonym $\Eis_{\on{co},*}$ in \cite[Subsect.~1.4.1]{G1}).

Let us note that the relation between the functors \eqref{e:2Eis*} and  \eqref{e:3Eis*} is described in \cite[Prop.~2.1.7]{G1}.

\subsubsection{The functor $\Eis_!\,$}   \label{sss:Eis!}
Define a functor
$$\Eis_!:\Dmod (\Bun_T)\to\Dmod (\Bun_G)$$ 
by $\Eis_!:=\sfp_!\circ\sfq^*$.  According to \cite[Cor.~2.3]{DG2}, the functor $\Eis_!$ is defined everywhere.\footnote{This is not obvious because the functor $\sfp_!$ is only \emph{partially} defined. However, it is proved in  \cite{DG2} that $\sfp_!$ is defined on the essential image of $\sfq^*$. (The functor $\sfq^*$ is defined everywhere because $\sfq$ is smooth.)} 

Note that by \remref{r:qcompactness}, the restriction of $\Eis_!$ to $\Dmod (\Bun_T)_+$ preserves holonomicity. 
It is easy to check that
\begin{equation}   \label{e:2c-inclusion}
\Eis_!(\Dmod (\Bun_T)_c)\subset\Dmod (\Bun_G)_{ps-c}\,.
\end{equation}

\subsubsection{The analog of $\Eis_!$ at the level of functions}   \label{sss:analog of Eis!}
First, let us define a certain automorphism of the space $\C^K$. As explained in Example~\ref{ex:functional},  $\C^K$ identifies with the space of functions on $\Pic X$, where $X$ is the smooth projective curve over $\bF_q$ corresponding to the global field $F$.  So the inversion map $\iota :\Pic X\to\Pic X$ induces an operator $\iota^*:\C^K\to\C^K$, which interchanges the subspaces $\C_+^K$ and $\C_-^K\,$.

Now we claim that \emph{the functor $\Eis_!:\Dmod (\Bun_T)_+\to\Dmod (\Bun_G)$ is a D-module analog of the operator}
\begin{equation}  \label{e:analog of Eis!}
q^{2-2g_X}\cdot (\Eis')^K\circ \iota^* :\C_+^K\to\cA^K\,,
\end{equation}
where $\Eis':\Cm\to\cA$ is the operator defined in Subsection~\ref{ss:second Eis}.
This claim is justified by Theorem~\ref{t: Eis_! and Eis'} of Appendix~\ref{s: Eis_! and Eis'}. In Subsect.~\ref{sss:Dennis-iso} below we show that this claim agrees with Theorem 4.1.2 of \cite{G1}; this gives another justification.

Note that by \propref{p:Eis'(C_c)}, the operator 	\eqref{e:analog of Eis!} maps $\C_c^K$ to $\cA_{ps-c}^K\,$. This is similar to the inclusion \eqref{e:2c-inclusion}.

\subsubsection{Comparison with  \cite[Theorem 4.1.2]{G1}}    \label{sss:Dennis-iso}
Theorem 4.1.2 of \cite{G1} tells us\footnote{To see this, use Subsection~\ref{sss:Dennis-notation} and the fact that the composition 
$\Eis_*\circ\, \iota^*$ (which appears in formula \eqref{e:Dennis-rhs}) equals the functor $ \Eis^-_{\on{co},*}$ (which appears in \cite[Theorem 4.1.2]{G1}). Note that if $G$ is an \emph{arbitrary} reductive group rather than $SL(2)$ then $\iota^*$ has to be replaced here by $w_0^*$, where $w_0$ is the longest element of the Weyl group.} that the functor
\[
\Eis_!\circ \psId_{\Bun_T,!}:\Dmod (\Bun_T)_c\to\Dmod (\Bun_G)_{ps-c}
\]
is isomorphic to the functor
\begin{equation}   \label{e:Dennis-rhs}
\psId_{\Bun_G,!}\circ \Eis_*\circ\, \iota^* :\Dmod (\Bun_T)_c\to\Dmod (\Bun_G)_{ps-c}\,,
\end{equation}
where $\iota:\Bun_T\iso\Bun_T$ is the inversion map. By Subsections~\ref{sss:analogs of BK&LK} and \ref{sss:Eis*}, the analog of \eqref{e:Dennis-rhs} at the level of functions is the operator 
$$q^{3-3g_X}\cdot L^K\circ\Eis^K\circ\, \iota^*:\C_c^K\to\cA_{ps-c}^K\,.$$
This operator equals $-q^{3-3g_X}\cdot (\Eis')^K\circ \iota^*$ by \propref{p:properties of A}(ii). By Subsect.~\ref{sss:analog of psid-BunT}, the analog of the functor 
$\psId_{\Bun_T,!}$ at the level of functions is the operator of multiplication by $-q^{1-g_X}$. So we see that 
the claim made in Subsect.~\ref{sss:analog of Eis!} agrees with  Theorem 4.1.2 of \cite{G1}.

\subsubsection{Other Eisenstein functors}   \label{sss:other Eis-functors}
The functor $\Eis_*$ has an `enhanced' version $\Eis^{\enh}=\Eis_B^{\enh}$, see Subsection~\ref{ss:enh}. Both $\Eis_*$ and $\Eis^{\enh}$ are D-module analogs of the operator~$\Eis^K$.

One also has the compactified Eisenstein functor $\Eis_{!*}\,$, see Subsect.~\ref{ss:why barbaric}(i). In Appendix~\ref{s: Eis_! and Eis'} we work with slightly different functors $\overline{\Eis}$ and $\underline{\Eis}$ (see Subsect.~\ref{ss:def of compactified}), which are good enough for $G=SL(2)$. Formulas~\eqref{e:B4}-\eqref{e:B5} from \corref{c:relation between Eis's} describe the analogs of $\overline{\Eis}$ and $\underline{\Eis}$ at the level of functions. In terms of Eisenstein series (rather than Eisenstein operators) the functor $\Eis_{!*}$ corresponds to the product of the Eisenstein series by a  normalizing factor, which is essentially an $L$-function in the case $G=SL(2)$ and a product of $L$-functions in general (see \cite[Theorem 3.3.2]{Lau} and \cite[Subsect.~2.2]{BG} for more details).

 \section{Relation between the functor $\Eis_!$ and the operator $(\Eis')^K$}  \label{s: Eis_! and Eis'}
 In this section we work over $\bF_q\,$.  Our main goal is to prove Theorem~\ref{t: Eis_! and Eis'}, which justifies the
 claim made in Subsect.~\ref{sss:analog of Eis!}. 
   
 \subsection{Notation and conventions}
\subsubsection{}
 We will say `stack' instead of `algebraic stack locally of finite type over $\bF_q\,$'.

 \subsubsection{}
 Let $X$ be a smooth complete geometrically connected curve over $\bF_q\,$. 
Just as in the rest of the article, $G:=SL(2)$, $T\subset G$ is the group of diagonal matrices, and  $B\subset G$ is the subgroup of upper-triangular matrices.  Let $\Bun_G$ (resp.~ $\Bun_T$) denote the moduli stack of principal $G$-bundles (resp.~$T$-bundles) on $X$.

 \subsubsection{}   \label{sss:2Functions-sheaves}
 We fix a prime $l$ not dividing $q$ and an algebraic closure $\ql$ of $\bQ_l\,$. For any stack $\cY$ one has the  bounded constructible derived category of $\ql$-sheaves, denoted by $\sD (\cY )$. 
 
 To any $\cF\in\sD (\cY )$ we associate a function $f_\cF:\cY (\bF_q)\to\ql$ using the (nonstandard) convention of Subsection~\ref{sss:Functions-sheaves}: namely, the value of $f_\cF$ at $y\in\cY (\bF_q)$ is the trace of the \emph{arithmetic} Frobenius acting on the \emph{!-stalk} of $\cF$ at $y$. So the standard operators between spaces of functions correspond to the \emph{`right'} functors in the sense of Subsect.~\ref{sss:left&right}. 
 
 \subsubsection{}   \label{sss:K-theory}
 If a stack $\cY$ is quasi-compact, let $\K (\cY )$ denote the Grothendieck group of $\sD (\cY )$. In general, let $\K (\cY )$ denote the projective limit of the groups $\K(U)$ corresponding to quasi-compact open substacks $U\subset\cY$. We equip $\K(\cY )$ with the projective limit topology. 
 
 The assignment $\cF\mapsto f_\cF$ from Subsect.~\ref{sss:2Functions-sheaves} clearly yields a group homomorphism from $\K (\cY )$ to the space of functions $\cY (\bF_q)\to\ql$. This homomorphism is still denoted by $\cF\mapsto f_\cF\,$.

  \subsubsection{}
 Just as in Subsect.~\ref{ss:Eisfunctors}, we consider the diagram of stacks
\begin{equation} \label{e2basic diagram}
\xy
(-15,0)*+{\Bun_G}="X";
(15,0)*+{\Bun_T.}="Y";
(0,15)*+{\Bun_B}="Z";
{\ar@{->}_{\sfp} "Z";"X"};
{\ar@{->}^{\sfq} "Z";"Y"};
\endxy
\end{equation}
that comes from the diagram of groups $G\hookleftarrow B\twoheadrightarrow T$. The morphism $\sfp:\Bun_B\to\Bun_G$ is representable. It is not quasi-compact, but the restriction of $\sfp$ to the substack $\sfq^{-1}(\Bun_T^{\ge a} )$ is quasi-compact for any $a\in\bZ$. So we have functors 
\begin{equation}   \label{2e:Eis*}
\Eis_*:\sD (\Bun_T)_+\to\sD (\Bun_G), \quad \Eis_*:=\sfp_*\circ\sfq^!,
\end{equation}
\begin{equation}   \label{2e:Eis!}
\Eis_!:\sD (\Bun_T)_+\to\sD (\Bun_G), \quad \Eis_!:=\sfp_!\circ\sfq^*,
\end{equation}
where $\sD (\Bun_T)_+$ denotes the full subcategory formed by those $\cF\in\sD (\Bun_T)$ whose support is contained in $\Bun_T^{\ge a}$ for some $a\in\bZ$.

\subsubsection{}
Let $\K(\Bun_T)_+$ denote the direct limit of $\K (\Bun_T^{\ge a})$, $a\in\bZ$. The functors \eqref{2e:Eis*}-\eqref{2e:Eis!} induce group homomorphisms
\[
\Eis_*:\K (\Bun_T)_+\to\K (\Bun_G), \quad \Eis_!:\K (\Bun_T)_+\to\K (\Bun_G).
\]

 \subsubsection{}
 Let $F$ denote the field of rational functions on $X$ and $\bA$ its adele ring. Recall that $K\subset G(\bA)$ denotes the standard maximal compact subgroup.
 
 Let $\cA$ and $\C$, $\Cp$, $\Cm$ be the functional spaces defined in Subsections~\ref{sss:automorphic} and \ref{ss:C}; we take $\ql$ as the field in which our functions take values.
 
 As explained in Example~\ref{ex:functional}, we identify $\cC^K$ (i.e., the subspace of $K$-invariants in $\cC$) with the space of all $\ql$-valued functions on $\Bun_T(\bF_q)=\Bun_{\bG_m}(\bF_q)$. We identify $\cA^K$ with the space of all functions $\Bun_G(\bF_q)\to\ql\,$.

 \subsubsection{}   \label{sss:operators}
 The inversion map $\iota :\Bun_T\to\Bun_T$ induces an operator $\iota^*:\C^K\to\C^K$, which interchanges the subspaces 
 $\C_+^K$ and $\C_-^K\,$.
 
 The operators $\Eis:\Cp\to\cA$ and $\Eis':\Cm\to\cA$ defined in Subsections~\ref{ss:Eis} and \ref{ss:second Eis} induce operators $\Eis^K:\Cp^K\to\cA^K$ and $(\Eis')^K:\Cm^K\to\cA^K$.
 Since $\iota^* (\C^K_+)=\C^K_-$ we get an operator
\begin{equation} \label{e:2analog of Eis!}
(\Eis')^K\circ \iota^* :\C_+^K\to\cA^K\,.
\end{equation}

 \subsection{Formulation of the theorem}
 For any $\cF\in\K (\Bun_T)_+$ one has the functions 
 $$f_\cF\in\Cp^K, \quad f_{\Eis_*\cF}\in\cA^K, \quad f_{\Eis_!\cF}\in\cA^K$$ 
 defined as explained in Subsections~\ref{sss:2Functions-sheaves}-\ref{sss:K-theory}. Since formula~\eqref{2e:Eis*} involves only `right' functors, it is clear that
\[
f_{\Eis_*\cF}=\Eis^K(f_\cF), \quad \cF\in\K (\Bun_T)_+ \; .
\]
The next theorem expresses $f_{\Eis_!\cF}$ in terms of $f_\cF$ and the operator \eqref{e:2analog of Eis!}.

 \begin{thm}       \label{t: Eis_! and Eis'}
 For any $\cF\in\K (\Bun_T)_+$ one has
\begin{equation}   \label{e:to prove}
f_{\Eis_!\cF}= q^{2-2g_X}\cdot ((\Eis')^K\circ \iota^*)(f_\cF)\, ,
\end{equation}
where $g_X$ is the genus of $X$.
\end{thm}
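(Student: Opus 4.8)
The plan is to compute the trace function $f_{\Eis_!\cF}$ directly from the geometry of the diagram $\Bun_G\xleftarrow{\sfp}\Bun_B\xrightarrow{\sfq}\Bun_T$ and to match the outcome, term by term, with the explicit description of $(M^K)^{-1}$ obtained in Section~\ref{s:K-invariants}. The analogous (and easy) identity $f_{\Eis_*\cF}=\Eis^K(f_\cF)$ holds because $\Eis_*=\sfp_*\circ\sfq^!$ is built from `right' functors; the whole difficulty in \eqref{e:to prove} is that $\Eis_!=\sfp_!\circ\sfq^*$ is built from `left' functors, so I must understand how each of $\sfq^*$ and $\sfp_!$ acts on trace functions under the convention $f_\cG=t_{\bD\cG}$ (here $t$ is Grothendieck's standard trace). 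Since both sides of \eqref{e:to prove} are additive in $\cF\in\K(\Bun_T)_+$ and the right-hand side depends only on $f_\cF$, I would first reduce to $\cF$ supported on a single component $\Bun_T^{n}$; there $f_\cF\in\Cp^K$ and is handled by the convolution formulas \eqref{e:1beta}--\eqref{e:2needed}.

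First I would dispose of the pullback $\sfq^*$. The morphism $\sfq$ is smooth, and over the degree-$n$ component its relative dimension is $-\chi(\cL^{\otimes2})=g_X-1-2n$. Hence $\sfq^*\cong\sfq^![-2d](-d)$ with $d=g_X-1-2n$, so by \remref{r:Tate twist} the trace function of $\sfq^*\cF$ equals $q^{-d}=q^{\,1-g_X+2n}$ times the naive pullback of $f_\cF$. Thus the component-dependent factor $q^{2\deg\cL}$ appears exactly as in the definition of $w$ in \corref{c:alpha invertible} (recall $\|\cL\|^{-2}=q^{2\deg\cL}$), and a global constant $q^{1-g_X}$ splits off; together with the normalization $\mes(\bA/F)=q^{g_X-1}$ hidden in $\beta$ (see \eqref{e:beta explicit}) this is destined to produce the prefactor $q^{2-2g_X}$.

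The heart of the matter is $\sfp_!$. Because $\sfp$ is representable but \emph{not} proper, $f_{\sfp_!(\sfq^*\cF)}$ is not the naive sum of $\sfq^*\cF$ over the (saturated) $B$-reductions of a $G$-bundle $\cL_G$; it is governed by the $!$-pushforward, i.e.\ by the compactly-supported cohomology of the fibers of $\sfp$. I would organize sub-line-bundles $\cL\hookrightarrow\cL_G$ according to their saturation $\cL'$ and the effective defect divisor $D$ with $\cL=\cL'(-D)$, and compute the fiber cohomology place by place. The non-saturated locus contributes exactly the local factor $\prod_{x\in D_{\red}}(1-q_x)=r(D)$, which is the coefficient of $\delta_D$ in $\U_0*\U_1^{-1}=\underset{v}{\otimes}\beta_v$ (compare \eqref{e:nonArch beta} and \eqref{e:Upsilon}). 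Grouping the resulting weighted sum over all $\cL$ by saturation then identifies $f_{\Eis_!\cF}$ with $q^{1-g_X}\cdot\Eis^K\big((\U_0*\U_1^{-1})*w\big)$, where $w(\cM)=q^{2\deg\cM}f_\cF(\cM)$. Comparing with $(\Eis')^K\circ\iota^*=\Eis^K\circ(M^K)^{-1}\circ\iota^*$ and the formula $(M^K)^{-1}u=\beta*w$ of \corref{c:alpha invertible} — noting that $\iota^*$ precisely cancels the inversion $u(x^{-1})$ built into $w$, leaving $w(\cM)=q^{2\deg\cM}f_\cF(\cM)$ — then yields \eqref{e:to prove}.

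I expect the main obstacle to be the computation of the fiber cohomology of the non-proper $\sfp$ and the bookkeeping needed to see that it reproduces the local distributions $\beta_v$ of \propref{p:beta_v}; it is precisely this non-properness that upgrades $\Eis$ to $\Eis'=\Eis\circ M^{-1}$. Concretely one must (i) justify that $\Eis_!=\sfp_!\circ\sfq^*$ is represented by an honest complex rather than a pro-object on the relevant bounded locus, using quasi-compactness of $\sfp$ over $\sfq^{-1}(\Bun_T^{\ge a})$, and (ii) keep all Tate twists and Verdier-duality conventions consistent (via $\bD\Eis_!\cong\Eis_*\bD$ together with $f_\cG=t_{\bD\cG}$) so that the factors $(1-q_x)$ — rather than the $q_x$ or $q_x-1$ that a naive point count would give — emerge with the correct signs. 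This is exactly the local cohomology input supplied by Schieder's analysis of the boundary (the Picard--Lefschetz oscillators of the introduction), and recognizing its shadow as the $\beta_v$ of Section~\ref{s:K-invariants} is what closes the argument.
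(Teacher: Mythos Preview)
Your endpoint is right: both sides of \eqref{e:to prove} reduce to $\Eis^K\circ\U_0*\U_1^{-1}\circ Q$ applied to $f_\cF$ (with $Q$ the multiplication by $q^{2\deg\cM+1-g_X}$), and you correctly identify the right-hand side via \eqref{e:needed}--\eqref{e:2needed}. Your treatment of $\sfq^*$ is also fine. But your treatment of $\sfp_!$ has a gap, and the paper's route is different.

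The gap: you propose to ``organize sub-line-bundles $\cL\hookrightarrow\cL_G$ according to their saturation $\cL'$ and the effective defect divisor $D$'' and assert that ``the non-saturated locus contributes exactly the local factor $r(D)$''. But the fiber of $\sfp:\Bun_B\to\Bun_G$ contains \emph{only} saturated sub-line-bundles; non-saturated ones live in the compactification $\overline{\Bun}_B$, which you never introduce. Even granting an implicit passage to $\overline{\Bun}_B$, you give no argument for the $r(D)$ contribution --- you defer to ``Schieder's analysis of the boundary''. That is backwards relative to the logic of the paper: Schieder's formula \eqref{e:Schieder} is an input to Theorem~\ref{t:our main}, not to Theorem~\ref{t: Eis_! and Eis'}, whose proof is entirely self-contained and uses no hard local cohomology.

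The paper's route (\S\ref{ss:compactified Eisenstein}--\S\ref{ss:at last the proof}) is an indirect $\K$-theoretic manipulation that avoids computing any fiber cohomology. One introduces $\overline{\Bun}_B$ (with $\overline\sfp$ proper on bounded degree) and the functors $\overline\Eis:=\overline\sfp_*\overline\sfq^{\,!}$, $\underline\Eis:=\overline\sfp_!\overline\sfq^{\,*}$. The stratification of $\overline{\Bun}_B$ by defect divisor gives, in $\K$-theory, $\overline\Eis=\Eis_*\circ(\on{pr}_*\on{act}^!)$ and $\underline\Eis=\Eis_!\circ(\on{pr}_!\on{act}^*)$ (\propref{p:K-theoretic Eisensteins}); this is pure bookkeeping, with only `right' (resp.\ `left') functors on each side. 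The key step you are missing is that, because $G=SL(2)$, the morphism $\overline\sfq$ --- not just $\sfq$ --- is \emph{smooth} (\propref{p:smoothness}), so $\underline\Eis(\cF)=\overline\Eis(\cF[2m](m))$. Combining the three identities yields $\Eis_!=\Eis_*\circ\tilde\U_0\circ\tilde Q\circ\tilde\U_{-1}^{-1}$ in $\K(\Bun_T)_+$, which after passing to functions (\lemref{l:pract}) and using $Q^{-1}\U_1 Q=\U_{-1}$ gives \eqref{e:B6} and hence \eqref{e:to prove}. In short: the paper never computes the compactly-supported cohomology of the open fiber; it trades $\Eis_!$ for $\Eis_*$ by going through the proper map $\overline\sfp$ and exploiting smoothness of $\overline\sfq$.
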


A proof is given in Subsections~\ref{ss:compactified Eisenstein}-\ref{ss:at last the proof} below. To make it self-contained, we used an approach which is somewhat barbaric (as explained in Subsect.~\ref{ss:why barbaric}).

\begin{rem}
Using \cite[Cor.~4.5]{BG2}, one can express $f_{\Eis_!\cF}$ in terms of $f_\cF$ for \emph{any} reductive group $G$ (at least, in the case of principal Eisenstein series).
\end{rem}

\subsection{The compactified Eisenstein functors}   \label{ss:compactified Eisenstein}
We will need the `compactified Eisenstein' functors
$\overline{\Eis}, \underline{\Eis}:\sD (\Bun_T)_+\to\sD (\Bun_G)$,
which go back to \cite{Lau}.

\subsubsection{Definition of $\overline{\Eis}$ and $\underline{\Eis}$} \label{ss:def of compactified}
Consider the diagram
\[
\xy
(-20,0)*+{\Bun_G}="X";
(20,0)*+{\Bun_T}="Y";
(0,20)*+{\overline\Bun_B}="Z";
(-20,20)*+{\Bun_B}="W";
{\ar@{->}_{\overline\sfp} "Z";"X"};
{\ar@{->}^{\overline\sfq} "Z";"Y"};
{\ar@{^{(}->}^j "W";"Z"};  
\endxy
\]
in which $\overline\Bun_B$ denotes\footnote{The definition of $\overline\Bun_B$ is so simple because we assume that $G=SL(2)$. In the case of an arbitrary reductive group see \cite[Subsect.~1.2]{BG} } the stack of rank 2 vector bundles $\cL$ on $X$ with trivialized determinant equipped with an invertible subsheaf $\cM\subset\cL\,$ (the open substack $\Bun_B\subset\overline\Bun_B$ is defined by the condition that $\cM$ is a subbundle). Note that  the morphism $\overline\sfp:\overline\Bun_B\to\Bun_G$ is representable and its restriction  to the substack $\overline\sfq^{-1}(\Bun_T^{\ge a} )\subset\overline\Bun_B$ is proper for any $a\in\bZ$. 

Now define the functor $\overline{\Eis}:\sD (\Bun_T)_+\to\sD (\Bun_G)$ and the group homomorphism 
$\overline{\Eis}:\K (\Bun_T)_+\to\K (\Bun_G)$ by 
$$\overline{\Eis}:=\overline\sfp_*\circ\overline\sfq^!.$$

Similarly, define the functor $\underline{\Eis}:\sD (\Bun_T)_+\to\sD (\Bun_G)$ and the group homomorphism 
$\underline{\Eis}:\K (\Bun_T)_+\to\K (\Bun_G)$ by $$\underline{\Eis}:=\overline\sfp_!\circ\overline\sfq^*.$$

\subsubsection{Relation between $\overline{\Eis}$ and $\underline{\Eis}$}
Recall that the restriction  of $\overline\sfp:\overline\Bun_B\to\Bun_G$ to the substack $\overline\sfq^{-1}(\Bun_T^{\ge a} )\subset\overline\Bun_B$ is proper, so $\overline\sfp_!=\overline\sfp_*\,$.  On the other hand, the following (well known) fact implies that $\overline\sfq^*$ differs from 
$\overline\sfq^!$ only by a cohomological shift and a Tate twist.

\begin{prop}  \label{p:smoothness}
As before, assume that $G=SL(2)$. Then

(i) the morphism $\overline\sfq :\overline\Bun_B\to\Bun_T$ is smooth.

(ii) the fiber of $\overline\sfq$ over $\cM\in\Bun_T$ has pure dimension $-\chi (\cM^{\otimes 2})=g_X-1-2 \deg\cM\,$.
\end{prop}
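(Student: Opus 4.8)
The plan is to prove both statements at once by a relative (over $\Bun_T$) deformation-theoretic analysis of the fibers of $\overline\sfq$, the whole point being that the obstruction to smoothness vanishes even over the boundary $\overline\Bun_B\setminus\Bun_B$, where the subsheaf $\cM$ fails to be a subbundle. Since $\overline\sfq$ is locally of finite presentation, it suffices to verify the infinitesimal lifting property, i.e. the vanishing of the relative obstruction space at every geometric point.

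First I would record the basic observation that giving an invertible subsheaf $\cM\hookrightarrow\cL$ is the same as giving a \emph{nonzero} section $s\in H^0(X,\cM^{-1}\otimes\cL)=\Hom(\cM,\cL)$: any nonzero map from a line bundle into a vector bundle on a smooth curve is automatically injective as a map of sheaves, because its kernel is a rank-zero subsheaf of the torsion-free sheaf $\cM$, hence $0$. Thus a point of $\overline\Bun_B$ lying over $\cM\in\Bun_T$ is a pair $(\cL,s)$, where $\cL$ is a rank-$2$ bundle with trivialized determinant and $s\neq0$ is a section of $\cM^{-1}\otimes\cL$. Next I would identify the relative tangent complex of $\overline\sfq$ at such a point with the two-term complex
\[
K^\bullet=\bigl[\,\mathfrak{sl}(\cL)\xrightarrow{\ \alpha\ }\cM^{-1}\otimes\cL\,\bigr],\qquad \alpha(\xi)=(\mathrm{id}_{\cM^{-1}}\otimes\xi)(s),
\]
placed in degrees $-1$ and $0$; this is the standard deformation complex of a principal $SL(2)$-bundle together with a section of an associated bundle, with $\cM$ (and hence $\det\cL$) held fixed. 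Here $\mathbb H^{-1}(K^\bullet)$ is the space of infinitesimal automorphisms, $\mathbb H^0(K^\bullet)$ the relative tangent space, and $\mathbb H^1(K^\bullet)$ the relative obstruction space, so that smoothness is equivalent to $\mathbb H^1(K^\bullet)=0$. Since everything lives on a curve, the hypercohomology spectral sequence degenerates enough to give
\[
\mathbb H^1(K^\bullet)=\operatorname{coker}\bigl(\alpha_*:H^1(\mathfrak{sl}(\cL))\to H^1(\cM^{-1}\otimes\cL)\bigr).
\]

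The heart of the argument, which I expect to be the main obstacle, is proving that $\alpha_*$ is surjective on $H^1$ even when $s$ has zeros. For this I would first check by a local frame computation (completing the local generator $s$ to a frame respecting the determinant) that $\alpha$ is surjective at every point where $s$ is a subbundle inclusion; as $s\neq0$ this holds at the generic point, so $\cC:=\operatorname{coker}\alpha$ is a torsion sheaf supported on the finite zero locus of $s$. Factoring $\alpha$ through its image as $\mathfrak{sl}(\cL)\twoheadrightarrow\cI\hookrightarrow\cM^{-1}\otimes\cL$, I obtain $H^1(\mathfrak{sl}(\cL))\twoheadrightarrow H^1(\cI)$ because $H^2$ of the kernel vanishes on a curve, and $H^1(\cI)\twoheadrightarrow H^1(\cM^{-1}\otimes\cL)$ because $H^1(\cC)=0$ for the torsion sheaf $\cC$; composing gives the desired surjectivity, whence $\mathbb H^1(K^\bullet)=0$ and statement (i) follows.

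Finally, for (ii), smoothness makes the relative dimension at $(\cL,s)$ equal to $\dim\mathbb H^0(K^\bullet)-\dim\mathbb H^{-1}(K^\bullet)$, and the vanishing of $\mathbb H^1$ lets me compute this difference as the Euler characteristic $\chi(\cM^{-1}\otimes\cL)-\chi(\mathfrak{sl}(\cL))$. Using $\det\cL=\cO$ — so that $\deg(\cM^{-1}\otimes\cL)=-2\deg\cM$ and $\deg\mathfrak{sl}(\cL)=0$ — together with Riemann--Roch, this equals $g_X-1-2\deg\cM=-\chi(\cM^{\otimes 2})$, which is independent of $\cL$. Purity of the fiber dimension is then automatic: the relative dimension is locally constant by smoothness and globally equal to this constant Euler characteristic, so every fiber of $\overline\sfq$ has pure dimension $-\chi(\cM^{\otimes 2})$.
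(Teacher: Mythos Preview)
Your argument is correct. The paper itself does not give a proof of this proposition: it simply says ``We skip the proof because it is quite similar to that of \cite[Cor.~2.10]{Lau1}.'' Your deformation-theoretic argument is precisely the kind of argument one expects (and is in the spirit of Laumon's paper): identify the relative tangent complex of $\overline\sfq$ at $(\cL,\cM,s)$ with $R\Gamma\bigl([\mathfrak{sl}(\cL)\xrightarrow{\alpha}\cM^{-1}\otimes\cL]\bigr)$, observe that $\alpha$ is surjective away from the (finite) zero locus of $s$ so that $\operatorname{coker}\alpha$ is torsion, and conclude $\mathbb H^1=0$ from $H^1(\operatorname{coker}\alpha)=0$ and $H^2(\ker\alpha)=0$ on a curve. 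The Riemann--Roch computation for~(ii) is also correct.

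One small point worth making explicit (though it does not affect correctness): when you pass from ``obstructions vanish at geometric points'' to ``$\overline\sfq$ is smooth,'' you are implicitly using that the flatness condition on the cokernel $\cL/s(\cM)$ in the moduli problem for $\overline\Bun_B$ is automatic for infinitesimal thickenings once $s_0$ is injective. This follows from a short Tor computation over $k[\epsilon]$ (or more generally over any Artinian thickening), and is why your two-term complex really is the full relative tangent complex, with no hidden obstructions coming from the flatness constraint.
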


We skip the proof because it is quite similar to that of \cite[Cor. 2.10]{Lau1}.

\begin{cor}    \label{c:Tate-twist}
One has
$$\underline{\Eis}(\cF)=\overline{\Eis} (\cF [2m](m)),\quad \cF\in\sD (\Bun_T)_+\, ,$$ 
where $m:\Bun_T\to\bZ$ is the locally constant function whose value at $\cM\in\Bun_T$ equals
$2 \deg\cM+1-g_X\,$. 
\end{cor}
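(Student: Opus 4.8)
The plan is to compare $\underline{\Eis}=\overline\sfp_!\circ\overline\sfq^*$ with $\overline{\Eis}=\overline\sfp_*\circ\overline\sfq^!$ term by term, observing that the two functors are assembled from the very same correspondence and differ only by replacing $\overline\sfp_*$ with $\overline\sfp_!$ and $\overline\sfq^!$ with $\overline\sfq^*$. The first replacement will cost nothing, and the second is governed by the smoothness of $\overline\sfq$ established in \propref{p:smoothness}. Thus the whole statement reduces to two standard comparison isomorphisms together with the computation of a relative dimension.

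First I would dispose of the pushforward. Since $\cF\in\sD(\Bun_T)_+$, its support lies in some $\Bun_T^{\ge a}$, so both $\overline\sfq^!\cF$ and $\overline\sfq^*\cF$ are supported on $\overline\sfq^{-1}(\Bun_T^{\ge a})$. By the remark preceding the corollary, the restriction of $\overline\sfp$ to this substack is proper, hence $\overline\sfp_!=\overline\sfp_*$ on objects supported there. This gives $\underline{\Eis}(\cF)=\overline\sfp_*\,\overline\sfq^*\cF$, and it remains to identify $\overline\sfq^*\cF$ with $\overline\sfq^!$ applied to a twist of $\cF$.

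Next I would invoke \propref{p:smoothness}: the morphism $\overline\sfq$ is smooth, and over the component of $\Bun_T$ on which $\deg\cM$ is constant its relative dimension equals $d=g_X-1-2\deg\cM$. For a smooth morphism of relative dimension $d$ one has the canonical isomorphism $\overline\sfq^!\cong\overline\sfq^*[2d](d)$, equivalently $\overline\sfq^*\cong\overline\sfq^![-2d](-d)$. Since $d=-m$, where $m:=2\deg\cM+1-g_X$, this reads $\overline\sfq^*\cong\overline\sfq^![2m](m)$. Because $m$ is locally constant on $\Bun_T$, its pullback $\overline\sfq^*m$ is constant along the fibres of $\overline\sfq$; and since shift and Tate twist are autoequivalences commuting with $\ql$-linear functors, the shift–twist by $\overline\sfq^*m$ may be moved across $\overline\sfq^!$, yielding $\overline\sfq^*\cF\cong\overline\sfq^!\bigl(\cF[2m](m)\bigr)$.

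Combining the two steps gives
\[
\underline{\Eis}(\cF)=\overline\sfp_*\,\overline\sfq^*\cF\cong\overline\sfp_*\,\overline\sfq^!\bigl(\cF[2m](m)\bigr)=\overline{\Eis}\bigl(\cF[2m](m)\bigr),
\]
which is the assertion, and the same identity then holds in $\K(\Bun_G)$ after passing to classes. I do not expect any serious obstacle: the entire geometric content sits in \propref{p:smoothness}, and the only points demanding care are the sign and direction of the shift and twist (so that $d=-m$ produces exactly the stated $m$) and the verification that the locally constant $m$ commutes past $\overline\sfq^!$. Both are routine once the relative dimension has been computed.
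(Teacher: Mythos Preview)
Your proposal is correct and follows essentially the same approach as the paper: the paper states the corollary immediately after \propref{p:smoothness}, having already remarked that $\overline\sfp_!=\overline\sfp_*$ by properness and that smoothness of $\overline\sfq$ makes $\overline\sfq^*$ and $\overline\sfq^!$ differ only by a shift and Tate twist. Your write-up simply fills in these two steps explicitly and checks the numerics $d=-m$, which is exactly what is intended.
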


\subsubsection{Expressing $\overline{\Eis}$ and $\underline{\Eis}$ in terms of  $\Eis_*\,$and $\Eis_!\,$}
The next proposition describes the relation between $\overline{\Eis}$ and $\Eis_*$ and a similar relation between $\underline{\Eis}$ and $\Eis_!$ at the level of Grothendieck groups. To formulate it, we need some notation.

Let $\Sym X$ denote the scheme parametrizing all effective divisors on $X$; in other words, $\Sym X$ is the disjoint union of $\Sym^nX$ for all $n\ge 0$.  Note that $\Sym X$ is a monoid with respect to addition. The morphism
\begin{equation}   \label{e:action map}
\on{act}:\Sym X\times\Bun_T\to\Bun_T\, , \quad (D,\cM)\mapsto\cM (-D)
\end{equation}
defines an action of the monoid $\Sym X$ on $\Bun_T\,$. Let
\[
\on{pr}:\Sym X\times\Bun_T\to\Bun_T
\]
denote the projection.

\begin{prop}  \label{p:K-theoretic Eisensteins}
(i)  The map $\overline{\Eis}:\K (\Bun_T )_+\to \K (\Bun_B )$ equals $\Eis_*\circ\on{pr}_*\circ\on{act}^!$.

(ii) The map $\underline{\Eis}:\K (\Bun_T )_+\to \K (\Bun_B )$ equals $\Eis_!\circ\on{pr}_!\circ\on{act}^*$.
\end{prop}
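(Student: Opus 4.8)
The plan is to realize $\overline\Bun_B$ through the stratification by the defect of the subsheaf, and then to reduce both (i) and (ii) to a single base change in a Cartesian square together with d\'evissage in the Grothendieck group.

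First I would record the geometric input. Sending a point $(D,(\cL,\cM'))$ of $\Sym X\times\Bun_B$ — so $\cM'\subset\cL$ is a \emph{sub}bundle and $D$ is an effective divisor — to the pair $(\cL,\cM'(-D))$, in which $\cM'(-D)\subset\cL$ is now only an invertible \emph{subsheaf}, defines a morphism $\Phi:\Sym X\times\Bun_B\to\overline\Bun_B$. On $\bF_q$-points $\Phi$ is a bijection: the inverse sends $(\cL,\cM\subset\cL)$ to its saturation $\cM'$ together with the divisor $D$ of the torsion sheaf $\cM'/\cM$. More precisely, for each $n\ge 0$ the map $\Phi$ restricts to an isomorphism $\Sym^n X\times\Bun_B\iso Z_n$, where $Z_n\subset\overline\Bun_B$ is the locally closed substack of pairs whose defect has degree $n$; thus $\overline\Bun_B=\bigsqcup_{n\ge 0}Z_n$, with $Z_0=\Bun_B$ the open stratum. (This is the structure of Drinfeld's compactification for $G=SL(2)$, parallel to \propref{p:smoothness} and to \cite[Cor.~2.10]{Lau1}.) By construction $\overline\sfp\circ\Phi=\sfp\circ\on{pr}_{\Bun_B}$ and $\overline\sfq\circ\Phi=\on{act}\circ(\id\times\sfq)$, where $\on{act}(D,\cM')=\cM'(-D)$.

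Next I would set up the base change that does the bookkeeping. The square with corners $\Sym X\times\Bun_B$, $\Sym X\times\Bun_T$, $\Bun_B$, $\Bun_T$, whose downward arrows are the projections $\on{pr}_{\Bun_B}$ and $\on{pr}$ and whose rightward arrows are $\id\times\sfq$ and $\sfq$, is Cartesian. Hence $\sfq^!\circ\on{pr}_*=(\on{pr}_{\Bun_B})_*\circ(\id\times\sfq)^!$ by proper base change (legitimate because each $\Sym^n X$ is proper, and on $\sD(\Bun_T)_+$ only finitely many $n$ contribute over any quasi-compact open of $\Bun_T$), and dually $\sfq^*\circ\on{pr}_!=(\on{pr}_{\Bun_B})_!\circ(\id\times\sfq)^*$.

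Finally I would assemble the two statements. For (i), d\'evissage along the stratification gives, in $\K(\overline\Bun_B)$, the identity $[\overline\sfq^!\cG]=\sum_n[(\jmath_n)_*\jmath_n^!\overline\sfq^!\cG]$, where $\jmath_n:Z_n\hookrightarrow\overline\Bun_B$; this $!$-form of d\'evissage follows from the usual one $[\cF]=\sum_n[(\jmath_n)_!\jmath_n^*\cF]$ by applying $\bD$. Since $\overline\sfp$ is proper on each $\overline\sfq^{-1}(\Bun_T^{\ge a})$ and $\Phi$ restricts to an isomorphism onto each $Z_n$, applying $\overline\sfp_*$ and using $\jmath_n^!\overline\sfq^!=(\overline\sfq\jmath_n)^!$ turns the $n$-th term into $(\sfp\circ\on{pr}_{\Bun_B})_*(\on{act}\circ(\id\times\sfq))^!\cG$ on $\Sym^n X\times\Bun_B$; the base change above rewrites the sum as $\sfp_*\sfq^!\on{pr}_*\on{act}^!\cG=\Eis_*\circ\on{pr}_*\circ\on{act}^!(\cG)$. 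For (ii) the argument is identical with every functor replaced by its $!/*$ companion: the ordinary d\'evissage $[\overline\sfq^*\cG]=\sum_n[(\jmath_n)_!\jmath_n^*\overline\sfq^*\cG]$ together with $\overline\sfp_!$ and the dual base change yields $\sfp_!\sfq^*\on{pr}_!\on{act}^*\cG=\Eis_!\circ\on{pr}_!\circ\on{act}^*(\cG)$. I expect the only real obstacle to be formal rather than substantive: one must match the four functors correctly through $\Phi$, which is bijective but neither proper nor an isomorphism, so that the two computations can be equated only \emph{in the Grothendieck group}. This is exactly what the two mutually $\bD$-dual d\'evissage formulas provide, and it is the reason the proposition is phrased at the level of $\K$ rather than of sheaves.
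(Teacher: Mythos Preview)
Your argument is correct and follows essentially the same route as the paper: stratify $\overline\Bun_B$ by the locally closed immersions $i_n:\Sym^nX\times\Bun_B\hookrightarrow\overline\Bun_B$, use the $\K$-theoretic d\'evissage $\cF=\sum_n(i_n)_*i_n^!\cF$ (resp.\ its $!/*$ dual), and then apply base change to the Cartesian square with $\sfq$ and $\on{pr}$. The paper is terser---it just says ``apply base change to the expression $\sfq^!\circ\on{pr}_*$''---but your explicit Cartesian square and compatibilities $\overline\sfp\circ\Phi=\sfp\circ\on{pr}_{\Bun_B}$, $\overline\sfq\circ\Phi=\on{act}\circ(\id\times\sfq)$ are exactly what underlies that step.
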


\begin{rem} \label{r:slight}
$\Sym^n X$ is proper for each $n$, so $\on{pr}_!=\on{pr}_*\,$. On the other hand, the morphism \eqref{e:action map} is smooth, so 
$\on{act}^*$ only slightly differs from $\on{act}^!\,$; more precisely, for any $\cF\in\sD (\Bun_T)_+$ the restrictions of $\on{act}^*(\cF )$ and 
$\on{act}^!(\cF )[-2n](-n)$ to $\Sym^nX\times\Bun_T$ are canonically isomorphic.
\end{rem}

\begin{proof}[Proof of \propref{p:K-theoretic Eisensteins}]
The proof given below is straightforward because statement~(i) involves only `right' functors and statement (ii) only `left' ones.

First, let us recall the standard stratification of $\overline{\Bun}_B\,$. If $\cL$ is a rank 2 vector bundle on $X$ with trivialized determinant, $\cM\subset\cL$ is a line sub-bundle, and $D\subset X$ is an effective divisor of degree $n$ then the pair $(\cL,\cM (-D))$ defines an $\bF_q$-point of $\overline{\Bun}_B\,$. This construction works for $S$-points instead of $\bF_q$-points. It defines a locally closed immersion 
\[
i_n:\Sym^nX\times\Bun_B\mono\overline{\Bun}_B\, .
\]
The substacks $i_n(\Sym^nX\times\Bun_B)$ form a stratification of $\overline{\Bun}_B\,$.

Now let us prove (i). We have to check the equality 
\begin{equation}  \label{e:prove this}
\overline\sfp_*\circ\overline\sfq^!=\sfp_*\circ\sfq^!\circ\on{pr}_*\circ\on{act}^!\, ,
\end{equation}
in which both sides are maps $\K (\Bun_T)_+\to\K (\Bun_G)$. For any $\cF\in\K (\overline{\Bun}_B )$ one has
\[
\cF=\sum_{n=0}^{\infty}(i_n)_*\circ i_n^! (\cF)
\]
(the sum converges in the topology of $\K (\overline{\Bun}_B )$ defined in Subsect.~\ref{sss:K-theory}). So
\begin{equation}  \label{e:by stratification}
\overline\sfp_*\circ\overline\sfq^!=\sum_{n=0}^{\infty}(\overline\sfp\circ i_n)_*\circ (\overline\sfq\circ i_n)^!\, , 
\end{equation}
To see that the right hand sides of \eqref{e:prove this} and \eqref{e:by stratification} are equal, it suffices to apply base change to the expression $\sfq^!\circ\on{pr}_*$ from the r.h.s. of \eqref{e:prove this}.

We have proved (i). Statement (ii) can be either proved similarly or deduced from (i) by Verdier duality.
\end{proof}

\subsection{Passing from sheaves to functions}
 Recall that we think of $\cC^K$ as the space of $\ql$-valued functions on $\Bun_T(\bF_q)=\Bun_{\bG_m}(\bF_q)$ (see Example~\ref{ex:functional}). 
 
\begin{lem}    \label{l:pract}
As before, let $\on{pr}:\Sym X\times\Bun_T\to\Bun_T$ denote the projection and $\on{act}:\Sym X\times\Bun_T\to\Bun_T$ the morphism $(D,\cM)\mapsto\cM (-D)$.

(i) One has commutative diagrams

\begin{equation}   \label{e:diagram}
\CD
\K (\Bun_T)_+  @>{\on{pr}_*\circ\on{act}^!}>>\K (\Bun_T)_+ @.{\phantom{aaa}}
\K (\Bun_T)_+  @>{\on{pr}_!\circ\on{act}^*}>>\K (\Bun_T)_+ \\
@VVV    @VVV  @VVV    @VVV \\
\Cp^K  @>{\U_0}>>\Cp^K @. \Cp^K  @>{\U_{-1}}>>\Cp^K
\endCD
\end{equation}
in which each vertical arrow is the map $\cF\mapsto f_{\cF}$ and the operator $\U_n: \Cp^K\to \Cp^K $ is defined by 
\begin{equation}  \label{e:B2}
(\U_n \varphi )(\cM)=\sum_{D\ge 0} q^{n\cdot\deg D}\varphi (\cM (-D)), \quad \varphi\in\Cp^K
\end{equation}
(summation over all effective divisors on $X$).

(ii) All horizontal arrows in diagrams \eqref{e:diagram} are invertible.
\end{lem}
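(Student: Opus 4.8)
The plan is to prove (i) by translating each geometric operator through the nonstandard functions--sheaves dictionary of Subsect.~\ref{sss:2Functions-sheaves} (under which $g^!$ becomes the honest pullback of functions and $g_*$ becomes the fibrewise sum), and then to prove (ii) by exhibiting explicit inverses on each of the two rows separately, since the vertical arrows $\cF\mapsto f_\cF$ are far from injective and cannot be used to transport invertibility between the rows.

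First I would record the two basic translation rules. Because $f_\cF$ is by definition the Grothendieck trace function of the Verdier dual $\bD\cF$, and $\bD$ interchanges the adjoint pairs $(g_*,g_!)$ and $(g^!,g^*)$, the functor $\on{act}^!$ corresponds to the pullback $(D,\cM)\mapsto f_\cF(\cM(-D))$, while $\on{pr}_*$ corresponds to summation over the fibres of $\on{pr}$. The fibre of $\on{pr}:\Sym X\times\Bun_T\to\Bun_T$ over $\cM$ is the \emph{scheme} $\Sym X$, whose $\bF_q$-points are the effective divisors and whose relative automorphism groups over $\cM$ are trivial (the automorphisms coming from $\Bun_T$ cancel between source and target of $\on{pr}$); hence this summation is the unweighted sum $\sum_{D\ge0}$. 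Combining the two rules gives
\[
f_{\on{pr}_*\on{act}^!\cF}(\cM)=\sum_{D\ge0}f_\cF(\cM(-D))=(\U_0 f_\cF)(\cM),
\]
which is the left diagram; the sum is finite because $f_\cF\in\Cp^K$ vanishes for $\deg\cM\ll0$, so that only divisors with $\deg D\le\deg\cM-a$ contribute. For the right diagram I would invoke \remref{r:slight}: properness of $\Sym^nX$ gives $\on{pr}_!=\on{pr}_*$, so $\on{pr}_!$ is again the fibrewise sum, while $\on{act}^*$ differs from $\on{act}^!$ by $[-2n](-n)$ on the component $\Sym^nX\times\Bun_T$. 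By \remref{r:Tate twist} this twist multiplies the associated function by $q^{-n}$, so each divisor $D$ of degree $n$ now enters with weight $q^{-n}$, yielding $\sum_{D\ge0}q^{-\deg D}f_\cF(\cM(-D))=(\U_{-1}f_\cF)(\cM)$. This proves (i).

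For (ii) I would treat the two rows independently. On the function side, $\U_0$ and $\U_{-1}$ lie in the algebra $A$ of Subsect.~\ref{sss:A}, which by \remref{r:A for funct fields} is local, an element being invertible as soon as its $\delta_0$-coefficient is a unit; since $\U_n=\delta_0+(\text{terms supported on }\Div_+(X)\setminus\{0\})$ and, by \eqref{e:Upsilon}, $\U_n^{-1}=\prod_{x\in|X|}(1-q_x^n\delta_x)$ is a genuine element of $A$, both $\U_0$ and $\U_{-1}$ are invertible. On the sheaf side I would observe that, decomposed along $\Sym X=\coprod_n\Sym^nX$, each of $\on{pr}_*\on{act}^!$ and $\on{pr}_!\on{act}^*$ is a sum of operators $N_n$ raising the lower support bound on $\Bun_T$ by $n$, with $N_0=\mathrm{Id}$ (because $\Sym^0X$ is a point on which $\on{act}$ and $\on{pr}$ restrict to the identity, the Tate twist being trivial for $n=0$). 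Thus each top arrow has the shape $\mathrm{Id}+N$ with $N=\sum_{n\ge1}N_n$ topologically nilpotent in the projective-limit topology of Subsect.~\ref{sss:K-theory}, so the Neumann series $\sum_{k\ge0}(-N)^k$ converges, defines a two-sided inverse, and preserves $\K(\Bun_T)_+$.

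The main obstacle I anticipate is the careful bookkeeping of the nonstandard (Verdier-dual) dictionary in the second diagram: one must correctly match the `left' functor $\on{act}^*$ to its effect on functions through the Tate-twist identity of \remref{r:slight}, and confirm that the $\Bun_T$-automorphisms cancel across $\on{pr}$ so that the fibrewise pushforward is genuinely the unweighted sum over effective divisors. Once these two points are pinned down, both the translation in (i) and the $\mathrm{Id}+(\text{degree-raising})$ structure used for (ii) follow mechanically.
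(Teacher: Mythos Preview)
Your proof is correct and follows essentially the same approach as the paper. The paper's own proof is very terse: for (i) it simply says ``clear'' and refers to \remref{r:slight} for the second diagram (exactly as you do), and for (ii) it argues directly that $\Cp^K$ is complete for the filtration by the subspaces $\C^K_{\ge\cN}$, that $\U_n$ preserves this filtration and acts as the identity on successive quotients, hence is invertible, with the upper arrows handled ``similarly''. Your argument for the lower row via invertibility in the local ring $A$ of \remref{r:A for funct fields}, and for the upper row via the Neumann series in the projective-limit topology of Subsect.~\ref{sss:K-theory}, is the same filtration/topological-nilpotence idea expressed in slightly different language.
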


\begin{proof}
Statement (i) is clear (in the case of $\on{pr}_!\circ\on{act}^*$ use \remref{r:slight}). 

Let us prove that the operator $\U_n:\Cp^K\to\Cp^K$ is invertible (invertibility of the upper horizontal arrows is proved similarly). The space  $\Cp^K$ is complete with respect to the filtration formed by the subspaces $\C^K_{\ge\cN}\subset\Cp^K$, $\cN\in\bZ$. The operator $\U_n$ is compatible with the filtration and acts as identity on the successive quotients. So $\U_n$ is invertible.
\end{proof}

Recall that one has the operator $\Eis^K:\Cp^K\to\cA^K$.

\begin{cor}    \label{c:relation between Eis's}
For any $\cF\in\K (\Bun_T)_+$ one has 
\begin{equation}  \label{e:B3}
f_{\Eis_*\cF}=\Eis^K(f_\cF), \quad \cF\in\K (\Bun_T)_+ \; ,
\end{equation}
\begin{equation}  \label{e:B4}
f_{\overline\Eis\,\cF}=(\Eis^K\circ\U_0) (f_\cF), \quad \cF\in\K (\Bun_T)_+ \; ,
\end{equation}
\begin{equation}  \label{e:B5}
f_{\underline\Eis\, \cF}=(\Eis^K\circ\U_0\circ \W )(f_\cF), \quad \cF\in\K (\Bun_T)_+ \; ,
\end{equation}
\begin{equation}  \label{e:B6}
f_{\Eis_!\cF}=(\Eis^K\circ \U_0\circ\U_1^{-1}\circ \W)(f_\cF), \quad \cF\in\K (\Bun_T)_+ \; ,
\end{equation}
where $\U_0\,  ,\U_1:\Cp ^K\to\Cp ^K$ are defined by formula\, \eqref{e:B2} and  
$\W:\Cp ^K\to\Cp ^K$ is the operator of multiplication by the function 
\begin{equation}   \label{e:Euler}
\cM\mapsto q^{2\deg\cM+1-g_X}, \quad \cM\in\Bun_T (\bF_q).
\end{equation}
\end{cor}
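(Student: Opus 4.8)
The plan is to prove the four identities \eqref{e:B3}--\eqref{e:B6} in sequence, each reducing to the previous one plus a single extra input. The organizing principle is that $\Eis_*$ and $\overline{\Eis}$ are assembled from `right' functors, so the function--sheaf dictionary of Subsect.~\ref{sss:2Functions-sheaves} applies to them verbatim, whereas $\Eis_!$ and $\underline{\Eis}$ involve `left' functors and so must be reached indirectly. Formula \eqref{e:B3} is immediate: $\Eis_*=\sfp_*\circ\sfq^!$ uses only `right' functors, hence $f_{\Eis_*\cF}=\Eis^K(f_\cF)$. For \eqref{e:B4} I would feed \propref{p:K-theoretic Eisensteins}(i), which writes $\overline{\Eis}=\Eis_*\circ\on{pr}_*\circ\on{act}^!$ on $\K(\Bun_T)_+$, into \eqref{e:B3}, and then use the left commuting square of \lemref{l:pract}(i) identifying the function-level avatar of $\on{pr}_*\circ\on{act}^!$ with $\U_0$; this gives $f_{\overline{\Eis}\,\cF}=(\Eis^K\circ\U_0)(f_\cF)$.

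For \eqref{e:B5} I would invoke \corref{c:Tate-twist}, i.e.\ $\underline{\Eis}(\cF)=\overline{\Eis}(\cF[2m](m))$ with $m(\cM)=2\deg\cM+1-g_X$. By \remref{r:Tate twist} the operation $[2](1)$ multiplies the associated function by $q$, so $[2m](m)$ multiplies it componentwise by $q^{2\deg\cM+1-g_X}$, which is precisely the operator $\W$; thus $f_{\cF[2m](m)}=\W f_\cF$. Applying \eqref{e:B4} to the argument $\cF[2m](m)\in\K(\Bun_T)_+$ then yields $f_{\underline{\Eis}\,\cF}=(\Eis^K\circ\U_0)(f_{\cF[2m](m)})=(\Eis^K\circ\U_0\circ\W)(f_\cF)$.

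The last formula \eqref{e:B6} is the crux, since $\Eis_!$ is invisible to the dictionary. Here I would use \propref{p:K-theoretic Eisensteins}(ii), $\underline{\Eis}=\Eis_!\circ\on{pr}_!\circ\on{act}^*$, together with \lemref{l:pract}: the function-level avatar of $\on{pr}_!\circ\on{act}^*$ is $\U_{-1}$, and by part (ii) \emph{both} the sheaf-level operator $\on{pr}_!\circ\on{act}^*$ and the function-level $\U_{-1}$ are invertible. Writing an arbitrary $\cG\in\K(\Bun_T)_+$ as $\cG=(\on{pr}_!\circ\on{act}^*)(\cF)$ for a unique $\cF$, I obtain $f_{\Eis_!\cG}=f_{\underline{\Eis}\,\cF}=(\Eis^K\circ\U_0\circ\W)(f_\cF)$ from \eqref{e:B5}, while $f_\cG=\U_{-1}(f_\cF)$, so $f_\cF=\U_{-1}^{-1}(f_\cG)$ and hence $f_{\Eis_!\cG}=(\Eis^K\circ\U_0\circ\W\circ\U_{-1}^{-1})(f_\cG)$. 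The only genuine computation left is the commutation $\U_1\circ\W=\W\circ\U_{-1}$, which follows from the exponent identity $q^{\deg D}\cdot q^{2(\deg\cM-\deg D)+1-g_X}=q^{2\deg\cM+1-g_X}\cdot q^{-\deg D}$; this rewrites $\W\circ\U_{-1}^{-1}=\U_1^{-1}\circ\W$ and converts the expression into $(\Eis^K\circ\U_0\circ\U_1^{-1}\circ\W)(f_\cG)$, which is \eqref{e:B6}. I expect the main obstacle to be conceptual rather than computational, namely arranging the argument so that $\Eis_!$ is extracted from the accessible `right'-functor data through \propref{p:K-theoretic Eisensteins}(ii) and the invertibility in \lemref{l:pract}(ii); once this is set up, the Tate-twist bookkeeping and the one-line commutation relation complete the proof.
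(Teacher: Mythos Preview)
Your proof is correct and follows essentially the same route as the paper. The paper compresses the argument by skipping the explicit derivations of \eqref{e:B4}--\eqref{e:B5} (declaring them ``similar but easier'') and instead assembles \eqref{e:B6} directly at the $\K$-group level as $\Eis_! = \Eis_*\circ\widetilde{\U}_0\circ\widetilde{\W}\circ\widetilde{\U}_{-1}^{-1}$ before passing to functions; but the ingredients (\propref{p:K-theoretic Eisensteins}, \corref{c:Tate-twist}, \lemref{l:pract}, \remref{r:Tate twist}, and the commutation $\W^{-1}\circ\U_1\circ\W=\U_{-1}$) and their roles are identical to yours.
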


\begin{proof}
Formula~\eqref{e:B3} is clear because the definition of $\Eis_*$ involves only `right' functors. Let us prove \eqref{e:B6} (the proof of \eqref{e:B4}-\eqref{e:B5} is similar but easier).

By \propref{p:K-theoretic Eisensteins} and  \corref{c:Tate-twist}, the map $\Eis_!:\K (\Bun_T)_+\to\K (\Bun_G)$ equals 
$\Eis_*\circ\widetilde \U_0\circ\widetilde\W\circ (\widetilde\U_{-1})^{-1}$, where $\widetilde\U_0:=\on{pr}_*\circ\on{act}^!$, $\widetilde \U_{-1}:=\on{pr}_!\circ\on{act}^*$, and 
$\widetilde\W (\cF):=\cF [2m](m)$ (here $m$ is as in \corref{c:Tate-twist}); note that $\widetilde\U_{-1}$ is invertible by \lemref{l:pract}(ii). 
 \lemref{l:pract}(i) and formula~\eqref{e:B3} imply that
 \[
 f_{\Eis_!\cF}=(\Eis^K\circ \U_0\circ \W\circ(\U_{-1})^{-1})(f_\cF).
 \]
 This is equivalent to \eqref{e:B6} because $\W^{-1}\circ\U_1\circ\W=\U_{-1}\,$.
\end{proof}

\subsection{Proof of Theorem~\ref{t: Eis_! and Eis'}}   \label{ss:at last the proof}
Theorem~\ref{t: Eis_! and Eis'} involves the operator 
\begin{equation}   \label{e:thoperator}
q^{2-2g_X}\cdot (\Eis')^K\circ \iota^*:\Cp^K\to\cA^K\, .
\end{equation} 
By definition,  $ (\Eis')^K=\Eis^K\circ (M^K)^{-1}$. Formulas \eqref{e:needed}-\eqref{e:2needed} tell us that 
$$(M^K)^{-1}\circ\iota^*=q^{2g_X-2}\cdot\U_0\circ\U_1^{-1}\circ \W,$$
where $\W:\Cp^K\to\Cp^K$ is the operator of multiplication by the function \eqref{e:Euler}.
So the operator \eqref{e:thoperator} is equal to the operator $\Eis^K\circ \U_0\circ\U_1^{-1}\circ \W$, which appears in formula~\eqref{e:B6}. \qed

\subsection{Concluding remarks}   \label{ss:why barbaric}
The above proof of Theorem~\ref{t: Eis_! and Eis'} is self-contained. On the other hand, it is barbaric for the following reasons.

\medskip

(i) We heavily used smoothness of the morphism $\overline\sfq :\overline\Bun_B\to\Bun_T\,$, which is a specific feature of the case $G=SL(2)$.
If $G$ is an arbitrary reductive group then instead of $\overline{\Eis}$ and $\underline{\Eis}$ one should work with the functor 
$\Eis_{!*}:\sD (\Bun_T)_+\to\sD (\Bun_G)$ introduced by A.~Braverman and D.~Gaitsgory \cite[Subsect.~2.1]{BG} (they denote it simply by $\Eis$; the notation $\Eis_{!*}$ is taken from \cite{G1}). In the case $G=SL(2)$ the functor $\Eis_{!*}$ is the `geometric mean' of our functors $\overline{\Eis}$ and $\underline{\Eis}$.

\smallskip

(ii) Our \propref{p:K-theoretic Eisensteins} is a statement at the level of $\K$-groups (and so is the more general Corollary~4.5 from \cite{BG2}). However, as explained to me by D.~Gaistgory, there is a way to relate the functors $\Eis_{!*}\,$, $\Eis_{!}\,$, and $\Eis_{*}$ themselves
(not merely the corresponding homomorphisms of $\K$-groups). His formulation of the relation involves the factorization algebras $\Upsilon$ and 
$\Omega$ introduced in \cite[Subsects.~3.1 and 3.5]{BG2}. One can think of these algebras as geometrizations of the operators $\U_0^{-1}$ and $\U_1^{-1}$, where $\U_n$ is defined by~\eqref{e:B2}. To make the analogy more precise, one should think of $\U_n$ not as an operator but as an element of the algebra $A$ from Subsect.~\ref{sss:algebra A}. The fact that $\Upsilon$ and $\Omega$ are \emph{factorization} algebras is related to the \emph{Euler product} expression for $\U_n$ in formula~\eqref{e:Upsilon}.

\section{A conjectural D-module analog of formula~\eqref{e:Ainverse}.} \label{s:conjectural}

Recall that according to \corref{c:A}, in the case of function fields the operator $L:\cA_c\to\cA_{ps-c}$ is invertible and its inverse is given by formula~\eqref{e:Ainverse}. In Subsections~\ref{ss:psid}-\ref{ss:miraculous} we defined a functor 
$$(\psId)_{\Bun_G,!}: \Dmod (\Bun_G)_c\to\Dmod (\Bun_G)_{ps-c}\, ;$$ 
as explained in Subsect.~\ref{sss:analogs of BK&LK}, this functor is a D-module analog of the operator 
\[
q^{-d}L^K:\cA_c^K\to\cA_{ps-c}^K\; , \quad d:=\dim\Bun_G\, .
\]
 As explained in  Subsection~\ref{ss:miraculous}, the main theorem of \cite{G1} implies that this functor is invertible. Conjecture~\ref{conj:conjectural} below gives a description of the inverse functor, which is inspired by formula~\eqref{e:Ainverse}. Before formulating the conjecture, we have to define
a certain endofunctor of $\Dmod (\Bun_G)$, which can be considered as a D-module analog of the operator $1-\Eis\circ\CT$ from the r.h.s of formula~\eqref{e:Ainverse}.

\subsection{An endofunctor of $\Dmod (\Bun_G)$}  \label{ss:enh}
Conjecture~\ref{conj:conjectural} involves the DG category $\I (G,B)$ defined in \cite[Sect.~6]{G2} and the adjoint pair of functors
$$\Eis_B^{\enh}:\I (G,B)\to\Dmod (\Bun_G), \quad \CT_B^{\enh}:\Dmod (\Bun_G)\to\I (G,B)$$ defined in \cite[Subsect.~6.3]{G2}; here `enh' stands for `enhanced'. The ideas behind these definitions are explained in  \cite[Subsect.~1.4]{G2}. More details regarding $\I (G,B)$, $\Eis_B^{\enh}$, and $\CT_B^{\enh}$ are contained in \cite[Subsects. 7.1, 7.3.5, 8.2.4]{Sn}.

 One can think of  $\I (G,B)$ as a `refined version' of the DG category $\Dmod (\Bun_T )$. More precisely, the DG category $\I (G,B)$ has a filtration indexed by integers whose associated graded equals $\Dmod (\Bun_T )$ (the grading on $\Dmod (\Bun_T )$ comes from the degree map 
 $\Bun_T\to\bZ$).\footnote{One can think of $\I (G,B)$ as the DG category of D-modules on a certian `stack' (in a generalized sense) equipped with a stratification whose strata are the stacks $\Bun_T^n\,$, $n\in\bZ$. This philosophy is explained in \cite{G2} (see \cite[Sect.~1.4]{G2}, which refers to \cite[Sect.~1.3.1]{G2}, which refers to \cite[Sect.~5]{G2}).} Both $\I (G,B)$ and $\Dmod (\Bun_T )$ are D-module analogs\footnote{This is not surprising in view of the previous footnote.} (in the sense of Subsect.~\ref{sss:Dmods&functions}) of the vector space $\C^K$. 
 
According to \cite[Subsect.~8.2.4]{Sn}, the functor  $\Eis_B^{\enh}:\I (G,B)\to\Dmod (\Bun_G)$ is left adjoint to $\CT_B^{\enh}:\Dmod (\Bun_G)\to\I (G,B)$. 
Let $\epsilon :\Eis_B^{\enh}\circ\CT_B^{\enh}\to \Id_{\Dmod (\Bun_G)}$ denote the counit of the adjunction. We need its cone, which is a functor
 \begin{equation}   \label{e:cone-eps}
 \Cone (\epsilon ) :\Dmod (\Bun_G )\to\Dmod (\Bun_G ).
 \end{equation}

\begin{rem}   \label{r:thinking}
We think of $\Eis_B^{\enh}$ and $\CT_B^{\enh}$ as D-module analogs of the operators $\Eis^K$ and $\CT^K$. We think of  $\Cone (\epsilon )$ as a D-module analog of the operator $1-\Eis^K\circ\CT^K$.
\end{rem}

 \subsection{The conjecture}
 Consider the composition
 \begin{equation}   \label{e:composed_funct}
\Dmod (\Bun_G)_{ps-c}\to\Dmod (\Bun_G)_c\mono\Dmod (\Bun_G)\, ,
 \end{equation}
 where the first functor is $((\psId)_{\Bun_G,!})^{-1}:\Dmod (\Bun_G)_{ps-c}\to\Dmod (\Bun_G)_c\,$. The following conjecture expresses this composition in terms of the functor \eqref{e:cone-eps}.
\begin{conj}   \label{conj:conjectural}
The composition \eqref{e:composed_funct} is  isomorphic to the restriction of the functor $\Cone (\epsilon )[2d]$ to $\Dmod (\Bun_G)_{ps-c}\,$, where
$d:=\dim\Bun_G\,$.
 \end{conj}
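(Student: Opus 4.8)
The plan is to categorify the proof of \propref{p:A}(ii), which established the elementary inversion formula \eqref{e:Ainverse}, namely $L^{-1}=1-\Eis\circ\CT$ on $\cA_{ps-c}\,$. First I would reformulate the conjecture. By the main theorem of \cite{G1} recalled in Subsect.~\ref{ss:miraculous}, the functor $(\psId)_{\Bun_G,!}\colon\Dmod(\Bun_G)_c\to\Dmod(\Bun_G)_{ps-c}$ is an equivalence; hence the conjecture is equivalent to a functor isomorphism $\Cone(\epsilon)[2d]\circ(\psId)_{\Bun_G,!}\cong\iota_c\,$, where $\iota_c\colon\Dmod(\Bun_G)_c\hookrightarrow\Dmod(\Bun_G)$ is the inclusion. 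Because $(\psId)_{\Bun_G,!}$ is already known to be an equivalence, it suffices to produce this one isomorphism together with the fact that $\Cone(\epsilon)[2d]$ carries $\Dmod(\Bun_G)_{ps-c}$ into $\Dmod(\Bun_G)_c\,$; the latter is the D-module analog of the assertion $L'g\in\cA_\circ$ in the proof of \propref{p:A}(ii), and should follow from the defining property \eqref{e:property of paid} of $(\psId)$ together with a boundedness-of-support argument on $\Bun_B\,$.

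The computation itself replicates the three ingredients of the function-level argument, where one wrote $\CT(Lf)=-M^{-1}\CT(f)$ (formula \eqref{e:1}), used $\CT\circ\Eis=1+M$, and concluded $L'Lf=Lf+\Eis\circ M^{-1}\circ\CT(f)=f$. The categorical analogs I would establish are: (a) a distinguished triangle categorifying $\CT\circ\Eis=1+M$, relating $\CT_B^{\enh}\circ\Eis_B^{\enh}$, the identity, and a geometric standard intertwiner $\mathsf M$ (the functional-equation functor), extracted from the $\I(G,B)$-formalism of \cite{G2,Sn}; (b) the enhanced analog of \eqref{e:1}, computing $\CT_B^{\enh}\circ(\psId)_{\Bun_G,!}$ in terms of $(\psId)_{\Bun_T,!}$ and $\mathsf M$, obtained by combining the identification $(\psId)_{\Bun_T,!}\cong\Id[1-2g_X]$ (Subsect.~\ref{sss:Dsetting}) with the enhanced lift of Theorem~4.1.2 of \cite{G1} — which in the form used in Subsect.~\ref{sss:Dennis-iso} relates $\Eis_!\circ(\psId)_{\Bun_T,!}$ and $(\psId)_{\Bun_G,!}\circ\Eis_*\circ\iota^*$ — transported to the $\CT$ side by adjunction; and (c) invertibility of $\mathsf M$, the geometric counterpart of \propref{p:invertibility}.

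With (a)--(c) in hand I would set $N:=(\psId)_{\Bun_G,!}(M)$ for $M\in\Dmod(\Bun_G)_c$ and run the cone computation. Ingredient (b) identifies $\CT_B^{\enh}(N)$; applying $\Eis_B^{\enh}$ and feeding in the triangle (a) presents $\Eis_B^{\enh}\CT_B^{\enh}(N)$, and the counit triangle $\Eis_B^{\enh}\CT_B^{\enh}(N)\xrightarrow{\ \epsilon_N\ }N\to\Cone(\epsilon)(N)$ should then identify $\Cone(\epsilon)(N)$ with $M[-2d]$, exactly as $L'Lf=f$ emerges at the function level. The normalization shift $[2d]$ and its matching with $(\psId)_{\Bun_G,!}\sim q^{-d}L^K$ is bookkept through the Tate-twist dictionary of \remref{r:Tate twist} (each factor of $q$ corresponding to $[2](1)$, with twists discarded in the D-module setting). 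Finally, since $(\psId)_{\Bun_G,!}$ is an equivalence, this one-sided identity upgrades automatically to the required isomorphism of functors on $\Dmod(\Bun_G)_{ps-c}\,$.

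The main obstacle will be ingredient (b): promoting the compatibility of $(\psId)$ with Eisenstein and constant-term functors from the \emph{plain} functors $\Eis_*,\Eis_!,\CT_*,\CT_!$ — where Theorem~4.1.2 of \cite{G1} and the $\K$-theoretic identities of Appendix~\ref{s: Eis_! and Eis'} live — to the \emph{enhanced} functors $\Eis_B^{\enh},\CT_B^{\enh}$ on the refined category $\I(G,B)$, and doing so at the level of functors (with the counit $\epsilon$ and its cone) rather than merely at the level of Grothendieck groups or trace functions. This requires a genuine use of the filtration on $\I(G,B)$ whose associated graded is $\Dmod(\Bun_T)$ (Subsect.~\ref{ss:enh}), in order to realize $\Cone(\epsilon)$ as the categorical avatar of $1-\Eis\circ\CT$ and to control $\mathsf M$ and its inverse functorially — the geometric incarnation of the role played by $M^{-1}$ throughout this article.
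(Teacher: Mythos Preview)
The statement you are attempting to prove is labeled a \emph{Conjecture} in the paper (Conjecture~\ref{conj:conjectural}), and the paper does not supply a proof. The surrounding remarks only give heuristic evidence: consistency with formula~\eqref{e:Ainverse} under the functions--D-modules dictionary, and compatibility with the canonical morphism of \remref{r:psid to id}. So there is no ``paper's own proof'' to compare against; your proposal should be read as an attack strategy on an open statement, not as a reconstruction of an existing argument.

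As such a strategy, your outline is coherent and correctly identifies the structure one would expect: categorify the two-line computation of \propref{p:A}(ii) by replacing $\CT\circ\Eis=1+M$ with a distinguished triangle in $\I(G,B)$, replacing \eqref{e:1} with a compatibility of $(\psId)$ with the enhanced constant term, and replacing invertibility of $M$ with invertibility of a geometric intertwiner $\mathsf M$. You are also right that the serious gap is ingredient~(b). The inputs you cite --- \cite[Theorem~4.1.2]{G1}, the identities of Appendix~\ref{s: Eis_! and Eis'}, and the computation of $(\psId)_{\Bun_T,!}$ in Subsect.~\ref{sss:Dsetting} --- all live at the level of the \emph{plain} functors $\Eis_*,\Eis_!,\CT_*,\CT_!$ on $\Dmod(\Bun_T)$, or even just at the level of $\K$-groups. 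Lifting them to the enhanced category $\I(G,B)$, and doing so compatibly with the counit $\epsilon$ so that the cone computation goes through as a functor isomorphism (not merely on associated graded pieces of the filtration on $\I(G,B)$), is exactly the content the paper does not claim to have. In short: your plan is the natural one, you have located the real difficulty yourself, and until that difficulty is resolved the statement remains a conjecture.
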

 
 \begin{rem}
 One can prove that the functor \eqref{e:cone-eps} indeed maps the subcategory $\Dmod (\Bun_G)_{ps-c}\subset\Dmod (\Bun_G)$ to $\Dmod (\Bun_G)_c$ (as would follow from the conjecture).
  \end{rem}

 \begin{rem}
 Let us compare the above conjecture with formula \eqref{e:Ainverse}. The functor $(\psId)_{\Bun_G,!}$ is a D-module analog of the operator 
 $q^{-d}L^K$. Formula \eqref{e:Ainverse} tells us that $(q^{-d}L^K)^{-1}=q^d (1-\Eis^K\circ\CT^K)$. This agrees with 
 Conjecture~\ref{conj:conjectural} by Remarks~\ref{r:thinking} and \ref{r:Tate twist}.
  \end{rem}
  
 \begin{rem} 
 Conjecture~\ref{conj:conjectural} implies that for any $N\in\Dmod(\Bun_G )_{ps-c}$ one has a canonical morphism 
 $N[2d] \to ((\psId)_{\Bun_G,!})^{-1}(N)$. This agrees with \remref{r:psid to id}, which says that for any $M\in\Dmod(\Bun_G )_c$ one has a canonical morphism $$(\psId)_{\Bun_G,!}(M)\to M[-2d].$$
\end{rem}

\bibliographystyle{alpha}

\end{document}